\newtheorem*{theorem*}{\textnormal{\textbf{Theorem}}}
\newtheorem*{assumption*}{\textnormal{\textbf{Assumption}}}
\date{\today}
\author{Val\'erie Berth\'e}
\address{Université de Paris, CNRS, IRIF,  Paris, France}
\email{berthe@irif.fr}
\author{Stephen Cantrell}
\address{Department of Mathematics, 
University of Warwick,
Coventry, CV4 7AL, UK}
\email{stephen.cantrell@warwick.ac.uk}
\author{Jungwon Lee}
\address{Max Planck Institute for Mathematics, Vivatsgasse 7, 53111 Bonn, Germany}
\email{jungwon@mpim-bonn.mpg.de}
\author{Mark Pollicott}
\address{Department of Mathematics, 
University of Warwick,
Coventry, CV4 7AL, UK}
\email{masdbl@warwick.ac.uk}
\title[Multidimensional statistics for generalised continued fractions]{Multidimensional statistics for finite orbits of generalised continued fractions}
\begin{document}
\maketitle

\begin{abstract}
We statistically compare the relationships between frequencies of digits in continued fraction expansions of typical rational points in the unit interval and higher dimensional generalisations. This takes the form of a Large Deviation and Central Limit Theorem, including multidimensional  results for random vectors. These results apply to classical  multidimensional  continued fraction transformations including  Brun's algorithm  and the Jacobi--Perron algorithm,  and more generally for maps    satisfying  mild contraction hypothesis on the inverse branches. We prove in  particular
that  the finite trajectories capture the generic ergodic  behaviour of  infinite trajectories. 
\end{abstract}


\section{Introduction}

The study of continued fraction expansions of  real numbers has a long and rich history that famously includes seminal contributions by Gauss.  An important modern aspect of this theory relates to the frequencies of the digits in the expansion, with basic estimates for typical points coming from  ergodic theorems  (See  e.g., \cite{HensleyBook}).  In this article, we describe more subtle statistical results such as Central Limit Theorems (CLT) for finite  trajectories. This allows us to show that the  expansions of  rational  numbers  capture the generic ergodic behaviour of the expansions of real numbers, as  first established in Baladi--Vallée \cite{baladi-vallee}.

More generally, we formulate natural and verifiable conditions  for a map defined on a compact subset of ${\mathbb R}^m$  to exhibit nice statistical behaviour of the random vectors associated to symbolic expansions induced by the map. Before we present the main results in Section \ref{sec.main}, we motivate them by highlighting new consequences with a concrete example. 

For $(q,r,s) \in \mathbb{N}^3$ with $q>r>s$, the Brun map in dimension $m=2$  (See \cite{Brun19,Brun20,BRUN}) yields an algorithm for finding a Greatest Common Divisor (GCD) by dividing the largest entry by the second largest entry:
\[ (q,r,s) \longmapsto (q_1,r_1,s_1) \]
with $q_1,r_1,s_1 \in \{q-\lfloor q/r \rfloor r, r, s\}$ reordered in a descending order.  This map is the integer version of  Brun's multidimensional continued fraction algorithm, 
playing the same role  as  Euclid algorithm for continued fractions, and has been  studied  for   several applications (See  \cite{Ro,Wen2018}). This  algorithm terminates uniquely in finite time when it reaches $(q_n, 0, 0)$ (where $q_n$ is the GCD of  the initial  triple), and gives a sequence of partial quotients $a_i=\lfloor q_{i-1}/r_{i-1} \rfloor \geq 1$ for $1\leq i \leq n$ where $(q_0,r_0,s_0):=(q,r,s)$. For $j \in \mathbb{N}$ let
\[ N_j(q,r,s):= \#\{1 \leq i \leq n: a_i=j \}  \] 
be the number of occurrences of the prescribed partial quotients in the execution. This can be extended to $m>2$ in a canonical way. For Brun's algorithm, our result can be informally stated as follows. 
\begin{theorem} \label{motif:brun}
For $(j_1, \cdots, j_d) \in \N^d$, consider a random vector $\overline{N}=(N_{j_1}, \cdots, N_{j_d}) \in \N^d$. Then, up to suitable normalisations, the values
\[ \{ \overline{N}(t_1,\cdots, t_{m+1}): 1\leq t_{m+1}< \cdots <t_1 \leq Q , (t_1, \cdots ,t_{m+1})=1\} \]
become equidistributed according to a large deviation theorem and a Gaussian limit law as $Q \rightarrow \infty$.
\end{theorem}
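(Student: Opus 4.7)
The plan is to deduce Theorem \ref{motif:brun} from the general statistical framework to be developed in Section \ref{sec.main}, by showing that Brun's algorithm fits the hypothesis of that framework. The integer Brun map is the restriction to rational points of a continuous map $T\colon \Delta \to \Delta$ on a standard simplex in $\mathbb{R}^m$, with countably many inverse branches $\psi_a$ indexed by the partial quotients $a \in \mathbb{N}$. Under this identification, finite integer orbits terminating on an axis are in bijection with finite admissible words $(a_1,\ldots,a_n)$: each primitive ordered tuple $(t_1,\ldots,t_{m+1})$ corresponds to one such word, and the count $N_j(\mathbf{t})$ becomes the Birkhoff sum of the cylinder indicator $\mathbf{1}_{[a=j]}$ along that word. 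Consequently the vector $\overline{N}(\mathbf{t})$ is an additive cost on symbolic trajectories, of exactly the kind handled by the general theorems.

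The first technical step is to verify the contraction hypothesis on the inverse branches of the Brun map; this is well documented in the literature and gives quasi-compactness with a spectral gap for the transfer operator
\[ \mathcal{L}_s f(x) = \sum_{a} |\psi_a'(x)|^s\, f(\psi_a(x)) \]
on an appropriate Banach space. Introducing a $d$-parameter weighting $\mathcal{L}_{s,\zeta}$ with $\zeta\in \mathbb{R}^d$ dual to $\overline{N}$, analytic perturbation theory yields a dominant eigenvalue $\lambda(s,\zeta)$ depending analytically on $(s,\zeta)$ near $(1,0)$, with a nondegenerate Hessian at $\zeta=0$ (an irreducibility check on the weights assigned to the prescribed digits $j_1,\ldots,j_d$).

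To transfer this spectral information to the counting problem over primitive tuples with $\max_i t_i \leq Q$, I would, following Baladi--Vallée \cite{baladi-vallee}, study the Dirichlet series
\[ D(s,\zeta) = \sum_{\mathbf{t}} e^{\langle \zeta,\overline{N}(\mathbf{t})\rangle}\, (\max_i t_i)^{-s}, \]
whose rightmost singularity is controlled by the implicit equation $\lambda(s,\zeta)=1$. A Perron/contour-shift argument then produces a quasi-power expression for the joint moment generating function of $\overline{N}$ across the finite-trajectory ensemble. From this, the Gaussian CLT follows by the vector version of Hwang's quasi-power theorem and the large deviation principle follows by Gärtner--Ellis.

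The main obstacle is the analytic continuation of $D(s,\zeta)$ to the left of its abscissa of convergence and the uniform control on vertical strips $\mathrm{Re}(s)=1-\varepsilon$ that is needed for the Tauberian step. This amounts to ruling out extra spectral values of $\mathcal{L}_{s,\zeta}$ with $|\mathrm{Im}\, s|$ large---a Dolgopyat-type estimate---which in the multidimensional Brun setting is delicate because of the geometry of the countable partition of the simplex and the need to handle branches with arbitrarily large distortion. Once this uniform bound is in hand, the remaining verifications (non-arithmeticity of the symbolic cocycle, non-degeneracy of the asymptotic covariance matrix) follow from standard dynamical arguments applied to the specific combinatorics of Brun digits.
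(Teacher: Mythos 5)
Your high-level identification of the Brun GCD orbits with symbolic trajectories of the continuous Brun map, and the plan to attack this via weighted transfer operators with a dominant eigenvalue depending analytically on $(s,\zeta)$, is the correct starting point. The non-arithmeticity and nondegeneracy checks you mention at the end are indeed needed and are carried out in the paper.

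However, the Tauberian step in your proposal contains a genuine gap, and it is precisely the one the paper is designed to avoid. You plan to follow Baladi--Vall\'ee: analytically continue the Dirichlet series $D(s,\zeta)$ across the line $\mathfrak{Re}(s)=1$, obtain polynomial bounds on the vertical strip $\mathfrak{Re}(s)=1-\varepsilon$, shift the Perron contour, and extract a quasi-power estimate for the moment generating function. You correctly observe that this requires a Dolgopyat-type estimate (ruling out spectral values of $\mathcal{L}_{s,\zeta}$ with $|\mathfrak{Im}\,s|$ large) and flag it as ``the main obstacle''. The trouble is that no such estimate is known for the Brun transfer operator --- the paper explicitly notes that for Brun and Jacobi--Perron there were no prior distributional results for finite orbits precisely because this and the associated UNI-type hypotheses have not been verified. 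So as written, your argument invokes an unproven ingredient and cannot be completed.

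The paper circumvents this entirely. Instead of Perron's formula it uses a Delange-type Tauberian theorem (Proposition \ref{prop.tau}), which requires only analyticity of the relevant complex function on the closed half-plane $\mathfrak{Re}(s)\geq 1$ away from a pole at $s=1$, together with a local expansion at $s=1$; there is no contour shift and hence no need for uniform bounds in a left half-strip or on high vertical lines. The aperiodicity of the spectrum on the critical line is exactly what the non-arithmeticity Assumption \ref{assump:nonarith} provides (Lemma \ref{lem.spectrum}). The CLT is then obtained not via Hwang's quasi-power theorem but by the method of moments (Propositions \ref{prop.even}, \ref{prop.odd}, following Morris), tracking the polar order of $\eta_{\w p}(s)$ at $s=1$ and applying the Tauberian theorem to each moment series; the LDP is obtained from a local G\"artner--Ellis lemma (Lemma \ref{lem.localldp}) using only the strict convexity of $t\mapsto\sigma(t)$ near $0$. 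The trade-off is that this softer route gives no convergence rate, whereas a successful Dolgopyat estimate would. To repair your proof, replace the Perron/Hwang step with the Delange/moment-method step, and the argument goes through once you have verified, as in the paper's Section \ref{ex:Brun}, that Brun satisfies Assumptions \ref{assmp:exp} and \ref{assump:nonarith}.
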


The motivation for our general results is to provide an easily applicable theory that can be used to study the multidimensional statistics of finite orbits of generalised continued fraction algorithms in higher dimensions. Under mild hypothesis, we significantly simplify and generalise the transfer operator methods initially formulated in \cite{baladi-vallee} for the Gauss map, where they required a delicate analysis on uniform polynomial decay of operator norms. Verifying that these estimates hold is challenging as it needs precise knowledge of the underlying dynamics, which need to be checked case by case for different maps.  We discuss this point further in Section \ref{sec.overview}.

There are many algorithms, especially in higher dimensions for which a priori assumptions and estimates are difficult to show and have not been obtained. These include the very well-known Brun and Jacobi--Perron algorithms. In fact, there were no previous results known on the distribution  laws for their finite orbits. This has lead us to formulated a flexible framework with great generality that can be applied to a variety of continued fraction maps, including  the Brun and Jacobi--Perron algorithms. Since our hypotheses are easily  verified we expect further applications to follow in the future (See Section \ref{subsec:rem}).

\medskip

We now move on to discuss our general results and approach. 

\subsection{The setting}
Let us introduce the necessary notation to present the dynamical formulation of our main results. 

Let $I \subset \mathbb{R}^m$ be a compact connected  subset and $T$ be a self-map on $I$.
 Let $\{I_j\}_{j \in J}$ be a  countable partition (modulo a null set) of pairwise disjoint open subsets such that 
\[ I=\bigcup_{j \in J} \overline{I_j} \quad  \mbox{and }  \quad  T|_{I_j}: I_j \rightarrow T(I_j) \ \mbox{is bijective} . \] 
We further assume that  the  restriction  $T|_{I_j}$ can be extended to a $C^{1+\text{Lip}}$ map on  its  closure $\overline{I_j}$. However, we do not necessarily assume that $T$ is a full branch map, i.e., we do not require  that $T(\overline{I_j}) = I$ for all $j \in J$. 

Let $\mathcal{H}=\{ h_j=T|_{I_j}^{-1}: j \in J \}$ denote the set of inverse branches  of the map $T$. For $n \geq 1$ and $(j_1, \cdots, j_n) \in J^n$, we inductively define
\[ I_{j_1, \cdots, j_n}=\{ x \in I_{j_1}: T(x) \in I_{j_2, \cdots, j_n} \} . \]
It follows that $h=h_{j_1} \circ \cdots \circ h_{j_n}$ induces a bijection  from $T^n (I_{j_1, \cdots, j_n}) $ onto $ I_{j_1, \cdots, j_n}$. We call $h$ an \emph{admissible inverse branch of depth $n$} and denote by $\mathcal{H}^n$ the set of all such inverse branches. We also set $\mathcal{H}^\ast = \cup_{n\ge 1} \mathcal{H}^n$.

Throughout, we make the following uniform contraction, distortion and Markovian assumptions on inverse branches. We emphasise that these correspond to natural, mild conditions that are sufficient to deduce the existence of an absolutely continuous invariant measure (See e.g., \cite{mayer}).

\begin{assumption}[\textbf{Markov}]  \label{assmp:exp}
Let $(I,T)$ be as above. We assume that the map $T$ is \emph{topologically transitive} (i.e., there is a point with a dense orbit)
and that it satisfies the following.
 \begin{enumerate}
\item The inverse branches  are \emph{uniformly contracting}, i.e., there is a uniform constant $0<\rho<1$ such that   for all  $ h \in  \mathcal{H}^n $ and for $n \geq 1$
\[  |h(x)-h(y)| \leq \rho^n |x-y|,  \ x,y \in \mathrm{Dom}(h) \]
where $ \mathrm{Dom}(h)$ denotes the domain of $h$.

\item The  inverse branches have \emph{bounded distortion} of their Jacobians, i.e., there is a uniform constant $M >0$ such that    for all  $ h \in  \mathcal{H}^n $ and for $n \geq 1$

\[ \left| \frac{J_h(x)}{J_h(y)}-1\right| \leq M |x-y| , \ \ x,y \in \mathrm{Dom}(h) . \]
Here $|J_h(x)|=|\det Dh(x) |$ denotes the Jacobian determinant of $h$. 

\item There is a finite partition (modulo a null set) $\mathcal{P}=\{P_a\}_{a \in A}$ of pairwise disjoint open subsets for $I$ such that $I=\bigcup_{a \in A} \overline{P_a}$ and $\{P_a\}_{a \in A}$ is compatible with $\{I_j\}_{j \in J}$ as follows. Each $T(I_j)$ is a  union of the closure of partition elements. Further, if $I_j \cap P_a \neq \emptyset$, then $I_j \subset P_a$. 
Lastly, for $j \in J$ and $a \in A$, there either exists a unique $b \in A$ such that 
\[ h_j(P_a) \subset P_b \cap I_j, \ \mbox{or } h_j(P_a) \cap I = \emptyset . \]
\end{enumerate}
\end{assumption}

We now describe the subspace of $I$  on which our estimates apply  in terms of  finite backward orbits under the map $T$ of a given point.  Fix a point $x_0 \in I$ and write
\begin{equation}\label{eq:defX}
X = \bigcup_{n \ge 1} X_n \ \text{ where } \ X_n = \{ x \in I : x =  h(x_0) \text{ for some } h \in \mathcal{H}^n\}.
\end{equation}
 Given an element $x=h(x_0) \in X$ with $h \in \mathcal{H}^\ast$, we assign to it the \emph{weight}
\begin{equation}\label{eq:weight}
w(x) = - \log|J_h(x_0)|.
\end{equation}
We impose a further assumption, called  \emph{non-arithmeticity}.
\begin{assumption} [\textbf{Non-arithmeticity}]\label{assump:nonarith}
We assume that the function $w$ does not take values lying in a lattice. That is, there does not exist $c  \in \R$ such that
\[
\{ w(x) = -\log |J_h(x)| : x \in I, h \in \mathcal{H}^\ast \text{ and $h(x) = x$}\} \subset c \Z.
\]
\end{assumption}
A familiar motivating example is provided by the Gauss map. 

\begin{example} \label{ex:intro:gauss}
Let  $I=[0,1]$ and  let $T:[0,1] \to [0,1]$ denote the Gauss map 
defined by 
$$
T(x) =
\begin{cases}
\frac{1}{x} - \lfloor \frac{1}{x}\rfloor = \{\frac{1}{x}\}  & \hbox{ if } 0 < x \leq 1\\
0 &  \hbox{ if } x=0 .
\end{cases}
$$

For $j \in \mathbb N_{\geq1}$, set $I_j:=(1/(j+1), 1/j)$.  With the notation from Assumption \ref{assmp:exp}, the set $J$  equals $\mathbb N$, and  in this case $\mathcal P$ consists of a single element $(0,1)$. We have $I=\bigcup_{j \in J} \overline{I_j}=\bigcup_{P \in \Pc} \overline{P}$ with $\mathrm{Leb}(\partial I_j)=\mathrm{Leb}(\partial P)=0$.  The inverse branch of $T$ associated with $j \in J$  is $h_j: (0,1) \to I_j$ given by $x \mapsto \frac{1}{j+x}$.

 In the particular case that $x_0=0$, we can identify  the set $X $ from (\ref{eq:defX}) as $X= \mathbb Q$.  Moreover, for any $ n \geq1$, the image of $x_0$ under the elements in $\mathcal{H}^n$  correspond to rationals $p/q $ in $I$  with $(p,q)=1$ and for which the Euclidean algorithm stops   exactly after  $n$ steps.
 One gets a unique expansion 
 with the convention that the  last partial quotient in the corresponding  continued fraction expansion is  larger than or equal to $2$. More precisely, 
  for $x \in (0,1) \cap \mathbb{Q}$, we have  $j_1, \cdots, j_{n-1} \geq 1$ and $j_n \geq 2$ such that for all $i \in \{1, \cdots, n-1\}$
\[ T^{i-1}(x) \in I_{j_i}, \  T^{n-1}(x) \in \overline{I_{j_n}}, \  \mbox{ and } T^n(x)=0 .\]
 This yields
\[ x:=p/q=[0; j_1, \ldots, j_n] = h_{j_1} \circ \cdots \circ h_{j_n} (0).
\]

Note that we can identify $h \in \mathcal{H}^n$ with the linear fractional transformation 
\[
x \mapsto \frac{rx+p}{sx+q}  \, \text{ and the matrix } \, \begin{bmatrix} r & p \\ s & q \end{bmatrix} =\begin{bmatrix} p_{n-1} & {p_{n}}\\ q_{n-1} & {q_n} \end{bmatrix}=\begin{bmatrix} 0 & 1 \\ 1 & j_1 \end{bmatrix} \cdots \begin{bmatrix} 0 & 1 \\ 1 & j_n \end{bmatrix} \in \GL_2(\Z).
\]
 Thus we have $q=|(h_{j_1} \circ \cdots \circ h_{j_n})'(0)|^{\frac{1}{2}}$ and the associated weight becomes $w(p/q) = 2 \log q$.
We will check in Section \ref{subsec:cfGauss} that Assumptions \ref{assmp:exp} and \ref{assump:nonarith} are satisfied.
\end{example}

We aim to study distributional results for $h(x_0)$, for $h \in \mathcal{H}^n$. As far as we know, there are only a few examples where related results have been established, which include the work \cite{baladi-vallee} where they successfully adopted transfer operator analysis for the limit laws of the costs associated to continued fractions on the set of rationals with bounded denominator. 
Their  method may generalise to other settings, and does have error estimates, but  unfortunately, their proof is quite technical and requires strong hypothesis. In addition to Assumptions \ref{assmp:exp} and \ref{assump:nonarith}, their arguments need  topological mixing and a Uniform Non-Integrability (UNI) property for Dolgopyat estimate that can be very difficult to verify in other cases.

\subsection{The results}\label{sec.main}
The purpose of this article is to present a general ``Dolgopyat estimate free" proof that the even more general vector-valued costs associated to  dynamical system $(I,T)$ satisfying Assumption \ref{assmp:exp} and \ref{assump:nonarith} admit a large deviation principle and  
a Gaussian limit distribution as random variables on a discrete subspace $X$ of $I$. 

This not only allows us to have the basic CLTs for finite backward  orbits simply by checking a priori mild assumptions, but also to observe interesting multidimensional statistics.  In particular, we compare frequencies of digits in the symbolic expansions in terms of the associated covariance matrix.

We assign to  each $x \in I$ a cost  via the locally constant functions $c_j: I \to \mathbb{R}_{\geq 0}$  defined for $j \in J$ by 
$$
c_j (x)= 
\begin{cases}
1 &\hbox{ if } x \in I_j \cr
0 & \hbox{ otherwise.} 
\end{cases}
$$
In this definition, the weight $1$ can be replaced by any $c>0$, however we keep $c=1$ to explicitly remark later on the genericity for the behaviour of finite orbits.

We define  a \emph{counting function} $N_j$ on $X$ in a natural way: If $x = h(x_0)$ with $h \in \mathcal H^n$, then we write
\[
N_j(x) = \#\{ 0\le i \le n-1 : T^{i}(x) \in I_j \}= \sum_{0\le i \le n-1} c_j( T^{i}(x)).\]

From now on, we assume  $(I,T)$ and $w(\cdot)$ satisfy Assumptions \ref{assmp:exp} and \ref{assump:nonarith}. Our first result describes a frequency result for  elements of $X$ with respect to the non-arithmetic weight $w$.
The following law of large numbers shows in particular that the set $X$ is not  too thin.

\begin{theorem}\label{thm.firstmoment}
There  exists $C>0$ such that $$\#\{x \in X : w(x) < Q\} \sim C e^{Q}$$ as $Q \to \infty$.
Moreover,
for each $j \in J$, there exists $\Lambda_j > 0$ such that
\[
\lim_{Q\to\infty} \frac{1}{\#\{x \in   X : w(x) < Q\}}\sum_{w(x) < Q}  \frac{N_j(x)}{Q}  = \Lambda_j. 
\]
In particular, we have \[
\sum_{w(x) < Q}  N_j(x)  \sim C \, \Lambda_j  \, Q\, e^{Q}
\mbox{ as }Q \to \infty.\]
\end{theorem}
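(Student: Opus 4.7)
The plan is to recast the counting problem via the family of transfer operators
\[ \mathcal{L}_s f(x) = \sum_{h \in \mathcal{H}} |J_h(x)|^s f(h(x)), \]
defined on a suitable space of Lipschitz functions on $I$ for complex $s$ in a neighbourhood of $1$. The uniform contraction and distortion hypotheses of Assumption \ref{assmp:exp}, together with the finite Markov partition $\mathcal{P}$, yield a Lasota--Yorke inequality from which quasi-compactness follows; this supplies a simple, isolated leading eigenvalue $\lambda(s)$ depending analytically on $s$, with corresponding eigenfunction $\phi_s$ and conformal eigenmeasure $\nu_s$. The standard calculation shows $\lambda(1) = 1$ and $\lambda'(1) < 0$, while the non-arithmeticity Assumption \ref{assump:nonarith} ensures, via a Ruelle--Pollicott--Parry-type argument, that the spectral radius of $\mathcal{L}_{1+it}$ is strictly less than one for every real $t \neq 0$.

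Next, iteration at the base point gives $\mathcal{L}_s^n \mathbf{1}(x_0) = \sum_{x \in X_n} e^{-s w(x)}$, so the Dirichlet series
\[ \eta(s) := \sum_{x \in X} e^{-s w(x)} = \sum_{n \geq 1} \mathcal{L}_s^n \mathbf{1}(x_0) \]
converges for $\Re(s) > 1$ and, via the spectral decomposition of $\mathcal{L}_s^n$, extends continuously to the closed half-plane $\{\Re(s) \geq 1\}$ except for a simple pole at $s = 1$ with residue $C := \phi_1(x_0) \nu_1(\mathbf{1}) / |\lambda'(1)|$. Writing $N(Q) := \#\{x \in X : w(x) < Q\}$, the Laplace transform identity $\int_0^\infty N(Q) e^{-sQ}\, dQ = \eta(s)/s$ puts us in position to apply the classical Wiener--Ikehara Tauberian theorem, yielding $N(Q) \sim C e^{Q}$. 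I note that only qualitative non-arithmeticity is required here to rule out further poles on $\Re(s) = 1$; the quantitative Dolgopyat--UNI estimates that \cite{baladi-vallee} invoke are not needed for this leading-order asymptotic.

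For the second assertion, introduce the twisted transfer operator
\[ \mathcal{L}_{s,u} f(x) = \sum_{k \in J} |J_{h_k}(x)|^s e^{u\, c_j(h_k(x))} f(h_k(x)), \]
so that $\mathcal{L}_{s,u}^n \mathbf{1}(x_0) = \sum_{x \in X_n} e^{-s w(x) + u N_j(x)}$. Analytic perturbation theory supplies a leading eigenvalue $\lambda_j(s, u)$ with $\lambda_j(s, 0) = \lambda(s)$, and a short computation identifies $\partial_u \lambda_j(1, 0) = \mu(I_j) > 0$, where $\mu$ is the $T$-invariant probability measure with density $\phi_1$ with respect to $\nu_1$. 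Differentiating the spectral decomposition at $u = 0$ produces a double pole at $s = 1$ for $\sum_{x \in X} N_j(x) e^{-s w(x)}$, with leading Laurent coefficient $C \Lambda_j$ where $\Lambda_j := \partial_u \lambda_j(1, 0)/|\lambda'(1)|$. A Tauberian theorem for double poles, for instance via applying Wiener--Ikehara to $(s-1)\,\partial_u \eta(s, 0)$, then delivers $\sum_{w(x) < Q} N_j(x) \sim C \Lambda_j\, Q\, e^{Q}$; dividing by $N(Q) \cdot Q$ gives the claimed law of large numbers with constant $\Lambda_j$.

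The main obstacle lies not in the Tauberian extraction but in installing the spectral framework cleanly for this setting, which features a countable, non-full-branch Markov system: one must select a Banach space on which $\mathcal{L}_s$ is quasi-compact and verify that the spectral gap and analytic dependence on $s$ persist uniformly for $s$ in a complex neighbourhood of $1$. Once these properties are in place, the extraction of the two asymptotics reduces to classical Tauberian theory driven only by the qualitative non-arithmeticity hypothesis.
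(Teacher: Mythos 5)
Your proposal follows essentially the same route as the paper: recast the counting via transfer operators, use the Lasota--Yorke quasi-compactness and analytic perturbation theory to get a simple leading eigenvalue with $\lambda(1)=1$ and $\lambda'(1)<0$, invoke non-arithmeticity to rule out poles on $\Re(s)=1$ away from $s=1$, and then apply a Tauberian theorem to the resulting Dirichlet series (with the twisted operator producing the double pole needed for $\sum N_j(x)e^{-sw(x)}$). The one minor imprecision is your proposed double-pole extraction — $(s-1)\partial_u\eta(s,0)$ is not itself the Laplace--Stieltjes transform of a monotone function, so Wiener--Ikehara does not directly apply; the paper instead invokes Delange's Tauberian theorem (Proposition \ref{prop.tau}), which handles poles of arbitrary integer order in one stroke, and it also inserts the modified operator $\widetilde{\mathcal{L}}^{\sharp}_{s,t}$ at the final step to account for non-unique symbolic representations of points of $X$, a technicality your version of $\eta(s)=\sum_n\mathcal{L}_s^n\mathbf{1}(x_0)$ silently ignores.
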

Here and throughout the rest of the paper, the notation `$\sim$' represents the asymptotic limit: that is if $f,g : \R \to \R$ are real valued functions then $f(t) \sim g(t)$ as $t \to\infty$ means that $f(t)/g(t) \to 1$ as $t\to\infty$.

\begin{example} \label{expl:gauss}
 We revisit Example \ref{ex:intro:gauss}  with 
 the Gauss map. 
 Writing 
 $$x = [0; j_1, \ldots, j_{n}] = \frac{1}{j_1 + \frac{1}{j_2 + \cdots + \frac{1}{j_n}}}$$
 as a finite continued fraction expansion, this  yields  $ N_j(x) = \#\{1 \leq i \leq n \hbox{ : }  j_i = j \}$, i.e., $  N_j(x)$ counts the number of
  occurrences of the prescribed partial quotient  $j$  in the expansion of $x$.  We will see that $\Lambda_j$ is given by $ \int_{I_j} g(x)dx$ (up to an absolute positive constant $2/h(T)$,
  where $h(T)$ is the entropy of the Gauss map),   and $g(x)=\frac{1}{\log 2 (1+x)}$ is the invariant density.  
  
\end{example}

In fact, we can  also deduce  Theorem \ref{thm.firstmoment} from the following large deviation principle and central limit theorem.
\begin{theorem} \label{thm.largedev}
For each $j \ge 1$ and  for any $\epsilon >0$
\[
\limsup_{Q\to\infty} \frac{1}{Q} \log \left( \frac{1}{\#\{x \in X : w(x) < Q\}} \#\left\{ x\in X: w(x) < Q, \, \left| \frac{N_j(x)}{Q} - \Lambda_j \right| > \epsilon \right\}\right) < 0.
\]
Furthermore, for each $j \ge 1$ there exists $\sigma^2_j > 0$ such that for any $a \in \R$
\[
\frac{1}{\#\{x \in X: w(x) <Q\}} \#\left\{ x \in X: w(x)< Q \ \text{ and } \frac{N_j(x)}{\sqrt{Q}} \le a \right\} \to \frac{1}{\sqrt{2 \pi} \sigma} \int_{-\infty}^a e^{-u^2/2\sigma_j^2}\ du.
\]
\end{theorem}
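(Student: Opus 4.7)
The plan is to use spectral analysis of a two-parameter family of transfer operators combined with a Wiener--Ikehara Tauberian theorem, thereby avoiding the Dolgopyat-type polynomial decay estimates needed in~\cite{baladi-vallee}. For $s \in \mathbb{C}$ near $1$ and $\xi \in \mathbb{C}$ near $0$, introduce
\[
\mathcal{L}_{s,\xi} f(x) \;=\; \sum_{h \in \mathcal{H},\, x \in \mathrm{Dom}(h)} |J_h(x)|^s\, e^{\xi c_j(h(x))}\, f(h(x)),
\]
acting on a Banach space of piecewise regular (Lipschitz or $C^1$) functions adapted to the finite partition $\mathcal{P}$. Assumption~\ref{assmp:exp}(3) lets us view $\mathcal{L}_{s,\xi}$ as a matrix of operators indexed by $\mathcal{P}$, while uniform contraction and bounded distortion (Assumption~\ref{assmp:exp}(1)--(2)) yield a Lasota--Yorke inequality and hence quasi-compactness. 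Topological transitivity together with non-arithmeticity produces a simple, isolated dominant eigenvalue $\lambda(s,\xi)$, analytic in $(s,\xi)$ with $\lambda(1,0)=1$. By the Implicit Function Theorem there is a real-analytic $s(\xi)$ with $s(0)=1$, $\lambda(s(\xi),\xi)=1$, and $-s'(0)=\Lambda_j>0$, $s''(0)=\sigma_j^2>0$ by strict convexity of pressure (and $c_j$ not cohomologous to a constant).

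Iteration gives $\mathcal{L}_{s,\xi}^n \mathbf{1}(x_0) = \sum_{x \in X_n} e^{-s w(x) + \xi N_j(x)}$, so the Dirichlet series
\[
\eta(s,\xi) \;=\; \sum_{n \ge 1} \mathcal{L}_{s,\xi}^n \mathbf{1}(x_0) \;=\; \sum_{x \in X} e^{-s w(x) + \xi N_j(x)}
\]
converges for $\operatorname{Re}(s)>s(\xi)$ and, via the spectral decomposition, extends meromorphically to a strip $\operatorname{Re}(s)>s(\xi)-\delta$ with a single simple pole at $s=s(\xi)$. Assumption~\ref{assump:nonarith} forces the spectral radius of $\mathcal{L}_{s+it,\xi}$ to be strictly less than $\lambda(s(\xi),\xi)=1$ for real $t \ne 0$, so $\eta(s,\xi)$ is analytic on the critical line except at that single pole. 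The Wiener--Ikehara theorem then yields, for each real $\xi$ near $0$,
\[
\sum_{x \in X,\, w(x)<Q} e^{\xi N_j(x)} \;\sim\; A(\xi)\, e^{s(\xi) Q}\qquad \text{as } Q\to\infty,
\]
with $A(\xi)>0$ continuous in $\xi$. Specialising to $\xi=0$ recovers Theorem~\ref{thm.firstmoment}.

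Dividing by $\#\{w(x)<Q\}\sim A(0) e^{Q}$ gives
\[
\frac{1}{Q}\log \mathbb{E}_Q\bigl[e^{\xi N_j}\bigr] \;\longrightarrow\; s(\xi)-1,
\]
where $\mathbb{E}_Q$ is expectation under the uniform measure on $\{w(x)<Q\}$. Since $s(\xi)-1$ is $C^2$ and strictly convex at the origin, the Gärtner--Ellis theorem produces the large deviation principle with rate function $I(y)=\sup_\xi\bigl(\xi y - (s(\xi)-1)\bigr)$, which is strictly positive for $y \ne \Lambda_j$, giving the first half of the statement. For the Gaussian limit, substitute $\xi = it/\sqrt{Q}$, centre by $\Lambda_j Q$, and use the Taylor expansion $s(\xi) = 1 - \Lambda_j \xi + \tfrac12 \sigma_j^2 \xi^2 + O(\xi^3)$ together with continuity of $A$ to show that the characteristic function of $(N_j - \Lambda_j Q)/\sqrt{Q}$ under $\mathbb{P}_Q$ converges to $e^{-\sigma_j^2 t^2/2}$; Lévy's continuity theorem finishes the CLT.

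The main obstacle is the spectral input rather than the probabilistic machinery: one must establish quasi-compactness of $\mathcal{L}_{s,\xi}$ on a suitable Banach space despite the \emph{countable} alphabet $J$, and then verify analyticity of $\eta(s,\xi)$ on the critical line $\operatorname{Re}(s)=s(\xi)$ using only the qualitative non-arithmeticity provided by Assumption~\ref{assump:nonarith}. Dispensing with Dolgopyat estimates is both the source of the generality of the result and the reason we obtain no quantitative error terms; nevertheless the pole structure delivered by Wiener--Ikehara is exactly enough to power both the LDP and the CLT.
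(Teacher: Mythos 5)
Your LDP argument is essentially the one in the paper: both route through a local Gärtner--Ellis principle fed by a Tauberian theorem applied to the family $\xi\mapsto\sum_{w(x)<Q}e^{\xi\varphi_j(x)}$, with the pole at $s=\sigma(\xi)$ and the non-arithmeticity hypothesis ruling out other poles on the critical line. One small caution: since you only establish the limit $\tfrac1Q\log\mathbb{E}_Q[e^{\xi N_j}]$ for $\xi$ in a neighbourhood of the origin (not on all of $\mathbb{R}$), you must invoke a \emph{local} Gärtner--Ellis statement (the paper cites Lemma~3.11 of \cite{CanPol2}); the global Gärtner--Ellis theorem does not apply as stated.

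The CLT part, however, has a genuine gap. You propose to substitute $\xi=it/\sqrt{Q}$ into the asymptotic $\sum_{w(x)<Q}e^{\xi N_j(x)}\sim A(\xi)\,e^{s(\xi)Q}$ and conclude by Lévy's continuity theorem. That step requires the $o(1)$ error implicit in $\sim$ to be \emph{uniform} in $\xi$ on a shrinking neighbourhood of the origin --- i.e.\ a quasi-power expansion à la Hwang. But the Wiener--Ikehara (or Delange) Tauberian theorem only delivers an asymptotic for each \emph{fixed} $\xi$, with no control on how the convergence rate degrades as $\xi$ varies; continuity of $A(\xi)$ in $\xi$ does not repair this, because you are letting $Q\to\infty$ and $\xi\to0$ simultaneously. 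The uniform quasi-power estimate is exactly what Baladi--Vallée extract from Perron's formula with a Dolgopyat-type decay of $\|\mathcal{L}_{s,\xi}^n\|$ along vertical lines --- precisely the hypothesis this paper is designed to dispense with. To avoid it, the paper abandons the characteristic-function route entirely and uses the \emph{method of moments}: for each fixed $\widehat{p}$ it applies the Tauberian theorem to the single Dirichlet series $\eta_{\widehat{p}}(s)=\sum_{x}\varphi_{\widehat{p}}(x)e^{-sw(x)}$ (Propositions~\ref{prop.analyticity}, \ref{prop.even}, \ref{prop.odd}), obtaining convergence of each normalised moment to the corresponding Gaussian moment; since the Gaussian is determined by its moments, this yields the CLT with no uniformity in an auxiliary parameter required. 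Your proposal would need either a uniform Tauberian argument (effectively reintroducing Dolgopyat) or a switch to the moment method to close this gap.
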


\medskip

Let us now turn to the multidimensional version. Let $d\geq 1$ and fix  distinct numbers $j_1, \ldots, j_d $ in $J$ that are
admissible in the sense of Assumption \ref{assmp:exp}.(3). We now want to compare the counting  functions $N_{j_1}, \ldots, N_{j_d}$. To simplify the notation, we write
\begin{equation}\label{eq:nbar}
\overline{N}(x) = (N_{j_1}(x), \ldots, N_{j_d}(x)) \in \N^d \ \text{ for $x \in X$.}
\end{equation}
We  also consider the \emph{centred  counting functions}
\[
\varphi_i(x) = N_{j_i}(x) - w(x)  \Lambda_{j_i} 
\]
and write
\begin{equation}\label{eq:phibar}
\overline{\varphi}(x) = (\varphi_1(x), \ldots, \varphi_d(x)) \in \mathbb{R}^d.
\end{equation}

We then have the following Central Limit Theorem, which formalises Theorem \ref{motif:brun} for general maps satisfying Assumption \ref{assmp:exp}  and \ref{assump:nonarith}.

\begin{theorem}\label{thm.mdclt:intro}
There exists a positive definite, symmetric matrix $\Sigma \in \GL_d(\mathbb R)$ such that, for any non-empty open $U \subset \mathbb R^d$, we have
\[
\frac{1}{\#\{x \in X: w(x) <Q\}} \#\left\{ x \in X: w(x)< Q \ \text{ and } \frac{\overline{\varphi}(x)}{\sqrt{Q}} \in U \right\} \to \frac{1}{(2 \pi \det(\Sigma))^{d/2}} \int_{U} e^{-\frac{1}{2} \langle u, \Sigma u \rangle} \ du
\]
as $Q\to\infty$.
Moreover, the covariance matrix $\Sigma$ is described by a Hessian of $\overline{\varphi}$.
\end{theorem}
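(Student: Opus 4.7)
The plan is to prove the multidimensional CLT by showing pointwise convergence of the characteristic function of the empirical measures and then invoking L\'evy's continuity theorem in $\mathbb{R}^d$; the analytic input comes from spectral perturbation theory applied to a family of twisted transfer operators. Introduce
\[
(\mathcal{L}_{s,\xi} f)(y) = \sum_{j \in J} |J_{h_j}(y)|^{s}\, e^{i\langle \xi, \tilde c(j)\rangle} f(h_j(y)),
\]
for $(s,\xi)$ in a complex neighbourhood of $(1,0) \in \mathbb{C}\times\mathbb{C}^d$, where $\tilde c(j) = e_i$ if $j = j_i$ and $\tilde c(j) = 0$ otherwise. Iterating and evaluating at $x_0$ produces $(\mathcal{L}_{s,\xi}^n \mathbf{1})(x_0) = \sum_{h \in \mathcal{H}^n} |J_h(x_0)|^{s} e^{i\langle \xi, \overline N(h(x_0))\rangle}$. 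Under Assumption \ref{assmp:exp}, standard Ruelle--Perron--Frobenius theory on Lipschitz functions (justified by uniform contraction, bounded distortion, and Markov transitivity) gives that $\mathcal{L}_{s,\xi}$ is quasi-compact, with a simple leading eigenvalue $\lambda(s,\xi)$ depending analytically on both variables near $(1,0)$, satisfying $\lambda(1,0)=1$ and $\partial_s\lambda(1,0) \ne 0$.

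Form the Dirichlet-type series
\[
F(s,\xi) = \sum_{x \in X} e^{-sw(x)} e^{i\langle \xi, \overline{\varphi}(x)\rangle} = \sum_{n \ge 1} (\mathcal{L}_{s + i\langle \xi, \overline{\Lambda}\rangle,\,\xi}^n \mathbf{1})(x_0),
\]
where the shift in $s$ absorbs the centring $\overline{\varphi} = \overline{N} - w\,\overline{\Lambda}$ with $\overline{\Lambda} = (\Lambda_{j_1},\ldots,\Lambda_{j_d})$. Spectral decomposition exhibits $F(s,\xi)$ as meromorphic near $s=1$ with a single simple pole at $s(\xi)$ solving $\lambda(s(\xi)+i\langle\xi,\overline{\Lambda}\rangle,\xi) = 1$. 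Identifying each $\Lambda_{j_i}$ as the associated first derivative of $\lambda$ cancels the linear term, so the implicit function theorem gives $s(\xi) = 1 - \tfrac{1}{2}\langle \xi, \Sigma \xi\rangle + O(|\xi|^3)$ with $\Sigma$ the relevant Hessian at the origin; positive definiteness follows because Assumption \ref{assump:nonarith} precludes any non-trivial linear combination of the components of $\overline{\varphi}$ from being cohomologous to a multiple of $w$. The same non-arithmeticity assumption ensures $F(s,0)$ has no other singularities on the line $\{\mathrm{Re}(s)=1\}$, a property that persists for small $\xi$ by continuity.

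A Tauberian argument in the spirit of Wiener--Ikehara, or equivalently Hwang's quasi-power theorem, applied uniformly in $\xi$ then yields
\[
\sum_{w(x)<Q} e^{i\langle \xi, \overline{\varphi}(x)\rangle} \sim R(\xi)\, e^{s(\xi)Q}, \qquad Q \to \infty,
\]
with $R$ continuous and $R(0)>0$. Scaling $\xi \mapsto \xi/\sqrt Q$, using the expansion $(s(\xi/\sqrt Q)-1)Q = -\tfrac12\langle \xi, \Sigma\xi\rangle + O(Q^{-1/2})$, and dividing by $\#\{w(x)<Q\}\sim Ce^Q$ from Theorem \ref{thm.firstmoment} produces
\[
\frac{1}{\#\{w(x)<Q\}} \sum_{w(x)<Q} e^{i\langle \xi, \overline{\varphi}(x)\rangle/\sqrt{Q}} \longrightarrow e^{-\tfrac12\langle \xi, \Sigma \xi\rangle}.
\]
L\'evy continuity in $\mathbb{R}^d$ then forces weak convergence of the empirical measures to the corresponding centred Gaussian; since the limit law has a continuous density, every non-empty open $U \subset \mathbb{R}^d$ is a continuity set, giving the stated conclusion.

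The main obstacle will be the uniform Tauberian step. Without a Dolgopyat-type polynomial decay of $\|\mathcal{L}_{1+it,0}\|$ as $|t|\to\infty$, one cannot shift contours to secure quantitative error terms, and must rely solely on the qualitative absence of singularities on $\{\mathrm{Re}(s)=1\}$ together with the local analytic structure at the dominant pole. Upgrading this qualitative information into the uniform convergence of characteristic functions required by the multidimensional L\'evy argument--through Hwang's quasi-power framework or a multiparameter Delange--Ikehara estimate--while also accommodating the fact that the index set is the backward orbit $X$ rather than a classical arithmetic set, is where the technical heart of the proof will lie.
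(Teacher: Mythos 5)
Your proposal follows the classical Baladi--Vall\'ee route: twist the transfer operator by a complex character, extract a quasi-power asymptotic for $\sum_{w(x)<Q}e^{i\langle\xi,\overline\varphi(x)\rangle}$ from the dominant pole of the Dirichlet series, rescale $\xi\mapsto\xi/\sqrt Q$, and invoke L\'evy continuity. The paper does something genuinely different. It avoids characteristic functions altogether and uses the \emph{method of moments} in the spirit of Morris: for each multi-index $\widehat p$ it studies $\eta_{\widehat p}(s)=\sum_{x\in X}\varphi_{\widehat p}(x)e^{-sw(x)}$, computes the order and leading coefficient of the pole at $s=1$ via Fa\`a di Bruno's formula (Proposition~\ref{prop.analyticity}), then feeds suitable \emph{nonnegative} combinations (such as $\varphi_{2\widehat p}(x)+w(x)^p$ and $(\varphi_{\widehat p}(x)+w(x)^{p/2})^2$) into the single-variable Delange Tauberian theorem (Proposition~\ref{prop.tau}) to identify the asymptotics of the moments, handling even and odd $|\widehat p|$ separately (Propositions~\ref{prop.even} and~\ref{prop.odd}). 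The moments are then matched to those of the Gaussian.

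The gap in your version is exactly the one you flag in your final paragraph, but it is not a technicality you can safely defer; it is the point the whole paper is built to avoid. Two concrete issues. First, the Delange Tauberian theorem requires the counting function $\phi$ to be monotone increasing, so it does not apply directly to the complex-valued, oscillatory sums $\sum_{w(x)<Q}e^{i\langle\xi,\overline\varphi(x)\rangle}$; one must decompose into nonnegative pieces, and the asymptotics of the pieces need not recombine into a usable asymptotic for the signed sum without further quantitative control. Second, and more fundamentally, after rescaling you need
\[
\frac{1}{\#\{w(x)<Q\}}\sum_{w(x)<Q}e^{i\langle\xi,\overline\varphi(x)\rangle/\sqrt Q}\longrightarrow e^{-\frac12\langle\xi,\Sigma\xi\rangle}
\]
for each fixed $\xi$, which means the error term in the Tauberian asymptotic must be controlled \emph{uniformly} for the shrinking arguments $\xi/\sqrt Q\to 0$. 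A purely qualitative Tauberian theorem (no rate) gives only, for each fixed nonzero parameter, $\sum e^{i\langle\cdot,\cdot\rangle}\sim R(\cdot)e^{s(\cdot)Q}$, and one cannot interchange the limit $Q\to\infty$ with the evaluation at $\xi/\sqrt Q$. Hwang's theorem supplies precisely the needed uniformity, but it takes as input an error term $O(1/k_n)$ that in the transfer operator setting comes from contour shifting, i.e.\ from Dolgopyat-type polynomial decay of $\|\mathcal L_{1+i\tau,\xi}\|$ in $|\tau|$. Under Assumptions~\ref{assmp:exp} and~\ref{assump:nonarith} alone, the paper only knows there are no poles on $\{\mathfrak{Re}(s)=1\}\setminus\{1\}$, with no decay rate; this is enough for Delange applied to nonnegative monotone sums one at a time, but not for your uniform-in-$\xi$ quasi-power estimate. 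That is precisely why the paper replaces the characteristic-function/L\'evy route by the method of moments, where every Tauberian application is to a single monotone function and no uniformity over an auxiliary parameter is needed.
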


\begin{example}
For the Gauss map, we have $T(I_j)=(0,1)$ for all $j \in J$. This means that there is no correlation among the  admissible digits, hence the covariance matrix $\Sigma$ is given by a diagonal matrix. In fact, the covariance matrix encodes explicitly the relations provided by the Markovian condition from Assumption \ref{assmp:exp}.(3).
\end{example}

As an immediate consequence of Theorem \ref{thm.largedev}, we also deduce the following multidimensional large deviation principle.
\begin{theorem} \label{thm.multilargedev}
For each $\epsilon >0$
\[
\limsup_{Q\to\infty} \frac{1}{Q} \log \left( \frac{1}{\#\{x \in X : w(x) < Q\}} \#\left\{ x\in X: w(x) < Q, \,  \left\| \frac{\overline{\varphi}(x)}{Q} \right\| > \epsilon \right\}\right) < 0
\]
where $\|\cdot\|$ is any fixed norm on $\R^d$.
\end{theorem}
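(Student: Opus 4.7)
The plan is to reduce Theorem \ref{thm.multilargedev} to the one-dimensional large deviation bound of Theorem \ref{thm.largedev} together with the counting asymptotic of Theorem \ref{thm.firstmoment}. Since any two norms on $\R^d$ are equivalent, it suffices to treat $\|\cdot\|$ as the $\ell^\infty$ norm. A union bound over the $d$ coordinates then reduces matters to showing that, for each fixed $i \in \{1,\ldots,d\}$,
\[
\limsup_{Q\to\infty} \frac{1}{Q} \log \left( \frac{\#\{x \in X : w(x) < Q,\ |\varphi_i(x)|/Q > \epsilon\}}{\#\{x \in X : w(x) < Q\}} \right) < 0.
\]

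Fix such an $i$ and decompose
\[
\frac{\varphi_i(x)}{Q} \;=\; \left(\frac{N_{j_i}(x)}{Q} - \Lambda_{j_i}\right) + \left(1 - \frac{w(x)}{Q}\right)\Lambda_{j_i}.
\]
Setting $\eta = \epsilon/(2\Lambda_{j_i})$, the triangle inequality yields the inclusion
\[
\bigl\{ x \in X : w(x) < Q,\ |\varphi_i(x)|/Q > \epsilon \bigr\} \;\subset\; A_Q \cup B_Q,
\]
where $A_Q = \{x \in X : w(x) < Q,\ |N_{j_i}(x)/Q - \Lambda_{j_i}| > \epsilon/2\}$ and $B_Q = \{x \in X : w(x) < (1-\eta)Q\}$, since if $x$ lies in neither set the two summands above contribute at most $\epsilon/2$ each. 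The density of $A_Q$ in $\#\{x \in X : w(x) < Q\}$ decays exponentially in $Q$ by Theorem \ref{thm.largedev}, while Theorem \ref{thm.firstmoment} gives $\#B_Q \sim C e^{(1-\eta)Q}$ against $\#\{x \in X : w(x) < Q\} \sim C e^{Q}$, so the density ratio is of order $e^{-\eta Q}$. Combining the two estimates and taking a union bound over $i$ completes the proof.

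I do not anticipate any serious obstacle; indeed, the paper records the multidimensional LDP as an immediate consequence of the one-dimensional one. The only conceptual content is recognising that the centred quantity $\varphi_i(x)/Q$ and the normalised count $N_{j_i}(x)/Q - \Lambda_{j_i}$ appearing in Theorem \ref{thm.largedev} differ by the correction $(1 - w(x)/Q)\Lambda_{j_i}$, and that the event where this correction is large (i.e.\ where $w(x)$ falls appreciably below $Q$) is itself exponentially thin thanks to the sharp counting asymptotic of Theorem \ref{thm.firstmoment}.
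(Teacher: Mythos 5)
Your proof is correct and supplies exactly what the paper leaves implicit when declaring the multidimensional LDP an ``immediate consequence'' of Theorem~\ref{thm.largedev}: reduce to each coordinate via equivalence of norms and a union bound, and control the discrepancy between the stated quantity $N_{j_i}(x)/Q-\Lambda_{j_i}$ and the centred cost $\varphi_i(x)/Q$ by noting that they differ by $(1-w(x)/Q)\Lambda_{j_i}$, which is only large on the set $\{w(x)<(1-\eta)Q\}$ of exponentially small relative density thanks to the sharp count $\#\{w(x)<Q\}\sim C e^Q$ from Theorem~\ref{thm.firstmoment}. It is worth noting that the same conversion you carry out is in fact already needed (but glossed over) inside the paper's own proof of Theorem~\ref{thm.largedev}, where Lemma~\ref{lem.localldp} is applied to $Z_n=\varphi_j$ yet the conclusion is stated for $N_j(x)/n-\Lambda_j$; so your decomposition both completes the present theorem and quietly patches the earlier argument.
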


We remark that  we are counting the points in $X$ up to finite multiplicity in Theorem \ref{thm.firstmoment}-\ref{thm.multilargedev} due to boundary issues for the function space we take in Definition \ref{def:lip} if $|\mathcal{P}| >1$. To remove this restriction, we may fix a point $x_0 \in I \backslash \{ T^n(x): x \in \partial P_a, P_a \in \mathcal{P}, n \geq 1 \}$. 

For an arbitrary choice of the initial point $x_0$, we believe that this issue can be resolved by adapting \cite{kim2025}. They suggested a way of introducing a finite CW-cell structure on the space of piecewise $C^1$-functions on $I$ with respect to $\mathcal{P}$ to deduce the exact counting result for the cost functions associated to complex continued fractions (See Remark \ref{rmk:cell}). 

\subsection{An overview of the proofs}\label{sec.overview}

We now briefly describe our approach and compare it with earlier approaches. The original argument employed
 by Baladi--Vall\'ee holds for the Gauss map when $m=d=1$.
They applied a Tauberian theorem (Perron's formula of order 2) to complex Dirichlet series identified using the resolvent of transfer operators, and accordingly obtained a ``Quasi-power behaviour" of the moment generating function of centred counting functions $\overline{\varphi}$  from (\ref{eq:phibar}) associated to classical continued fractions.  

The uniform quasi-power expression is useful because of the Hwang's Quasi-power Theorem \cite{Hwang}.
\begin{theorem}[Hwang's Quasi-power Theorem]
Suppose that a sequence of random variables $Y_n$ has the property that the moment generating functions $\lambda_n(s) = \E(e^{sY_n})$ are analytic in a disc $|s| < \rho$ for some $\rho >0$. Further assume that there is a uniform expression
\[
\lambda_n(s) = e^{\beta_n  U(s) + V(s)}\left( 1 + O\left(\frac{1}{k_n} \right)\right)
\]
where $\beta_n, k_n$ are sequences tending to $\infty$, $U,V$ are analytic in $|s| \le \rho$ and $U''(0) \neq 0$.

Then, the distribution of $Z_n :=  (Y_n - \beta_n U'(0))/ \beta_n U''(0)$ is asymptotically Gaussian, the speed of convergence to the Gaussian limit being $O(k_n^{-1} + \beta_n^{-1/2})$.
\end{theorem}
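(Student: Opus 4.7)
The natural strategy is to pass from the quasi-power expansion of the moment generating functions to the characteristic functions of the centred and rescaled variables $Z_n := (Y_n - \beta_n U'(0))/\sqrt{\beta_n U''(0)}$ (the normalisation $\beta_n U''(0)$ appearing in the statement is presumably a typographical slip for $\sqrt{\beta_n U''(0)}$), and then to apply Esseen's smoothing inequality in order to turn pointwise convergence of characteristic functions into an effective Kolmogorov bound against the standard Gaussian. Since $\lambda_n$ is analytic in a fixed disc $|s| < \rho$, substituting the purely imaginary argument $s = it/\sqrt{\beta_n U''(0)}$ is legal for all $|t| \le \rho \sqrt{\beta_n U''(0)}$ and gives
\[
\E\bigl(e^{itZ_n}\bigr) = e^{-it\, U'(0)\sqrt{\beta_n/U''(0)}} \; \lambda_n\!\bigl(it/\sqrt{\beta_n U''(0)}\bigr).
\]

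Next I would Taylor expand the quasi-power factor around the origin. Writing $U(s) = U(0) + U'(0)s + \tfrac12 U''(0) s^2 + O(s^3)$ and $V(s) = V(0) + O(s)$ and substituting the above value of $s$, the zeroth order terms combine with the normalisation $\lambda_n(0) = 1$ to contribute only an $O(1/k_n)$ factor, the first order term in $U$ cancels exactly with the centring exponential, the second-order term contributes $-t^2/2$, and the higher-order terms together with the multiplicative $O(1/k_n)$ error produce
\[
\E\bigl(e^{itZ_n}\bigr) = e^{-t^2/2}\, \Bigl(1 + O\bigl(|t|^3/\sqrt{\beta_n}\bigr) + O\bigl(|t|/\sqrt{\beta_n}\bigr) + O(1/k_n)\Bigr)
\]
uniformly on a growing range of $t$, say $|t| \le c\, \beta_n^{1/6}$. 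Pointwise this already yields $Z_n \Rightarrow \mathcal N(0,1)$ by L\'evy's continuity theorem. To extract the quantitative rate I would invoke Esseen's smoothing inequality, which bounds the Kolmogorov distance from $Z_n$ to the standard normal in terms of $\int_{-T}^T |t|^{-1}\,|\E(e^{itZ_n}) - e^{-t^2/2}|\, dt$ plus an error $O(T^{-1})$; splitting the integral into an inner piece where the expansion above applies and an outer piece controlled by a Gaussian-type bound, and taking $T$ of order $\sqrt{\beta_n}$, the inner piece contributes $O(\beta_n^{-1/2} + k_n^{-1})$ while the outer piece contributes $O(\beta_n^{-1/2})$, matching the announced speed.

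I expect the main technical obstacle to be the intermediate regime where the local Taylor expansion of $U$ is no longer sharp but the Esseen cutoff $T \asymp \sqrt{\beta_n}$ has not yet been reached. Because $U$ is holomorphic and $U''(0) \neq 0$, the function $t \mapsto \mathrm{Re}\, U\bigl(it/\sqrt{\beta_n U''(0)}\bigr)$ has a strict local maximum at $t = 0$, so that for $|t|$ in this intermediate range one obtains a uniform estimate of the form $\bigl|\lambda_n(it/\sqrt{\beta_n U''(0)})\bigr| \le \exp\bigl(\beta_n U(0) - \alpha t^2\bigr)$ for some $\alpha > 0$, which is enough to absorb the awkward middle range into a genuine Gaussian tail. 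Combining this tail bound with the Taylor expansion on $|t| \le c\, \beta_n^{1/6}$ and with the trivial $O(T^{-1})$ remainder from Esseen then delivers the stated Kolmogorov rate $O(k_n^{-1} + \beta_n^{-1/2})$.
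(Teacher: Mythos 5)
The theorem you were asked to prove is not actually established in this paper: it is quoted verbatim from Hwang's work \cite{Hwang} as background material, explaining the route taken by Baladi--Vall\'ee \cite{baladi-vallee}. The authors then deliberately \emph{avoid} using the quasi-power theorem: the whole point of Section \ref{sec.overview} is that invoking Hwang's theorem requires a uniform quasi-power expansion, which in turn demands Perron's formula, analytic continuation of the Dirichlet series, and Dolgopyat-type estimates. Instead they derive their CLT via a method of moments (Morris \cite{morris}, Sections \ref{subsec:CLT}--\ref{subsec:CLT2}) after controlling the singularity of $\eta_{\w{p}}$ with the Delange Tauberian theorem. So there is no ``paper's own proof'' to compare your argument against.

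That said, your sketch is essentially the standard proof of the quasi-power theorem, and it looks sound. You correctly identify the typo in the normalisation (it should be $\sqrt{\beta_n U''(0)}$), the passage from the moment generating function to the characteristic function of $Z_n$ is legitimate, the cancellation of the first-order term against the centring works, and the use of Esseen's smoothing inequality with a split into an inner Taylor regime and an outer Gaussian-tail regime is exactly what delivers the Berry--Esseen rate $O(k_n^{-1} + \beta_n^{-1/2})$. Two small points you should make explicit if you write this up carefully: (i) evaluating the quasi-power expansion at $s=0$ together with $\lambda_n(0)=1$ forces $U(0)=V(0)=0$, which you use implicitly when you say the constant terms only contribute $O(1/k_n)$; and (ii) for the intermediate range you need $U''(0)>0$ (not merely $\neq 0$), which holds because $U$ is real on the real axis and $\lambda_n$ is a genuine MGF, so that $\mathrm{Re}\, U(iy) \approx -\tfrac12 U''(0) y^2 < 0$ for small $y$ and shrinking $\rho$ if necessary suffices to push the Esseen cutoff to $T \asymp \sqrt{\beta_n}$.
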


In order to deduce the quasi-power estimate, one has to apply sophisticated Tauberian theorems that provide not just an upper bound, but also an asymptotic of precisely the right order. The use of Perron's formula  in  \cite{baladi-vallee} requires finer analysis on a Dirichlet series, in particular, uniformity and analytic continuation to $\Re(s)\geq 1-\epsilon$ with no other pole on the vertical line $\Re(s)=1$, except a unique pole at $s=1$ of order 1. To have so,  strong spectral properties are needed, namely  that the transfer operator satisfies the Dolgopyat estimate and has a spectral gap with a unique dominant eigenvalue that is simple.



Here, for all $(I,T)$ under Assumption \ref{assmp:exp} and \ref{assump:nonarith}, it is possible to deduce a spectral gap on the piecewise Lipschitz space $\oplus_{a \in A} C^{1+\mathrm{Lip}}(P_a)$ possibly without the uniqueness of the largest eigenvalue and  without 
a Dolgopyat-type estimate. Motivated by Morris \cite{morris}, this is sufficient to have Theorem \ref{thm.mdclt:intro} by generalising the method of moments. However, we remark that our result does not give any information regarding the speed of convergence, in contrast to the Baladi--Vall\'ee theorem \cite{baladi-vallee}. 

This article is organised as follows. In Section \ref{sec:examples}, we first present applications of the main theorems to some examples of multidimensional continued fraction algorithm by checking the a priori hypotheses. In Section \ref{sec:dynamics}, we introduce complex functions and transfer operators, and study their spectral properties. 
Then the law of large  numbers is proved in Section~\ref{subsec:LL} with the proof of Theorem \ref{thm.firstmoment}. 
Theorem \ref{thm.largedev}  on large deviations is proved in Section \ref{subsec:LD}. This leads to CLTs based on the moments estimates in Sections \ref{subsec:CLT}
and \ref{subsec:CLT2}.

\subsubsection*{Acknowledgements}
We  warmly thank Florian Luca for fruitful discussions  on establishing the  non-arithmeticity condition. The first author is supported by Agence Nationale de la Recherche through the project SymDynAr (ANR-23-CE40-0024) and 2024 ERC Synergy Project DynAMiCs (101167561). The last named author is supported by ERC-Advanced Grant 833802-Resonances, EPSRC Grants: APP29916 and EP/W033917/1. 


\section{Application: Examples} \label{sec:examples}

In this section, we present CLTs and multidimensional statistics for the Gauss map and generalised continued fractions simply by checking the hypotheses in Assumptions  \ref{assmp:exp} and \ref{assump:nonarith}.
Our examples include classical continued fraction (dimension $m=1$),  Brun and Jacobi-Perron  multidimensional continued fractions ($m \geq 2$). 

\subsection{Continued fractions}\label{subsec:cfGauss}

In Example \ref{ex:intro:gauss}, we have seen that each $x \in (0,1) \cap \mathbb Q$ admits a unique continued fraction expansion 
\[ x=[0; j_1, \ldots ,j_n] \]
with $j_1, \cdots, j_{n-1} \geq 1$, $j_n \geq 2$, which is identified by a depth $n$ inverse branch $h=h_{j_1} \circ \cdots \circ h_{j_n} \in \mathcal{H}^n$ of the Gauss map $T$ on $I=[0,1]$. 

We now check that $(I, T)$ satisfies all of the required hypotheses from Assumption  \ref{assmp:exp} and \ref{assump:nonarith}.

\begin{proposition}
The Gauss map satisfies the Markov  and Non-arithmeticity  assumptions.
\end{proposition}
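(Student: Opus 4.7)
The plan is to verify the three parts of Assumption \ref{assmp:exp} using the explicit formulas for the Möbius inverse branches $h_j(x)=1/(j+x)$, and then produce two periodic points whose weights have an irrational ratio to establish Assumption \ref{assump:nonarith}.

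For part (3) of the Markov condition, I would begin with the trivial observation: since $T(I_j) = (0,1)$ for every $j \in J$, the partition $\Pc = \{(0,1)\}$ with a single element works, because $h_j((0,1)) = I_j \subset (0,1)$ for each $j$, and the compatibility conditions are immediate. Topological transitivity is classical (the irrational points have dense orbits). For the distortion estimate (2), I would compute that for $h = h_{j_1}\circ\cdots\circ h_{j_n}$, the Jacobian is $J_h(x) = \pm(q_n + q_{n-1}x)^{-2}$, where $p_k/q_k$ are the convergents of $[0;j_1,\ldots,j_k]$. A direct calculation gives $|J_h(x)/J_h(y) - 1| \le 2|x-y|/(q_n+q_{n-1}y)$, which is uniformly bounded by a multiple of $|x-y|$.

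The contraction property (1) is the only mildly delicate point, since $|h_1'(0)|=1$ means that a single inverse branch need not be a strict contraction. I would handle this by the standard ``two-step'' argument: for any $j_1,j_2$, $|(h_{j_1}\circ h_{j_2})'(x)| = 1/((j_1+h_{j_2}(x))(j_2+x))^2$, and using $h_{j_2}(x) \in (1/(j_2+1), 1/j_2)$ one checks this is bounded by some $\rho_0<1$ uniformly. Chaining gives $|h'(x)| \le \rho_0^{\lfloor n/2\rfloor}$ for $h \in \Hc^n$, and the mean value theorem converts this into the required Lipschitz contraction $|h(x)-h(y)|\le \rho^n|x-y|$.

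For the non-arithmeticity assumption, I would use that periodic points of $T$ correspond to purely periodic continued fraction expansions, i.e., quadratic irrationals fixed by some $h \in \Hc^*$. If $h$ is encoded by the matrix $\begin{bmatrix}p_{n-1}&p_n\\q_{n-1}&q_n\end{bmatrix}$ with fixed point $x$, then $|J_h(x)| = (q_n+q_{n-1}x)^{-2}$, so $w(x) = 2\log(q_n+q_{n-1}x)$. I would then exhibit two concrete fixed points: the golden ratio $x_1=[0;\overline{1}] = (\sqrt5-1)/2$ giving $w(x_1)=2\log\phi$ with $\phi=(1+\sqrt5)/2$, and the silver-type ratio $x_2=[0;\overline{2}]=\sqrt2-1$ giving $w(x_2)=2\log(1+\sqrt2)$. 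The main (mild) obstacle is showing $\log\phi/\log(1+\sqrt2)\notin\Q$: assume it equals $p/q$, so $\phi^q=(1+\sqrt2)^p$; but the left side lies in $\Q(\sqrt5)$ and the right side in $\Q(\sqrt2)$, whose intersection is $\Q$, forcing $\phi^q\in\Q$, contradicting the irrationality of $\phi$. Hence the set of weights over periodic points cannot be contained in any $c\Z$, proving Assumption \ref{assump:nonarith}.
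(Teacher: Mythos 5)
Your proof takes essentially the same approach as the paper's: explicit formulas for the M\"obius inverse branches $h_j(x)=1/(j+x)$, a one-element Markov partition $\mathcal{P}=\{(0,1)\}$, and the two purely periodic points $[0;\overline{1}]=(\sqrt5-1)/2$ and $[0;\overline{2}]=\sqrt2-1$ to establish non-arithmeticity. You add a welcome justification that the paper omits — namely the field-intersection argument that $\phi^q\in\mathbb{Q}(\sqrt5)$ and $(1+\sqrt2)^p\in\mathbb{Q}(\sqrt2)$ force $\phi^q\in\mathbb{Q}(\sqrt5)\cap\mathbb{Q}(\sqrt2)=\mathbb{Q}$, which is impossible — where the paper merely asserts that $\log(1+\sqrt2)/\log\phi\notin\mathbb{Q}$ can be \emph{easily seen}.

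One small caveat on the contraction step: you correctly flag that $|h_1'(0)|=1$ so no single inverse branch contracts uniformly. However, your two-step bound $|h'(x)|\le\rho_0^{\lfloor n/2\rfloor}$ yields $\rho_0^0=1$ at $n=1$, so it does not literally produce $|h(x)-h(y)|\le\rho^n|x-y|$ for every $n\ge1$ with a single $\rho<1$; what it gives is $|h(x)-h(y)|\le C\rho^n|x-y|$ for a constant $C>1$ (equivalently, strict contraction only from $n\ge 2$). The paper makes the same silent concession — its estimate $|h'(x)|\ll(1/2)^n$ carries an implicit multiplicative constant — and this slack is harmless for every downstream use (Lasota--Yorke inequality, density of pre-images), but it is worth being aware that Assumption \ref{assmp:exp}.(1) as literally stated needs this small weakening for the Gauss map.
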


\begin{proof}
 For all positive integer $j$ and $I_j=(\frac{1}{j+1}, \frac{1}{j})$, we have $T(\overline{I_j})=I$ which immediately shows that $T$ is topologically mixing, so transitive. 

Using the explicit form  $h_j(x)=\frac{1}{j+x}$, it is not difficult to see the following by induction. For all $n \geq 1$ and $h \in \mathcal{H}^n$, we have for all $x \in I$ 
\[ |h'(x)| \ll \left(\frac{1}{2} \right)^n \ \mbox{and } \ \frac{|h''(x)|}{|h'(x)|} \ll M \]
for some $M>0$, which imply the Markov condition from Assumption \ref{assmp:exp} by simple applications of the mean value theorem.
One can also use the explicit form  of $h \in \mathcal{H}^n$, that is, 
\[
h(x)=\frac{p_{n-1}x+p_n}{q_{n-1}x +q_n}, \, \text{ where $p_n, q_n$ satisfy } \, \begin{bmatrix} p_{n-1} & p_n \\ q_{n-1} & q_n \end{bmatrix} =\begin{bmatrix} 0 & 1 \\ 1 & j_1 \end{bmatrix}  \cdots \begin{bmatrix} 0 & 1 \\ 1 & j_n \end{bmatrix},
\] 
which gives  $h'(x)= \frac{-1}{(q_{n-1}x +q_n)^2}$, that is, the derivative of  the (unimodular) homography $h$ is  provided by minus the  inverse of the square of  its  denominator.   (This is  a crucial property providing  a simple expression  of the  derivatives  of  the inverse branches in terms of denominators  of the  corresponding homographies,    that will also hold in the higher dimensional case.)

For the non-arithmeticity condition from Assumption \ref{assump:nonarith},  it is sufficient to consider  the quadratic numbers    with  purely periodic continued  fraction expansions   $ \frac{\sqrt 5-1}{2}= [0;1,\ldots, 1 ]$
and $ \sqrt 2-1= [0;2,\ldots, 2 ]$, which are respectively fixed points of the  inverse branches   $h_1= (1+x)^{-1}$ and $h_2= (2+x)^{-1}$,
which gives 
\[
\log \left| h_1'\left( \frac{\sqrt 5-1}{2}\right)\right|= -2 \log \left(\frac{\sqrt 5+1}{2}\right) \, \text{ and } \,  | h_2'( \sqrt 2-1)|= - 2 \log (1+\sqrt 2).
\]
We can then easily see that $\log  (1+ \sqrt2)/\log ((1+\sqrt 5)/2) \not  \in {\mathbb Q}$ concluding the proof.
\end{proof}

The quantity $N_j(x)$ counts  the number of occurrences of the prescribed partial quotient $j$ in the continued fraction expansion for
 all $x \in X$. Theorem \ref{thm.firstmoment} then gives
\[
\lim_{Q\to\infty} \frac{1}{\#\{p/q : 2\log q < Q\}} \sum_{2\log q < Q} \frac{N_j(p/q)}{2\log q}  = \Lambda_j,
\]
where $$\Lambda_j=  \frac{\int_{I_j} g(x)dx}{h(T)}= \frac{2}{h(T)\log 2} \log \frac{(j+1)^2}{j (j+2)}$$ and $g(x)=(\log 2 (1+x))^{-1}$ denotes the Gauss  invariant density for $(I,T)$. %
Indeed one has $$-\widetilde{\lambda}_s(1,0)=\int_I  \log | T'(x)|   g(x) dx=h(T)= \frac{\pi^2}{ 6 \log 2}$$ by Rohlin's formula  for  the entropy $h(T)$ of the Gauss map $T$,  and $$\widetilde{\lambda}_t(1,0)=\int_{I_j} g(x) dx=\frac{1}{\log 2} \log \frac{(j+1)^2}{j (j+2)}.$$

We recover the classical result on the number of steps in Euclid algorithm.
Moreover we can compare this with the classical result of L\'evy from 1937 who showed that for Lebesgue typical points $x$  the frequency of the digit $j$ in the continued fraction expansion is $\frac{1}{\log 2} \log \frac{(j+1)^2}{j (j+2)}$. 


Theorem \ref{thm.mdclt:intro} gives that for the distinct $j_1, \ldots, j_d$, a random vector $\overline \varphi=(\varphi_1, \ldots, \varphi_d)$, where each $\varphi_i(x) = N_{j_i}(x) - w(x)\Lambda_{j_i} $, has limit Gaussian distribution with the covariance matrix  given by the identity matrix. This is because there is no correlation among admissible digits, i.e., we have $T(\overline{I_j})=I$ for all $j \in J$.

\subsection{Multidimensional continued fractions}  \label{subsec:mcf}
 We consider unimodular continued fraction algorithms such as  defined in \cite{Lagarias:93}; See also \cite{Schweiger:00}.  These algorithms 
associate with some  given vector   an infinite sequence of matrices with determinant $\pm$1, and one can consider the quality of convergence  of this product of matrices which correspond to the contraction property in Assumption \ref{assmp:exp}.(1).
 In this setting, the inverse branches  are  homographies  (as in the regular continued fraction case) and the Jacobian determinant has a simple formulation in terms of the denominator of the 
 homography (See e.g., \cite[Proposition~5.2]{Veech:78}), which allows one  to deduce easily   the bounded distortion in Assumption  \ref{assmp:exp}.(2). 

\subsubsection{Brun's algorithm}\label{ex:Brun}
We consider
here  the continued fraction   version of the Brun GCD algorithm discussed  in the introduction.
This is   one of  the  most classical  multidimensional continued fraction algorithms (See \cite{Brun19,Brun20,BRUN,Schweiger:00} and  \cite{BLV:18} for  mean results  on rational trajectories). The $m$-dimensional \emph{Brun algorithm}  $T_{\rm B}:[0,1]^m \to [0,1]^m$  is
defined  for  $(x_1, \ldots, x_m) \in [0,1]^m$  
by
$$
T_{\rm B}(x_1, \ldots, x_m)
=\begin{cases}  \left(  \frac{x_{i+1}}{x_i}, \ldots, \frac{x_m}{x_i }, \ \left\{ \frac{1}{x_i}\right\}, \frac{x_{1}}{x_i}, \ldots,  \frac{x_{i - 1}}{x_i}
\right)& \hbox{ if } x_i \neq 0\\
 (0,\ldots, 0)&  \hbox{ otherwise},
 \end{cases}
$$
where $x_i = \max_{k} \{x_k\}$. Its density function is  explicitly given in \cite{Schweiger79,Arnoux-Nogueira} by 
\[ 
\sum_{\sigma \in \mathfrak{S}_{m}} \prod_{i=1}^{m}  \frac{1}{1+x_{\sigma(1)}+\ldots+x_{\sigma(i)}},
\]
where $ \mathfrak{S}_m$   is the set of permutations on  $m$ elements. This transformation  satisfies  all of the required  hypotheses. 

\begin{proposition}
Brun's algorithm satisfies the Markov and Non-arithmeticty assumptions.
\end{proposition}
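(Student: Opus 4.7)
The plan is to verify the three items of Assumption~\ref{assmp:exp} together with Assumption~\ref{assump:nonarith} for the Brun map $T_{\rm B}$ on $I=[0,1]^m$. The key structural observation is that every inverse branch of depth $n$ is a homography $h(y)=M\cdot[y:1]$ associated with a product $M\in\mathrm{GL}_{m+1}(\mathbb{Z})$ of $n$ non-negative elementary matrices, whose Jacobian takes the explicit form $|J_h(y)| = 1/L_h(y)^{m+1}$, where $L_h(y)$ is a positive linear form in $(y_1,\dots,y_m,1)$ whose coefficients appear as one of the rows of $M$. All three conditions in Assumption~\ref{assmp:exp} can then be read directly off this formula and the combinatorics of $M$.

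For topological transitivity, I would invoke the classical ergodicity of $T_{\rm B}$ with respect to the absolutely continuous invariant measure whose density was displayed above (Schweiger, Arnoux--Nogueira); since the density is positive on $[0,1]^m$, ergodicity forces the existence of a point with a dense orbit. For the finite Markov partition in Assumption~\ref{assmp:exp}(3), I would take $\mathcal{P}=\{P_i\}_{1\le i\le m}$, where $P_i$ is the cell on which $x_i$ is the maximal coordinate. Each cylinder $I_{(i,a)}$, labelled by the position $i$ of the maximum and by the partial quotient $a\ge 1$, sits in $P_i$; its image $T_{\rm B}(I_{(i,a)})$ decomposes as a finite union of closures $\overline{P_{i'}}$, the indices $i'$ being determined by the combinatorial position at which the new entry $\{1/x_i\}$ inserts itself among the normalised coordinates $x_k/x_i$. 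Bounded distortion (Assumption~\ref{assmp:exp}(2)) follows at once from the Jacobian formula: since $\log|J_h(y)|=-(m+1)\log L_h(y)$, its gradient has modulus bounded by a constant independent of the depth $n$ and of $h$, because $\nabla L_h$ and $L_h$ scale identically in the entries of $M$ on the cylinder $h(\mathrm{Dom}(h))$.

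The main obstacle, and the step on which I would spend the most effort, is the uniform contraction in Assumption~\ref{assmp:exp}(1). In contrast to the Gauss map, a single Brun step is not strictly contracting, so one cannot argue branch by branch. Instead one must exploit the product structure. The operator-norm estimate
\[
\|Dh(y)\| \;\le\; C\,\frac{\|M\|}{L_h(y)^2},
\]
combined with the lower bound $L_h(y)\ge c\,\|M\|$ valid on the cylinder $h(\mathrm{Dom}(h))$, reduces the task to producing an exponential lower bound $\|M\|\ge c\,\rho^{-n}$ that is uniform over admissible products of depth $n$. This is a positivity statement about products of elementary Brun matrices: after a bounded number of admissible steps (dictated by the Markov partition) such products become strictly positive matrices mapping the positive cone of $\mathbb{R}^{m+1}$ into its interior, and Birkhoff's theorem on Hilbert's projective metric then delivers the required uniform exponential growth of the top singular value. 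An alternative route is to invoke the exponential shrinking of Brun cylinders established by Broise--Alayrac and Schratzberger.

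For non-arithmeticity (Assumption~\ref{assump:nonarith}), it suffices to exhibit two periodic points of $T_{\rm B}$ whose weights have irrational ratio. Each periodic orbit of length $n$ is the fixed point of some $h\in\mathcal{H}^n$, with weight equal to $(m+1)\log\lambda$, where $\lambda$ is the Perron eigenvalue of the associated matrix $M$. I would choose two period-one orbits corresponding to distinct admissible branches $(i,a)\neq(i',a')$ whose Perron eigenvalues are explicit quadratic units, and verify that the ratio of their logarithms is irrational by direct inspection of the two minimal polynomials, in the same spirit as in the Gauss-map computation in Section~\ref{subsec:cfGauss}. This would complete the verification of both assumptions for Brun's algorithm.
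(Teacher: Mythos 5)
The broad strategy is the same as the paper's (Jacobian formula for distortion, Hilbert projective metric for contraction, comparison of periodic-orbit weights for non-arithmeticity), but you miss the single most simplifying observation and make one concrete error in the non-arithmeticity step.

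The observation you miss is that the Brun map as given here is a \emph{full branch} map on $(0,1)^m$: each cylinder $I_{(i,a)}$ (position $i$ of the maximum, partial quotient $a$) is mapped \emph{onto} the whole cube, because the entry $\{1/x_i\}$ and the remaining ratios $x_k/x_i$ can be prescribed independently. Consequently the Markov partition $\mathcal{P}$ in Assumption~\ref{assmp:exp}(3) can be taken to consist of a single element $(0,1)^m$, and topological transitivity is immediate. Your proposed $m$-cell partition by position of the maximum does satisfy Assumption~\ref{assmp:exp}(3) formally, so it is not wrong, but your remark that $T_{\rm B}(I_{(i,a)})$ decomposes as a union of certain cells ``determined by the combinatorial position at which $\{1/x_i\}$ inserts itself'' suggests you believe the image is a proper subcollection of cells — it is not, it is the whole cube. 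Relatedly, appealing to ergodicity of the a.c.i.m.\ for transitivity is a much heavier tool than needed.

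The genuine error is in the non-arithmeticity step: you propose comparing period-one orbits ``whose Perron eigenvalues are explicit quadratic units.'' For $m\ge 2$ the fixed point of an inverse branch $h_{(i,a)}$ is governed by an algebraic number of degree $m+1$, not $2$; the paper takes the root $\tau_m\in(0,1)$ of $x^{m+1}+x-1=0$ (giving the purely periodic point $(\tau_m^m,\ldots,\tau_m)$, weight $(m+1)\log(1+\tau_m)$) and the root $\varrho_m$ of $x^{m+1}+2x-1=0$ (weight $(m+1)\log(2+\varrho_m)$), and argues that $\log(1+\tau_m)$ and $\log(2+\varrho_m)$ are rationally independent. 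Your plan — two period-one orbits with distinct partial quotients, compare logs of denominators — is the right one, but the ``quadratic unit'' description is incorrect and you should be explicit that the minimal polynomials have degree $m+1$. Finally, your caveat about single-step contraction is a fair observation about the precise phrasing of Assumption~\ref{assmp:exp}(1); the paper resolves this simply by citing the Hilbert-metric contraction result of \cite{BAG:01}, which is exactly the route you sketch.
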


\begin{proof} The  partition $(I_j)$  is  indexed  by the  set  $J=\N$ of positive integers.
Inverse  branches in ${\mathcal H}$  are homographies  of the  form 
\begin{equation}\label{eq:Brunh}
 \left( \frac {x_{i+1}} {j+ x_i},
 \ldots,   \frac {x_{m}} {j + x_i},  \frac 1 {j + x_i}, \frac {x_{1}} {j + x_i}, \ldots,   \frac {x_{i-1}} {j + x_i}\right)
 \end{equation}
 where $x_i = \max_{k} \{x_k\}$ and $j =\lfloor 1/x_i\rfloor$. This is a full branch algorithm so that $\mathcal{P}$ consists of a single element $(0,1)^m$. Transitivity follows immediately. The contraction  is  proved by  exhibiting using
  the Hilbert distance a suitable  metric that is  contracted by the homographies
(See \cite[Annexe]{BAG:01}).   Distortion properties are proved thanks to the  expression of the  Jacobian as 
$|J_h(x)|= \frac{1}{(j+x_i)^{m+1}}$  for $h$ as in  (\ref{eq:Brunh}).


For the  non-arithmeticity, consider  the  unique root  $\tau_m$   with  $0< \tau_m <1$  of $x^{m+1}+x-1=0$, and  the unique root $\varrho_m$ 
with  $0< \varrho_m <1$  of $x^{m+1}+2x-1=0$,
for $ m \geq 2$. The algebraic number $(\tau_m^m ,  \tau_m^{m-1}, \ldots, \tau_m)$ has  purely periodic expansion. Indeed one has 
 $$T_B (\tau_m^m ,  \tau_m^{m-1}, \ldots, \tau_m)= (\{1/\tau_m\},\tau_m{m-1},   \ldots, \tau_m)= (\tau_m ^m ,  \tau_m^{m-1}, \ldots, \tau_m).$$
  Similarly,  one checks  that  $(\varrho_m^m ,  \varrho_m^{m-1}, \ldots, \varrho_m)$
  has   purely periodic expansion. One then  observes  that 
  $\log(  1 + \tau_m)$ and   $\log(  2 + \varrho_m)$ are rationally independent. We conclude by  recalling that the  Jacobians  are  given  by  powers of the denominators of the homographies in $\mathcal{H}$. 
\end{proof}

\subsubsection{Jacobi--Perron algorithm}  \label{subsec:JP}
The  Jacobi--Perron algorithm   is  also one of the most famous multidimensional continued fraction algorithms;  See \cite{Bernstein:71,Heine1868,Perron:07,Schweiger:73} or ~\cite[Chapter~4]{Schweiger:00}.  
There is no known     expression of    the  absolutely continuous invariant measure  of the  Jacobi--Perron algorithm, however its (piecewise) analyticity has been established in \cite{broise}. However note that with our  approach  we do not need to have  an explicit  form  of  its absolutely continuous invariant measure.

Let $ m \geq 2$.  The $m$-dimensional \emph{Jacobi-Perron algorithm}  $T_{\rm JP}:[0,1]^m \to [0,1]^m$  is
defined as

$$
T_{\rm JP}(x_1, \ldots, x_m)
=\begin{cases}  \left( \left\{ \frac{x_2}{x_1} \right\},
 \left\{ \frac{x_3}{x_1}\right\}, \ldots , 
 \left\{ \frac{1}{x_1}\right\}\right)  & \hbox{ if } x_1 \neq 0\\
 (0,\ldots, 0)&  \hbox{ otherwise}.
 \end{cases}
$$
The inverse branches are of the form 

$$
 \left( \frac {1} {a_m+ x_m}, \frac {x_1+a_{1}} {a_m+ x_m},
 \ldots,   \frac {x_{m-1}+a_{m-1}} {a_m + x_m}\right)$$
 where $a_i =  \lfloor x_{i+1}/x_1 \rfloor $ for $1 \leq i \leq m-1$,  and $a_m=  \lfloor  \frac{1}{x_1} \rfloor $.

We check that this transformation  satisfies  all of the required hypotheses.

\begin{proposition}
The Jacobi-Perron algorithm satisfies the Markov and Non-arithmeticty assumptions.
\end{proposition}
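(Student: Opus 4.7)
The plan is to parallel exactly the proof given for Brun's algorithm, adapting each ingredient to the Jacobi--Perron setting. The inverse branches are the unimodular homographies
\[
h_{(a_1,\ldots,a_m)}(x_1,\ldots,x_m) = \left( \frac{1}{a_m+x_m}, \frac{x_1+a_1}{a_m+x_m}, \ldots, \frac{x_{m-1}+a_{m-1}}{a_m+x_m}\right),
\]
indexed by $J = \{(a_1,\ldots,a_m) \in \mathbb{Z}_{\ge 0}^m : a_m \ge 1,\ 0 \le a_i \le a_m \text{ for } i<m\}$, with Jacobian
\[
|J_{h}(x)| = \frac{1}{(a_m+x_m)^{m+1}}.
\]
For the contraction in Assumption \ref{assmp:exp}(1), I would use that each $h_{(a_1,\ldots,a_m)}$ is a projective map preserving a positive cone on which the Hilbert metric is strictly contracted; since the Euclidean metric and the Hilbert metric are comparable on any compact subset of the open cone, this yields a uniform exponential rate $\rho < 1$ for compositions of depth $n$, exactly as for Brun (cf.\ the argument in \cite{BAG:01}). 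Bounded distortion then follows from the explicit Jacobian formula: $\log|J_{h_j}|$ is Lipschitz with a uniform constant on the image of any inverse branch, so by the chain rule and the geometric contraction just obtained, $\log|J_h|$ for $h \in \mathcal{H}^n$ has Lipschitz constant controlled by $\sum_i C\rho^i < \infty$, which gives Assumption \ref{assmp:exp}(2).

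For the Markovian Assumption \ref{assmp:exp}(3), the subtlety compared to Brun is that Jacobi--Perron is \emph{not} full branch: the image of $h_{(a_1,\ldots,a_m)}$ only meets $[0,1]^m$ under the admissibility constraints $0 \le a_i \le a_m$. I would take $\mathcal{P}$ to be the finite partition of $[0,1]^m$ cut out by the coordinate inequalities that govern these constraints (as presented in \cite[Chapter 4]{Schweiger:00}); compatibility with $\{I_j\}_{j \in J}$ reduces to a direct inspection of where $h_j(P_a)$ lands in terms of which coordinate is largest. Topological transitivity then follows from the classical ergodicity of the Jacobi--Perron transformation with respect to its absolutely continuous invariant measure established in \cite{broise}, together with the fact that the invariant density has full support, so some point has dense forward orbit and hence some point has dense backward orbit under our branch structure.

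For Assumption \ref{assump:nonarith}, I imitate the Brun argument by producing two periodic points with rationally independent weights. The inverse branches corresponding to $(0,\ldots,0,1)$ and $(0,\ldots,0,2)$ each have a unique fixed point in $(0,1)^m$; for the first, the last coordinate $\tau_m$ satisfies the algebraic relation $(1+\tau_m)^m \tau_m = 1$, and for the second, $(2+\varrho_m)^m \varrho_m = 1$. The associated weights at these periodic orbits are, using the Jacobian formula,
\[
w(\tau) = (m+1)\log(1+\tau_m) \quad \text{and} \quad w(\varrho) = (m+1)\log(2+\varrho_m),
\]
so non-arithmeticity reduces to showing that $\log(1+\tau_m)/\log(2+\varrho_m) \notin \mathbb{Q}$. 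This is where the main obstacle lies: one must rule out any multiplicative relation between the algebraic numbers $1+\tau_m$ and $2+\varrho_m$. In dimension $m=2$ this can be checked directly by comparing the minimal polynomials; for general $m$ I would argue by contradiction using the Gelfond--Schneider (or more generally Baker's) theorem on linear independence of logarithms of algebraic numbers, combined with the observation that $1+\tau_m$ and $2+\varrho_m$ satisfy genuinely distinct minimal polynomials of degree $m+1$, so they cannot be multiplicatively dependent unless they share a common algebraic structure, which the explicit form of their defining equations precludes.
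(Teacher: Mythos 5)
Your handling of the Markov assumptions broadly matches the paper's: contraction via the Hilbert metric following \cite{BAG:01}, distortion via the explicit Jacobian $|J_h|=(a_m+x_m)^{-(m+1)}$, and the finite Markov partition $\mathcal{P}$ indexed by the coordinate orderings $P_\sigma=\{0\le x_{\sigma(1)}\le\cdots\le x_{\sigma(m)}\le 1\}$ as in \cite[Proposition 8]{Schweiger:00} and \cite[Prop. 2.12]{broise}. Your route to topological transitivity is different: you invoke ergodicity of the absolutely continuous invariant measure from \cite{broise} and full support of the density to conclude existence of a dense orbit. The paper argues far more directly: $T_{\rm JP}^m(\overline{I_j})=I$ for every cylinder $I_j$, which gives topological mixing immediately without appealing to the ergodic theory. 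Your argument is valid but overshoots; the direct Markov-covering observation is all that is needed.

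The genuine gap is in the non-arithmeticity. Your idea of taking the fixed points of the depth-one branches with digit vectors $(0,\ldots,0,1)$ and $(0,\ldots,0,2)$ is natural, and your computation of the defining relations $(1+\tau_m)^m\tau_m=1$, $(2+\varrho_m)^m\varrho_m=1$ and of the weights $(m+1)\log(1+\tau_m)$, $(m+1)\log(2+\varrho_m)$ is correct. But the step where you conclude that these logarithms are rationally independent is not a proof. Gelfond--Schneider (or Baker's theorem) only tells you that $\log(1+\tau_m)/\log(2+\varrho_m)$ is either rational or transcendental; it gives no tool to exclude rationality, which is exactly what must be shown. To exclude it you must prove $1+\tau_m$ and $2+\varrho_m$ are multiplicatively independent, and your claim that having distinct minimal polynomials of degree $m+1$ prevents multiplicative dependence is false in general (for instance $\sqrt{2}$ and $2$ have distinct minimal polynomials yet $(\sqrt 2)^2=2$). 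The paper circumvents this entirely by citing \cite{PLRD}: every real number field $\mathbb{K}$ of degree $m+1$ contains a vector with purely periodic Jacobi--Perron expansion for which $(1,x_1,\ldots,x_m)$ is a $\Q$-basis of $\mathbb{K}$. Choosing periodic points in two genuinely distinct degree-$(m+1)$ fields, the denominators of the corresponding homographies are elements of those distinct fields, and a rational dependence of their logarithms would force an algebraic relation incompatible with the choice of fields. You would need to replace your Gelfond--Schneider step with this (or with a concrete verification of multiplicative independence for your specific $\tau_m,\varrho_m$).
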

\begin{proof}
This algorithm is not a full branch map but is topologically mixing since we have $T_{\rm JP}^m(\overline{I_j})=I$, where the partition is indexed by the proper subset $J$ of $\Z_{\geq 0}^m$ that is described below. Hence the transitivity follows immediately.

The contracting property from Assumption \ref{assmp:exp}.(1) is given in  \cite{broise}; See also  \cite{BAG:01}.  
To check (2), one  again uses the fact that the Jacobian determinant has  a convenient  expression since  we have homographies.
Regarding (3), the map $T_{\rm JP}$ satisfies explicit   Markov  assumptions; See e.g., \cite[Proposition 8]{Schweiger:00}
and \cite[Prop. 2.12]{broise}. The Markov  partition is  given by the  following set  indexed by permutations $\sigma$ on  $m$ elements:
$$P_{\sigma}=\{  (x_1, \ldots, x_m)\in [0,1]^m :0 \leq x_{\sigma(1)} \leq \ldots \leq x_{\sigma(m)} \leq 1 \}.$$ 

We give an explicit description of the above facts in the case of $m=2$.
The map  $T_{\rm JP}$  satisfies $$
T_{\rm JP}(\xi,\eta) =
\begin{cases}
(\{\frac{\eta}{\xi} \},\{ \frac{1}{\xi} \})& \hbox{ if } \xi \neq 0\\
(0, \eta) &  \hbox{ if } \xi=0 .
\end{cases}
$$
Denote by $I_{a,b}$, for $(a, b) \in \mathbb{Z}^2$ with $0 \leq a \leq b$ and $b \geq 1$, the sets 
\[ I_{a,b}=
\begin{cases}
\{ (\xi, \eta) \in (0,1)^2: 1/(b+1) < \xi < 1/b , \  a \xi < \eta < (a+1)\xi  \} &  \hbox{ if } a \neq b \\
\{ (\xi, \eta) \in (0,1)^2: 1/(b+1) < \xi < 1/b , \  a \xi < \eta < 1  \}  &  \hbox{ if } a = b,
\end{cases}
\]
 which form a disjoint partition for $[0,1]^2$. The map $T_{\rm JP}$ is not a full branch map and $\mathcal P$ consists of two elements   $P_1= \{(\xi,\eta ) \in [0,1]^2: \xi<\eta\}$ and $P_2= \{(\xi,\eta ) \in [0,1]^2: \xi > \eta\}$. More precisely, we have 
$$
T_{\rm JP}(I_{a,b}) =
\begin{cases}
(0,1)^2 & \hbox{ if } a \neq b\\
P_1 &  \hbox{ if } a=b.
\end{cases}
$$

Let $h_{a,b}:  T_{\rm JP}(I_{a,b}) \to I_{a,b}$ be the inverse branch associated with  $I_{a,b}$. For $(\xi, \eta) \in (0,1)^2 \cap \mathbb{Q}^2$, we have an admissible sequence $(a_1, b_1), \ldots , (a_n, b_n)$ with $b_n \geq 2$ such that for all $i \in \{1, \ldots, n-1\}$
\[ T_{\rm JP}^{i-1}(\xi, \eta) \in I_{a_i, b_i}, \ T_{\rm JP}^{n-1}(\xi, \eta) \in \overline{I_{a_n, b_n}},   \mbox{ and }
T_{\rm JP}^n(\xi, \eta)=(0, 0).\] This yields 
\[ (\xi, \eta)= h_{a_1, b_1} \circ \ldots \circ h_{a_n, b_n} (0, 0).
\]
In the particular case that $x_0=(0,0)$, we thus can identify $X = \mathbb Q^2$ and the image of $x_0$ under the elements in $\mathcal H^n$ corresponds to a pair of rationals $(\frac{p}{q}, \frac{r}{q})$ with $(p,q,r)=1$. Moreover, the associated weight becomes $w(p/q, r/q)=3 \log q$.

Next, the non-arithmeticity is proved by comparing two  denominators for   convergents  of   periodic orbits as in  the  previous cases.
Indeed, by \cite{PLRD},  in every real number field  ${\mathbb K}$ of degree $m+1$, there exists  $(x_1, \ldots,x_m) \in (0,1)^m$
having a purely periodic  Jacobi--Perron  expansion  such that $(1,x_1,\ldots,x_m)$ is a basis of ${\mathbb K}$.
Consider two   distinct real number field  ${\mathbb K}$  and  ${\mathbb K}'$, and $(x_1, \ldots,x_m) $, $(x'_1, \ldots,x'_m)$  having 
purely periodic  Jacobi--Perron  expansions,  with  $(1,x_1,\ldots,x_m)$ being  a basis of ${\mathbb K}$ and $(1,x'_1,\ldots,x'_m)$ being  a basis of ${\mathbb K}'$.
The  Jacobians  are  given by the common  denominator  of the   homographies involved  in the inverse branches.  The denominators  of the  homographies in ${\mathcal H}^*$ are of the form 
$ q_{n-m}  x_1  + \ldots  + q_{n-1} x_m +q_n$  with $(q_n)_n$  taking positive  integer  values,
and similarly  for   $(1,x'_1,\ldots,x'_m)$,  the denominators  of the  homographies   are  of the form 
$ q'_{n'-m}  x'_1  + \ldots  + q'_{n'-1} x'_m +q'_{n'}$  with $(q'_n)_n$  taking integer  values.
One sees that  
\[
\frac{\log  ( q'_{n'-m}  x'_1  + \ldots +q'_{n'-1} x'_m +q'_{n'})}{ \log  ( q_{n-m}  x_1  + \ldots q_{n-1} x_m +q_n)} \not \in {\mathbb Q}
\]
since  $(1,x_1,\ldots,x_m)$ and $(1,x_1,\ldots,x_m)$ belong to the distinct field extensions.
\end{proof}

\subsection{Remarks} \label{subsec:rem}

Though we have restricted our focus to the generalised continued fraction maps in higher dimensions, the same ideas might apply to a wide family of instances such as suitable accelerations of the Rauzy--Veech map  in the setting of interval exchanges, or induced Bowen--Series maps (e.g., Romik's map \cite{Romik}, Rosen's map \cite{Rosen}). In these examples, Assumption \ref{assmp:exp} is known or easily verifiable, however, Assumption \ref{assump:nonarith} has to be checked case by case as our space $X$ and periodic orbits are not completely characterisable (cf. \cite{Sch} for ``Rosen's cusp challenge"), hence the above tricks might not work for some cases.

We stress the fact there are some algorithms for which the current method would not apply. For example, Selmer's algorithm does not admit natural accelerations and the invariant density is unbounded  by \cite[Chapter 7]{Schweiger:00}, while our transfer operators need the density  eigenfunction to lie in the  set of Lipschitz bounded functions. We refer to Cantrell--Pollicott \cite{CanPol} and \cite{CanPol2} for other geometric contexts where the Dolgopyat estimate is not accessible. 

As a final remark, our motivation also comes from the applications to number theory. We recall the recent progress by Bettin--Drappeau \cite{BetDra} and Lee--Sun \cite{LeeSun} on Mazur--Rubin's conjectural statistics on modular symbols and $L$-functions of $\GL(2)$, where they presented dynamical proofs by generalising \cite{baladi-vallee}. In particular, the main technical difficulties heavily depend on verifying the Dolgopyat estimates and topological mixing property of a variant of the Gauss map. Our result overcomes this point in a precise way, hence would yield simpler proofs. We expect further extensions in this direction.


\section{Spectral properties of transfer operators} \label{sec:dynamics}

We first derive from  our assumptions  basic  spectral properties of  the  transfer operators associated with the dynamical   system $(I,T)$.

Let $\mathrm{Lip}(P_a)$ be the space of functions $f:P_a \rightarrow \mathbb{C}$ which can be extended to Lipschitz functions on the compact closure $\overline{P_a}$. Any $f \in \mathrm{Lip}(P_a)$ has a Lipschitz constant 
\[ \mathrm{Lip}(f|_{P_a})=\sup_{x \neq y \in P_a} \frac{|f(x)-f(y)|}{|x-y|} . \]

\begin{definition} \label{def:lip}
We define  the space of functions 
$B :=\oplus_{a \in A} C^{1+\mathrm{Lip}}(P_a)$
with the norm 
\[ \|f\| := \| f \|_\infty + \max_{a \in A} \mathrm{Lip}(f|_{P_a})  \]
where $ \| f \|_\infty=\sup_{x\in I} |f(x)|$.
\end{definition}

We note that $B$  is a Banach space because $\mathrm{Lip}( P_a)$ is a Banach space for any $a \in A$ using the fact that any function in $\mathrm{Lip}(P_a)$ has a unique extension to the closure $\overline{P_a}$ with the same Lipschitz constant. Further, the norm $\| \cdot \|$ is pre-compact for the topology of the norm $\| \cdot \|_\infty$ by the Arzela--Ascoli Theorem.

To prove Proposition \ref{prop.analyticity} we introduce families of transfer operators. Let $s \in \mathbb{C}$ and $t \in \mathbb{C}^d$. To proceed, we define an extended notion of \eqref{eq:weight} as follows. For $h \in \mathcal{H}^\ast$ and $y \in \mathrm{Im}(h)$, set
\[ w(y):=-\log|J_h(x)| , \quad y=h(x), x \in \mathrm{Dom}(h)  \]
by abusing the notation, as for all $y \in X$ we always have $x=x_0$.

\begin{definition}
Let $\widetilde{\mathcal{L}}_{s,t} : B \to B$ be the transfer  operator defined by
\[
\widetilde{\mathcal{L}}_{s,t}f(x) = \sum_{y : y = h(x) \atop h \in \mathcal{H}, x \in \mathrm{Dom}(h)} e^{s w(y)+ \langle t, \overline N(y) \rangle} f(y).
\]
We denote by $\widetilde{\mathcal{L}}_{s,t}^\sharp$ the same operator defined by the inverse branches corresponding to $\mathcal{H}^\sharp \subset \mathcal{H}$ for the unique representation for $x \in X$ in case that there is a duplication. 
\end{definition}

\begin{remark}
Recall Example \ref{ex:intro:gauss} that for the Gauss map, it is given by 
\[ \mathcal{H}^\sharp = \{ h_j : j \geq 2 \} \]
which corresponds to the unique terminating condition for finite continued fraction expansions of the rational. In other cases, it is possible to have $\mathcal{H}^\sharp=\mathcal{H}$.
\end{remark}

It will be convenient to normalise the transfer operator. We use the centred  counting function $\overline{\varphi }$ defined in (\ref{eq:phibar}), that is, $\overline{\varphi}(x) = \overline{N}(x) - w(x)\overline{\Lambda}$, where $\overline{\Lambda} = (\Lambda_{j_1}, \ldots, \Lambda_{j_d}) \in \R^d.$
\begin{definition}\label{def:normalised}
We define  the normalised transfer operator $\mathcal{L}_{s,t} : B \to B$ as  the operator defined by
\[
\mathcal{L}_{s,t}f(x) = \sum_{y : y = h(x) \atop h \in \mathcal{H}, x \in \mathrm{Dom}(h)} e^{s w(y)+ \langle t, \overline{\varphi}(y) \rangle} f(y).
\]
Similarly we denote by $\mathcal{L}_{s,t}^\sharp$ the final operator corresponding to $\mathcal{H}^\sharp$.
\end{definition}

By the choice of our weight $w(\cdot)$ given by the Jacobian determinant, one can easily see that our transfer operators converge for $\mathfrak{Re}(s)$ and $\mathfrak{Im}(t)$ belonging to a real neighborhood  of $(1,0)$
by Assumption \ref{assmp:exp}.(1) together with the use of the mean value theorem.

\begin{remark} \label{rmk:cell}
In \cite{kim2025}, they identified $\mathcal{P}$ as a CW-complex as follows. For $0 \leq i \leq m=\mathrm{dim}(I)$, let $\mathcal{P}[i]$ be the set of open cells of real dimension $i$. Then one has a finite structure $\mathcal{P}=\cup_{i=0}^m \mathcal{P}[i]$ that gives a  decomposition of the function space $B=\oplus_{i=0}^m B(\mathcal{P}[i])$ equipped with  reasonable norms and of the operator $\mathcal{L}:=\mathcal{L}_{s,t}$ as a lower-triangular matrix 
\[  \Lc = \begin{bmatrix}
\Lc_m^m & 0 & \cdots & 0  \\
\Lc_m^{m-1} & \Lc_{m-1}^{m-1} & \cdots &0  \\
\vdots & \vdots & \vdots & \vdots  \\
\Lc_m^0 & \Lc_{m-1}^0 & \cdots  & \Lc_0^0 
\end{bmatrix} \]
where $\mathcal{L}^i_k: B(\mathcal{P}[k]) \rightarrow B(\mathcal{P}[i])$ with $0 \leq i,k \leq m$ is the component operator.  This allows the boundaries of $P$'s, $P \in \mathcal{P}$ when $|\mathcal{P}|>1$, to be taken into account, which means the elements in $B$ are honest functions rather than equivalence classes of them. 

For complex continued fraction maps ($m=2$) and transfer operators acting on $C^1$-functions, they obtained spectral properties that enable to deduce the counting statistics without multiplicity. We speculate that a similar argument would apply to our setting for general expanding maps on $\R^m$ and analysis on Lipschitz functions. In our setting, we can track the multiplicities thanks to Assumption \ref{assmp:exp}.(3); $I_j \subset P_a$, so we prefer not to try to work in greater generality.
 \end{remark}

To  establish  the  required properties on the spectrum of $\widetilde{\mathcal{L}}_{s,t}$,  we first state the following result on the  density of backward orbits.

\begin{lemma} \label{lem:density}
For any $x_0 \in I$ we have that
\[
\overline{\bigcup_{n \ge 1} \mathcal{H}^n(x_0)} = I,
\]
that is, the collection of pre-images of $x_0$ under $T$ is dense in $I$.
\end{lemma}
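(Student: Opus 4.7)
The strategy is to combine two ingredients. First, by the uniform contraction in Assumption \ref{assmp:exp}.(1), every cylinder $I_{j_1,\ldots,j_n}$ has diameter at most $\rho^n\,\mathrm{diam}(I)$, so depth-$n$ cylinders shrink uniformly to points as $n \to \infty$. Combined with the fact that cylinders of depth $n$ partition $I$ modulo a null set, this means that the cylinders form a basis for the topology on $I$: any open $U \subset I$ contains an open cylinder $I_{j_1,\ldots,j_n}$ with $\overline{I_{j_1,\ldots,j_n}} \subset U$. It thus suffices to construct, for any such cylinder and the given $x_0$, an inverse branch $h \in \mathcal{H}^{\ast}$ with $h(x_0) \in I_{j_1,\ldots,j_n}$.

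The plan is to extend the admissible word $(j_1,\ldots,j_n)$ to $(j_1,\ldots,j_N)$ using the Markov condition Assumption \ref{assmp:exp}.(3) so that $T^N(I_{j_1,\ldots,j_N}) = T(I_{j_N})$ contains $x_0$. Since $I_{j_1,\ldots,j_N} \subset I_{j_1,\ldots,j_n}$, applying the composed inverse branch $h = h_{j_1}\circ\cdots\circ h_{j_N}$ to $x_0$ then places $h(x_0)$ inside $\overline{I_{j_1,\ldots,j_N}} \subset \overline{I_{j_1,\ldots,j_n}} \subset U$. To carry this out, I would introduce the directed graph on the finite set $A$ whose edges $a \to b$ correspond to pairs for which there exists $j \in J$ with $I_j \subset P_a$ and $\overline{P_b} \subset T(I_j)$. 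The Markov condition says exactly that a word $(j_1,\ldots,j_N)$ produces a non-empty cylinder if and only if the induced sequence of labels in $A$ is a path in this graph, and that at each vertex one is free to choose any outgoing edge.

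The key step is to show that this graph on $A$ is strongly connected, which is where topological transitivity enters. Let $y_0$ be a point with dense forward $T$-orbit; then its orbit visits every open $P_a$, and the forward orbit of each iterate $T^m(y_0)$ is likewise dense. Reading off the symbolic itinerary, one obtains an admissible one-sided path in the graph that visits every vertex of $A$ infinitely often, and this is possible only if the graph is strongly connected, as otherwise the union of closures of partition elements inside a proper forward-invariant component would trap the tail of the orbit and contradict density in its complement. Granted strong connectivity, choose a partition element $P_{a^\ast}$ with $x_0 \in \overline{P_{a^\ast}}$ and any $b_0 \in A$ with $\overline{P_{b_0}} \subset T(I_{j_n})$; pick a path $b_0 \to b_1 \to \cdots \to b_k = a^\ast$ in the graph, realise each edge by an admissible label $j$, and set $j_{n+1},\ldots,j_N$ accordingly, with $N = n+k$.

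The main obstacle is precisely this derivation of strong connectivity from topological transitivity: one must rule out proper trapping components using only a single dense orbit, and take care that the boundary of the partition $\mathcal{P}$, though of null measure, does not corrupt the symbolic encoding of $y_0$. A minor technical point is that $x_0$ may lie on $\partial P_{a^\ast}$, in which case the conclusion $h(x_0) \in \overline{I_{j_1,\ldots,j_N}} \subset U$ relies on the continuous extension of the inverse branches to the closures of their domains and on the strict inclusion $\overline{I_{j_1,\ldots,j_n}} \subset U$ arranged in the first step.
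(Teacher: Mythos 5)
Your argument takes a genuinely different and considerably longer route than the paper's. The paper's proof is direct: pick $n$ large so that the cylinders $I_{j_1,\ldots,j_n}\ni x_0$ and $I_{k_1,\ldots,k_n}\ni x$ both have diameter $\le \epsilon/2$; topological transitivity applied to the interiors of these two cylinders produces a depth-$N$ inverse branch $h$ and a point $y$ with $y$ and $T^N y$ landing in opposite cylinders, so that $|h(x_0)-x|\le |h(x_0)-y|+|y-x|\le\epsilon$ by uniform contraction and the small diameters. No graph on $A$, no strong connectivity, and no explicit word-extension are needed. You instead establish irreducibility of the Markov transition graph and then manufacture an explicit admissible extension $(j_1,\ldots,j_N)$ whose cylinder is carried onto a target domain containing $x_0$. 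This proves a stronger, purely combinatorial statement (each nonempty cylinder contains a preimage of $x_0$, not merely that preimages are $\epsilon$-dense) and the Markov-graph irreducibility is of independent interest, but for the lemma at hand it is substantially more work.

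The one genuine gap is exactly the step you flag as the "main obstacle": the derivation of strong connectivity from a single dense orbit is not carried out. The dense orbit $y_0$ may land on $\partial\mathcal P$ (or on $\partial I_j$) at some finite time, at which point the symbolic itinerary you read off is no longer a well-defined admissible path, and the claim that it "visits every vertex of $A$ infinitely often" collapses. To repair it one needs either a Baire-category argument producing a dense orbit inside $\bigcap_{n\ge 0} T^{-n}(I\setminus\partial\mathcal P)$ (which requires showing each $T^{-n}(\partial\mathcal P)$ is nowhere dense, not just null), or, more cleanly, for each pair $(a,b)$ apply transitivity in the form $T^{N}(P_a)\cap P_b\neq\emptyset$ and pick a point of the nonempty open set $P_a\cap T^{-N}(P_b)$ avoiding the finite union $\bigcup_{i=0}^{N}T^{-i}(\partial\mathcal P)$ to produce the edge-path from $a$ to $b$. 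Either fix works, but as written the step is incomplete; the paper's argument sidesteps the issue entirely by never passing through the symbolic itinerary of a dense orbit.
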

\begin{proof}
Fix $\epsilon >0$ and $x \in I$. Now take $n$ sufficiently large so that $x_0 \in I_{j_1,\ldots, j_n}, x \in I_{k_1,\ldots,k_n}$
and $|I_{k_1,\ldots,k_n}|, |I_{j_1,\ldots,j_n}| \le \epsilon/2$. Then by the  transitivity assumption from Assumption \ref{assmp:exp}, there exists $N \ge 1$ such that $\textnormal{int}(I_{k_1,\ldots,k_n}) \cap T^N\textnormal{int}(I_{j_1,\ldots,j_n}) \neq \emptyset$ and so there exist $h \in \mathcal{H}^N, y \in I$ so that $y \in \textnormal{int}( I_{j_1,\ldots,j_n}) \cap h(\textnormal{int}(I_{k_1,\ldots,k_n}))$. Hence 
\[
|h(x) - x_0| \le |h(x) - y| + |y - x_0| \le \epsilon
\]
and we are done.
\end{proof}

Our first result on the positive real transfer operator is then as follows. Let $K$ be a real neighbourhood of 1, or the region $\sigma>1-\epsilon$ for some $\epsilon>0$, where $\sum_{h \in \mathcal{H}} \sup |J_h|^\sigma$ converges.

\begin{proposition} \label{thm:PF}
For each $\sigma \in K$ the operator $\widetilde{\mathcal{L}}_{\sigma,0}$ has finite spectral radius $R_\sigma > 0$. Furthermore, $\widetilde{\mathcal{L}}_{\sigma,0}$ has a simple, maximal eigenvalue $\widetilde{\lambda}_{\sigma}$ with a corresponding strictly positive eigenfunction $h_\sigma$. All eigenvalues of modulus $R_{\sigma}$ are simple and all other eigenvalues lie in a disk $\{z \in \mathbb{C} : |z| < \theta_{\sigma} R_{\sigma}\}$ for some $0 < \theta_{\sigma} < 1$. 

In particular, $\widetilde{\mathcal{L}}_{1,0}$  has a strictly positive eigenfunction $h \in B$ associated to the eigenvalue~$1$.
\end{proposition}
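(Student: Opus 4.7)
The overall strategy is the classical Ruelle–Perron–Frobenius scheme: first establish quasi-compactness of $\widetilde{\mathcal L}_{\sigma,0}$ on $B$ via a Lasota–Yorke inequality, and then extract spectral information using positivity together with topological transitivity.

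First I would verify that $\widetilde{\mathcal L}_{\sigma,0}$ acts boundedly on $B$. The Markov condition in Assumption \ref{assmp:exp}(3) is essential here: each inverse branch $h_j$ maps any cell $P_a$ either into a single $P_b \cap I_j$ or entirely out of $I$, so the image $(\widetilde{\mathcal L}_{\sigma,0} f)|_{P_b}$ inherits $C^{1+\mathrm{Lip}}$ regularity from the restrictions $f|_{P_a}$ without producing spurious discontinuities inside any $P_b$. Exploiting the uniform contraction $|Dh|\le\rho^n$ from Assumption \ref{assmp:exp}(1) and the bounded distortion of Jacobians from Assumption \ref{assmp:exp}(2), a standard computation summing over $h \in \mathcal{H}^n$ then yields a Lasota–Yorke inequality of the form
\[
\| \widetilde{\mathcal L}_{\sigma,0}^{n_0} f \| \le r\,\|f\| + C\,\|f\|_\infty,
\]
for some integer $n_0 \ge 1$ and $r < R_\sigma^{n_0}$. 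Combined with the compact embedding $B \hookrightarrow (C(I), \|\cdot\|_\infty)$ furnished by the Arzelà–Ascoli theorem, this gives quasi-compactness of $\widetilde{\mathcal L}_{\sigma,0}$ through the Ionescu-Tulcea-Marinescu (Hennion) theorem, and in particular the bound $r_{\mathrm{ess}}(\widetilde{\mathcal L}_{\sigma,0}) < R_\sigma$ on the essential spectral radius.

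Next I would invoke positivity. Since $\widetilde{\mathcal L}_{\sigma,0}$ preserves the cone of non-negative functions in $B$, a Krein–Rutman / Perron–Frobenius argument applied to the finite-dimensional peripheral spectral projection produces a non-negative eigenfunction $h_\sigma \in B$ associated to an eigenvalue $\widetilde\lambda_\sigma > 0$ with $|\widetilde\lambda_\sigma|=R_\sigma$. Strict positivity of $h_\sigma$ follows from Lemma \ref{lem:density}: if $h_\sigma(y_0)=0$ for some $y_0\in I$, the eigenvalue equation
\[
\widetilde\lambda_\sigma h_\sigma(y_0) = \sum_{h \in \mathcal{H}}e^{\sigma w(h(y_0))}\,h_\sigma(h(y_0))
\]
together with strict positivity of every weight forces $h_\sigma$ to vanish on the entire backward orbit of $y_0$, which is dense by Lemma \ref{lem:density}; continuity then yields $h_\sigma \equiv 0$, a contradiction. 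An analogous backward-orbit argument applied to any other element of the $\widetilde\lambda_\sigma$-eigenspace implies simplicity of this eigenvalue.

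For the peripheral spectrum and the spectral gap, the classical Schaefer–Jentzsch–Wielandt theory of positive quasi-compact operators shows that every eigenvalue of modulus $R_\sigma$ has the form $\widetilde\lambda_\sigma\omega$ with $\omega$ a root of unity and is simple; the set of such $\omega$ forms a finite cyclic group encoding a cyclic decomposition of $I$ whose finiteness is guaranteed by topological transitivity. Hence the rest of the spectrum lies in $\{|z|<\theta_\sigma R_\sigma\}$ for some $\theta_\sigma\in(0,1)$. At $\sigma=1$ the operator $\widetilde{\mathcal L}_{1,0}$ is the Perron–Frobenius transfer operator of $T$, and the absolutely continuous invariant probability measure whose existence under Assumption \ref{assmp:exp} is recorded in \cite{mayer} provides a strictly positive element of $B$ fixed by $\widetilde{\mathcal L}_{1,0}$; by simplicity this is $h_1$ and $\widetilde\lambda_1=1$. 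The main obstacle in this plan is the Lasota–Yorke inequality itself: although each ingredient is classical, the combined use of the partition $\{P_a\}_{a\in A}$, the infinite collection of branches (whose tails must be controlled via the convergence of $\sum_{h\in\mathcal H}\sup|J_h|^\sigma$ implicit in the definition of $K$), and the passage to an iterate $n_0$ large enough to beat the sup-norm term requires careful bookkeeping that uses all three parts of Assumption \ref{assmp:exp} simultaneously.
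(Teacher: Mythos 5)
Your proposal is correct and follows essentially the same route as the paper: a Lasota--Yorke inequality (using Assumptions \ref{assmp:exp}(1)--(3)) to obtain quasi-compactness via Hennion's theorem, followed by the backward-orbit-density argument of Lemma \ref{lem:density} to establish strict positivity of the eigenfunction and simplicity of the peripheral eigenvalues. The only material differences are cosmetic: you invoke Krein--Rutman and Schaefer--Jentzsch--Wielandt theory where the paper simply follows the proof of Baladi's Theorem 1.5 directly and derives the peripheral simplicity by showing two eigenfunctions for an eigenvalue of modulus $R_\sigma$ are $\R$-linearly dependent; and for $\sigma=1$ you appeal to the a.c.i.m.\ from \cite{mayer}, whereas the paper notes that the change of variables built into $\widetilde{\mathcal{L}}_{1,0}$ immediately exhibits $1$ as the dominant eigenvalue with positive eigenfunction. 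One small imprecision worth flagging: the compact embedding is $B \hookrightarrow L^\infty(I)$ (equivalently $\oplus_{a} C(\overline{P_a})$), not $C(I)$, since elements of $B$ need not be continuous across partition boundaries; this does not affect the argument, since the density of the backward orbit of $x_0$ in each $\overline{P_a}$ still forces an eigenfunction vanishing at one point to vanish identically.
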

\begin{proof}
The proof follows the proof \cite[Theorem 1.5]{Baladi}. We explain how the assumptions on our dynamical systems are used in the proof.

For the existence of $\widetilde{\lambda}_{\sigma}$, we show the operator satisfies the Lasota--Yorke inequality. For all $n \geq 1$, $a \in A$ and $x,y \in P_a$, one has 
\begin{align*}
|\widetilde{\mathcal{L}}_{\sigma, 0}^n f(x) - \widetilde{\mathcal{L}}_{\sigma,0}^n f(y)| & \leq \sum_{h \in \mathcal{H}^n \atop x,y \in \mathrm{Dom}(h)} (|J_h(x)-J_h(y)|^\sigma | f \circ h(x)| + |J_h(y)|^\sigma |f \circ J_h(x)-f \circ J_h(y)|) \\
& \leq \sum_{h \in \mathcal{H}^n \atop x,y \in \mathrm{Dom}(h)} |J_h(y)|^\sigma \left| \frac{J_h(x)}{J_h(y)}-1\right| |f \circ h(x)| + \sum_{h \in \mathcal{H}^n \atop x,y \in \mathrm{Dom}(h)} |J_h(y)|^\sigma \|f \| |h(x)-h(y)| \\
&\ll_K  M \|f \|_\infty + \rho^n  \| f \| 
\end{align*}
by Assumption \ref{assmp:exp}, where the implicit constants depend only on $(I,T)$. 

By Hennion's criterion \cite[Theorem XIV.3]{hennion}, the operator $\widetilde{\mathcal{L}}_{\sigma, 0}$ is then quasi-compact with the maximal eigenvalue $\widetilde{\lambda}_{\sigma}>0$. All other eigenvalues lie in a disk $\{z \in \mathbb{C} : |z| < \theta_{\sigma} R_{\sigma}\}$ for some $0 < \theta_{\sigma} < 1$. We also have $R_\sigma \leq \widetilde{\lambda}_\sigma=\lim_{n \rightarrow \infty} \| \widetilde{\mathcal{L}}_{\sigma, 0}^n 1 \|_\infty^{1/n}$ and conversely
\[ R_\sigma \geq \lim_{n \rightarrow \infty} \| \widetilde{\mathcal{L}}_{\sigma, 0}^n 1 \|^{1/n} \geq \lim_{n \rightarrow \infty} \| \widetilde{\mathcal{L}}_{\sigma, 0}^n 1 \|_\infty^{1/n}=\widetilde{\lambda}_\sigma>0. \]

Suppose now that $h_\sigma(x)=0$ for some $x \in X$. Then for any $n \geq 1$
\[ 0=\widetilde{\mathcal L}_{\sigma,0} ^n h_\sigma(x)= \sum_{y=h(x) \atop h \in \mathcal{H}^n, x \in \mathrm{Dom}(h)} e^{-\sigma w(y)} h_\sigma(y) . \]
Since $e^{-\sigma w(y)}>0$ for all $y$ and $h_\sigma$ is continuous, the eigenfunction $h_\sigma$ should be identically zero, which contradicts Lemma \ref{lem:density}. Similarly the eigenvalues of modulus $R_{\sigma}$ are simple again due to Lemma \ref{lem:density}, as one can deduce that if $h_{\sigma,1}$ and $h_{\sigma,2}$ are two eigenfunctions for $R_\sigma$ then they are $\R$-linearly dependent.

For the last item, we observe that $\widetilde{\mathcal L}_{1,0}$ is simply weighted by $1/|J_h|$ hence the change of variable directly shows that $1$ is the dominant eigenvalue with a strictly positive eigenfunction $h \in B$.
\end{proof}

\textbf{Normalisation assumption:} Moving forward we will assume, without loss of generality, that  $R_1 = 1$.

\begin{lemma}[Spectral properties]\label{lem.spectrum}
The operator $\widetilde{\mathcal{L}}_{s,t}$ satisfies the following: 
\begin{enumerate}[(i)]
\item
For $(s,t)$ in a neighbourhood of $(1,0)$, the operators $\widetilde{\mathcal{L}}_{s,t}$ have a simple maximal eigenvalue $\widetilde{\lambda}(s,t)$
 that varies analytically and the rest of the spectrum is contained in a disk of strictly smaller radius that $|\widetilde{\lambda}(s,t)|$ uniformly in $s,t$.
\item
For any fixed $s \neq 1 $ with $\mathfrak{R}(s) \ge 1$, there exists $\epsilon(s)>0$ such that if $|t|< \epsilon(s)$, then $\widetilde{\mathcal{L}}_{s,t}$ has spectral radius at most $1$ and does not have $1$ as an eigenvalue. 
\end{enumerate}
\end{lemma}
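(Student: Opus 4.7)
The proof splits into two parts handled by distinct techniques. Part (i) follows from standard analytic perturbation theory, while part (ii) combines a domination argument at $t = 0$ with a cocycle/non-arithmeticity step, and then propagates the conclusion to small $t$ by resolvent continuity.

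For part (i), the family $(s,t) \mapsto \widetilde{\mathcal{L}}_{s,t}$ is analytic as a map into the bounded operators on $B$ near $(1,0)$, since the weights $e^{sw(y) + \langle t, \overline{N}(y)\rangle}$ are analytic in $(s,t)$ and the operator norm is uniformly controlled by the Lasota--Yorke-type estimate from the proof of Proposition \ref{thm:PF}. Given the simple isolated eigenvalue $1$ from Proposition \ref{thm:PF}, I would pick a small circle $\gamma$ around $1$ lying inside the spectral gap, and use the Cauchy spectral projection
\[
P_{s, t} = \frac{1}{2\pi i}\oint_{\gamma} (z - \widetilde{\mathcal{L}}_{s, t})^{-1}\, dz,
\]
well-defined on a neighbourhood of $(1, 0)$ where $\gamma$ stays in the resolvent set by continuity. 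This yields an analytic family of projections of constant rank one (ranks are locally constant for analytic projection families), and the dominant eigenvalue $\widetilde{\lambda}(s, t)$ is extracted analytically. A second contour enclosing the rest of $\sigma(\widetilde{\mathcal{L}}_{1, 0})$ gives the uniform spectral gap.

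For part (ii) at $t = 0$, the starting point is the pointwise domination $|\widetilde{\mathcal{L}}_{s, 0} f(x)| \leq \widetilde{\mathcal{L}}_{\mathfrak{R}(s), 0} |f|(x)$, which bounds the spectral radius of $\widetilde{\mathcal{L}}_{s, 0}$ above by that of $\widetilde{\mathcal{L}}_{\mathfrak{R}(s), 0}$. For $\mathfrak{R}(s) > 1$, strict monotonicity of $\sigma \mapsto \log \widetilde{\lambda}_\sigma$ (standard thermodynamic formalism, using the normalisation $\widetilde{\lambda}_1 = 1$) gives $\widetilde{\lambda}_{\mathfrak{R}(s)} < 1$, so the spectral radius is strictly less than $1$ and $1 \notin \sigma(\widetilde{\mathcal{L}}_{s, 0})$. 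For $\mathfrak{R}(s) = 1$ with $s = 1 + i\tau$, $\tau \neq 0$, I would argue by contradiction: suppose $\widetilde{\mathcal{L}}_{s, 0} f = f$ for a nonzero $f \in B$. Then $|f| \leq \widetilde{\mathcal{L}}_{1, 0} |f|$; integrating against the left (invariant-measure) eigenfunctional of $\widetilde{\mathcal{L}}_{1, 0}$ at eigenvalue $1$ upgrades this to the equality $\widetilde{\mathcal{L}}_{1, 0}|f| = |f|$. Simplicity of $1$ for $\widetilde{\mathcal{L}}_{1, 0}$ forces $|f| = c h$ for the positive eigenfunction $h$; writing $f = c h e^{i \psi}$ and substituting, the usual argument---a weighted convex combination of unit-modulus complex numbers equals a unit-modulus number---forces the phases to align, yielding
\[
\tau w(h_j(x)) + \psi(h_j(x)) - \psi(x) \in 2 \pi \mathbb{Z}
\]
for every $h_j \in \mathcal{H}$ and $x$ in its domain.

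Evaluating this cocycle identity at a periodic point $x^*$ with $h(x^*) = x^*$ for some $h \in \mathcal{H}^n$, the $\psi$ terms telescope around the orbit and yield $\tau w(x^*) \in 2\pi\mathbb{Z}$; as $x^*$ ranges over all such periodic points, the set $\{w(x^*) : h \in \mathcal{H}^\ast,\ h(x^*) = x^*\}$ lies in $(2\pi/\tau)\mathbb{Z}$, contradicting Assumption \ref{assump:nonarith}. Hence $1 \notin \sigma(\widetilde{\mathcal{L}}_{s, 0})$ in either subcase. To extend to small $t$, since $1$ lies in the resolvent set of $\widetilde{\mathcal{L}}_{s, 0}$, upper semi-continuity of the spectrum (equivalently, continuity of the resolvent in operator norm) gives $1 \notin \sigma(\widetilde{\mathcal{L}}_{s, t})$ for $|t| < \epsilon(s)$, while the spectral radius bound follows from the domination inequality together with continuity of $(\sigma, \rho) \mapsto \widetilde{\lambda}(\sigma, \rho)$ from part (i), after possibly shrinking $\epsilon(s)$. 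The main obstacle in this plan is the Perron--Frobenius-type step: upgrading $|f| \leq \widetilde{\mathcal{L}}_{1, 0}|f|$ to equality, deducing $|f| = ch$ from simplicity, and then invoking Lemma \ref{lem:density} to ensure the resulting phase identity applies at a rich enough set of periodic points to engage Assumption \ref{assump:nonarith}.
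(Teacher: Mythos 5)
Your proof is correct and follows essentially the same route as the paper: part (i) via analytic perturbation theory applied to the simple isolated eigenvalue from Proposition \ref{thm:PF}, and part (ii) via the modulus/convexity argument reducing to a cocycle identity that is then tested on periodic points against Assumption \ref{assump:nonarith}. You are somewhat more explicit than the paper, spelling out the $\mathfrak{R}(s)>1$ case (domination plus strict monotonicity of $\sigma\mapsto\widetilde\lambda_\sigma$) and the propagation from $t=0$ to small $t$ via upper semi-continuity of the spectrum, both of which the paper's proof elides.
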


\begin{proof}
Part (i)  is now a consequence of Proposition \ref{thm:PF} and analytic perturbation theory (See e.g., \cite{kato}).

For part (ii), we claim that for $\xi \neq 0$ we have that $1$ is not in the spectrum of $\widetilde{\mathcal L}_{1+i\xi,0}$.  Assume for a contradiction that $1$ is in the spectrum.  Then by the quasi-compactness from (i), it would correspond to an eigenvalue $\widetilde{\mathcal L}_{1+i\xi,0} g = g$.  Since $\widetilde{\mathcal L}_{1,0} 1 = 1$ (this is by the Normalisation assumption and by replacing $g$ by $w+\log g \circ T -\log g$) and $|\widetilde{\mathcal L}_{1+i\xi,0} g(x)| = |g(x)|$, a simple convexity argument  (See e.g., \cite[Chapter 4]{ParryPollicott}) shows that $|g|=1$ and whence $w$ is cohomologous to a function taking values in $c\Z$ for some $c \in \R$. However this gives a contradiction by evaluating on periodic points in view of the non-arithmeticity  Assumption  \ref{assump:nonarith}. 
\end{proof}

\begin{remark} \label{rem:acim}
Theorem \ref{lem.spectrum}.(iii)  yields the existence of an  ergodic absolutely continuous  invariant  measure for the  map $T$.
 It thus makes sense to compare  the behaviour of  finite orbits to the behaviour of generic  truncated orbits.
\end{remark}




\begin{remark}
It is clear that the normalised transfer operator $\mathcal{L}_{s,t}$ admits the same spectral properties as $\widetilde{\mathcal{L}}_{s,t}$ as described in Lemma \ref{lem.spectrum}. We further have 
 ${\lambda}(s,t)= \widetilde{\lambda}(s- \langle  \overline{\Lambda}, t \rangle,t)$ for $(s,t)$ in a neighbourhood of $(1,0)$.
\end{remark}
 It is then easy to show the following. 
\begin{lemma}\label{lem.deriv}
We have that $\lambda_s(1,0) < 0$, and writing $t = (t_1,\ldots, t_d)$, we have that
$\lambda_{t_i}(1,0) = 0$ for each $i=1,\ldots, d$.
Furthermore the matrix $(\sigma_{i,k})$ where
\[
\sigma_{i,k} = -\frac{\lambda_{t_i,t_k}(1,0)}{\lambda_s(1,0)} 
\]
 for $i,k \in \{1,\ldots,d\}$,
is positive definite.
\end{lemma}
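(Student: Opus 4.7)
The plan is to compute the first and second partial derivatives of $\lambda$ at $(1,0)$ via standard perturbation theory, then to deduce the sign and non-degeneracy assertions from the dynamics and the non-arithmeticity hypothesis.

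First, I would fix analytic families of right and left eigendata $h_{s,t}, \nu_{s,t}$ supplied by Lemma~\ref{lem.spectrum}(i), normalised so that $\nu_{1,0}$ is a probability measure and $\int h_{s,t}\,d\nu_{s,t}=1$; the measure $\mu = h_{1,0}\,d\nu_{1,0}$ is then the $T$-invariant ACIM from Remark~\ref{rem:acim}. Differentiating $\mathcal{L}_{s,t}h_{s,t}=\lambda(s,t)h_{s,t}$ in $s$ (or $t_i$), pairing with $\nu_{s,t}$, and using $\mathcal{L}_{s,t}^{*}\nu_{s,t}=\lambda(s,t)\nu_{s,t}$ to cancel the derivative of $h_{s,t}$, yields the routine identities
\[
\lambda_s(1,0)=\int w\,d\mu, \qquad \lambda_{t_i}(1,0)=\int(c_{j_i}-\Lambda_{j_i}w)\,d\mu.
\]
The first integral equals $-\chi_\mu$, the negative of the Lyapunov exponent of $(T,\mu)$, which is strictly positive by the uniform contraction of inverse branches (Assumption~\ref{assmp:exp}(1)); this yields (1). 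For the vanishing in (2), I would use the identity $\lambda(s,t)=\widetilde\lambda(s-\langle\overline\Lambda,t\rangle,t)$ noted just before the lemma, which rewrites $\lambda_{t_i}(1,0)=-\Lambda_{j_i}\widetilde\lambda_s(1,0)+\widetilde\lambda_{t_i}(1,0)$; the constant $\Lambda_{j_i}$ from Theorem~\ref{thm.firstmoment} is precisely the ratio $\widetilde\lambda_{t_i}(1,0)/\widetilde\lambda_s(1,0)=\mu(I_{j_i})/\chi_\mu$ that makes this combination vanish.

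For (3) the plan is to recognise $\lambda_{t_it_k}(1,0)$ as a pressure Hessian, hence as an asymptotic covariance. Since $\lambda_{t_i}(1,0)=0$, we have $\lambda_{t_it_k}(1,0)=(\log\lambda)_{t_it_k}(1,0)$, and the standard second-order formula from thermodynamic formalism reads
\[
(\log\lambda)_{t_it_k}(1,0)=\lim_{n\to\infty}\frac{1}{n}\int (S_n\varphi_i)(S_n\varphi_k)\,d\mu,
\]
with $S_n\varphi=\sum_{r=0}^{n-1}\varphi\circ T^r$. This quadratic form is positive semi-definite, and dividing by $-\lambda_s(1,0)=\chi_\mu>0$ produces a positive semi-definite matrix $\Sigma$. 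Strict positive definiteness reduces to showing that no nontrivial combination $\sum_i a_i\varphi_i$ is an $L^2(\mu)$-coboundary.

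The main obstacle is this non-degeneracy. I would argue by contradiction: if $\sum_i a_i\varphi_i=u\circ T-u$ with $a\ne 0$, then by Liv\v{s}ic-type rigidity evaluated on every periodic orbit $p$ of period $n$,
\[
\sum_i a_i N_{j_i}(p)=\Bigl(\sum_i a_i\Lambda_{j_i}\Bigr) w_n(p),
\]
where $w_n(p)$ is the weight of the orbit. Using the Markov structure of Assumption~\ref{assmp:exp}(3) together with transitivity, I would build two families of periodic orbits: first, orbits sharing the same combinatorial pattern of $j_i$-visits but having distinct weights, which forces $\sum_i a_i\Lambda_{j_i}=0$ and hence $\sum_i a_i N_{j_i}(p)=0$ on every $p$; second, orbits dominated by a single digit $j_i$, from which $a_i=0$ is extracted. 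That the weights along distinct periodic orbits can be made rationally independent in this sense is exactly the content of the non-arithmeticity Assumption~\ref{assump:nonarith}, completing the contradiction.
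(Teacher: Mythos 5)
Your computation of $\lambda_s(1,0)$ and $\lambda_{t_i}(1,0)$ by differentiating the eigenvalue equation and pairing with the dual eigendata is essentially what the paper does; the variation where you read off $\lambda_{t_i}(1,0)=0$ from the relation $\lambda(s,t)=\widetilde\lambda(s-\langle\overline\Lambda,t\rangle,t)$ and the definition of $\Lambda_{j_i}$ is a clean alternative to the paper's ``similar argument differentiating in $t_i$,'' and your identification of the Hessian with the asymptotic covariance $\lim_n \frac1n\mu((S_n\varphi_i)(S_n\varphi_k))$ matches the paper's argument (the paper differentiates $\mathcal L_{1,t}^n g_{1,t}=\lambda_{1,t}^n g_{1,t}$ twice and takes $n\to\infty$). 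One small glitch: your line $\lambda_s(1,0)=\int w\,d\mu$ has the sign flipped (since $w=-\log|J_h|>0$, the correct relation is $\lambda_s(1,0)=-\int w\,d\mu=-\chi_\mu<0$), but this does not affect the conclusion.

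The genuine divergence is on positive definiteness. Reading the paper's proof carefully, it establishes only $v^T A v = \lim_n \frac1n\mu\bigl(\sum_i v_i S_n\varphi_i\bigr)^2 \ge 0$, i.e.\ positive \emph{semi}-definiteness; the strict non-degeneracy is deliberately deferred to Propositions~\ref{prop.var} (the $d=1$ case, where non-arithmeticity gives the contradiction) and \ref{prop.multivar} (which merely \emph{characterises} strict positive definiteness in terms of a coboundary condition). You instead try to close the gap inside the lemma via a Liv\v{s}ic-type rigidity argument on periodic orbits. The reduction to ``no nontrivial $\sum_i a_i\varphi_i$ is a coboundary'' is exactly Proposition~\ref{prop.multivar}, and that part is fine. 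But the final step is a gap: the claim that ``the weights along distinct periodic orbits can be made rationally independent \ldots is exactly the content of the non-arithmeticity Assumption'' overstates what Assumption~\ref{assump:nonarith} provides --- non-arithmeticity only says the set of weights over all closed orbits is not contained in a single lattice $c\Z$, and does not furnish the two engineered families of periodic orbits (same combinatorial $j_i$-pattern with distinct weights, or orbits ``dominated by a single digit'') that your contradiction needs. Constructing such orbits requires a separate argument using the Markov structure, and indeed this is why the paper phrases Proposition~\ref{prop.multivar} as a characterisation rather than an unconditional statement. If you wish to keep the lemma as stated with ``positive definite,'' you should either prove it only in the $d=1$ case from non-arithmeticity (as Proposition~\ref{prop.var} does) or downgrade the lemma's conclusion to semi-definiteness and invoke the later propositions, which is what the paper implicitly does.
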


\begin{proof}
Since the operator
$\mathcal L_{1,0} : B \to B$ has isolated  simple maximal eigenvalue $1$ the associated eigenvalue $\lambda_{s,t}$
and associated eigenfunction $h_{s,t}$
 for   $\mathcal L_{s,t} : B \to B$ have an analytic dependence $(s,t)$ for $|s-1|, \|t\|$ sufficiently small.
 
 We can differentiate the eigenfunction $\mathcal L_{s,0} g_{s,0} = \lambda_{s,0} g_{s,0}$  in $s$ to get
$$\mathcal L_{s,0}( -w g_{s,0}) + \mathcal L_{s,0}( \partial_sg_{s,0}) =  \partial_s \lambda_{s,0} g_{s,0}
+ \lambda_{s,0}   \partial_s g_{s,0}.
$$
We can set $s=0$ and apply the dual eigenvalue equation $\mathcal L_{1,0}^* \mu = \lambda_{1,0} \mu$
and cancelling terms
to deduce that 
$$
\lambda_s(1,0):= \partial_{s=1} \lambda_{s,0} =  \mu \left( \sum_{j \in J} \int_{I_j} -\log|J_{h_j}(x)| g_{1,0}(x) \ dx \right) < 0.
$$
A similar argument differentiating in the variables $t_i$ shows that $\lambda_{t_i}(1,0):=\partial_{t_i=0} \lambda_{1,t} = \mu (\varphi_i) = 0$ for each $i$.

The expressions for the second derivatives start from the eigenvalue identities $\mathcal L_{1,t}^n g_{1,t} = \lambda_{1,t}^n g_{1,t}$
(for $n \geq 1$).  Differentiating once in $t_i$ (at $s=0$) gives 
$$
\mathcal L_{1,t}^n( (S_n\varphi_i) h_{1,t}) + \mathcal L_{1,t}^n( \partial_th_{1,t}) =  
n  \lambda_{1,t,0}^{n-1} \partial_t \lambda_{1,t,0} h_{1,t,0}
+ \lambda_{1,t,0}^n   \partial_t h_{1,t,0}.
$$
where $S_n\varphi_i = \varphi_i + \varphi_i\circ T + \cdots + \varphi_i\circ T^{n-1}$
and  differentiating again  in $t_i$ gives 
$$
\begin{aligned}
&\mathcal L_{1,t}^n( (S_n\varphi_i)^2 h_{1,t})  
+ 2 \mathcal L_{1,t}^n( (S_n\varphi_i)
\partial_t
g_{1,t}) + \mathcal L_{1,t}^n( \partial_t^2g_{1,t})
\cr
&=  
n(n-1)  \lambda_{1,t}^{n-2} (\partial_t \lambda_{1,t})^2 g_{1,t}
+  n  \lambda_{1,t}^{n-1} \partial_t^2 \lambda_{1,t} g_{1,t}
+
2  n  \lambda_{1,t}^{n-1} \partial_t \lambda_{1,t} \partial_t g_{1,t}
+ \lambda_{1,t}^n   \partial_t^2 g_{1,t}.
\end{aligned}
$$

Setting $t=0$, applying the dual eigenvalue equation  and cancelling terms, dividing by $n$ and letting $n \to \infty$ we have the expression
$$
\lambda_{t_i, t_i}(1,0) = \lim_{n \to \infty} \frac{1}{n}\mu\left( (S_n\varphi_i)^2 \right) 
$$ 
at $t=0$.
A similar argument gives
$$
\lambda_{t_i,t_j}(1,0)  =  \lim_{n \to +\infty} \frac{1}{n}\mu\left((S_n\varphi_i) (S_n\varphi_j) \right).
$$ 
at $t=0$.
To show that the matrix $A = (\lambda_{t_i,t_j}(1,0))_{i,j}$
is positive define we can consider any vector $v =(v_1,\ldots, v_d)  \in \mathbb R^d \backslash \{0\}$ and then we can write 
$$
\begin{aligned}
v A v^T &= 
\lim_{n \to \infty} \frac{1}{n}\mu\left(\sum_{i=1}^d v_i S_n \varphi_i \right)^2 \geq 0.
\end{aligned}
$$
\end{proof}

We would like to know when the matrix $(\sigma_{i,k})$ is strictly positive definite.

\begin{proposition}\label{prop.var}
Suppose that $d=1$, i.e., we are working with a single cost function. Then $\lambda_{t,t}(1,0) > 0$.
\end{proposition}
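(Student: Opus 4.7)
The strategy is a proof by contradiction, combining the variance formula derived in Lemma~\ref{lem.deriv} with the non-arithmeticity condition on $w$ in Assumption~\ref{assump:nonarith}. Set $f := c_{j_1} - \Lambda_{j_1}\log|T'|$, so that $\mu(f) = 0$ by the choice of $\Lambda_{j_1}$ and, for $x \in X_n$, the centred counting function $\varphi(x)$ coincides with the Birkhoff sum $S_n f(x) := \sum_{k=0}^{n-1} f(T^k x)$. By Lemma~\ref{lem.deriv},
\[
\lambda_{t,t}(1,0) \;=\; \lim_{n\to\infty}\frac{1}{n}\,\mu\!\bigl((S_n f)^2\bigr) \;\ge\; 0,
\]
i.e.\ $\lambda_{t,t}(1,0)$ is the asymptotic variance of $f$ under $\mu$. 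I aim to show strict positivity.

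Suppose for contradiction that $\lambda_{t,t}(1,0)=0$. Because $\mathcal L_{1,0}$ has a spectral gap on $B$ (Proposition~\ref{thm:PF}), the correlations $\mu(f\cdot f\circ T^k)$ decay exponentially, and the standard Green--Kubo/Gordin argument in this spectral-gap setting upgrades the vanishing of the variance to an actual cohomological identity
\[
f \;=\; u - u\circ T
\]
with $u$ of the same Lipschitz regularity as $f$ on each cell $P_a$. The ingredients are in place: $\log|T'|$ is Lipschitz on every $P_a$ by the bounded distortion Assumption~\ref{assmp:exp}.(2), and $c_{j_1}$ is constant on each $I_j\subset P_a$, so the Livsic-type machinery for piecewise smooth expanding Markov maps applies and produces a piecewise Lipschitz transfer function $u$ that extends continuously to each $\overline{P_a}$.

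Next I evaluate this identity on periodic orbits of $T$. For any $p$ with $T^n p = p$ (such orbits are abundant by transitivity together with the Markov structure), telescoping yields
\[
0 \;=\; S_n f(p) \;=\; S_n c_{j_1}(p) \,-\, \Lambda_{j_1}\, S_n\log|T'|(p) \;=\; N_{j_1}(p) \,-\, \Lambda_{j_1}\,w(p),
\]
where $S_n\log|T'|(p) = \log|(T^n)'(p)| = -\log|J_h(p)| = w(p)$ for the inverse branch $h\in\mathcal H^n$ fixing $p$, matching \eqref{eq:weight}, and $S_n c_{j_1}(p) = N_{j_1}(p)$ counts the visits of the orbit of $p$ to $I_{j_1}$. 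Since $N_{j_1}(p)\in\Z_{\ge 0}$ and $\Lambda_{j_1} > 0$ by Theorem~\ref{thm.firstmoment}, this forces
\[
w(p) \;\in\; \Lambda_{j_1}^{-1}\,\Z \qquad \text{for every periodic } p \in I,
\]
which is exactly the lattice condition excluded by Assumption~\ref{assump:nonarith} (with $c=\Lambda_{j_1}^{-1}$). The contradiction forces $\lambda_{t,t}(1,0) > 0$.

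The main obstacle is the coboundary step: promoting the analytic statement $\lim_n n^{-1}\mu((S_n f)^2)=0$ to the pointwise identity $f=u-u\circ T$ with $u$ regular enough for evaluation at periodic points. Given the spectral gap on $B$, this is a routine but not entirely trivial adaptation of the classical Livsic/Parry--Pollicott framework to our piecewise Lipschitz setting; once it is in place, the reduction to non-arithmeticity of $w$ on periodic orbits is an immediate unpacking of definitions.
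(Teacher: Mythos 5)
Your argument is correct and follows essentially the same route as the paper's: the paper invokes the well-known fact that $\lambda_{t,t}(1,0)=0$ forces $N_j - \Lambda_j w$ to be cohomologous to a constant (with $C=0$ by $\mu(\varphi_j)=0$), evaluates on periodic orbits, and contradicts Assumption~\ref{assump:nonarith}; you spell out the coboundary step via a Livsic-type argument and reach the same contradiction. The only cosmetic quibble is that for $m>1$ one should write $\log|\det DT|$ rather than $\log|T'|$, but the telescoping to $w(p)=-\log|J_h(p)|$ is as you say.
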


\begin{proof}
Following a well-known argument for transfer operators, we see that $\lambda_{t,t}(1,0) = 0$ if and only if when $x \in I$ satisfies that $h(x) =x$ for $h = h_1 \circ \cdots \circ h_n \in \mathcal{H}^n$ then
\[
N_j(x) - \Lambda_j w(x) = \sum_{0 \le i \le n-1} c_j(T^i(x)) + \Lambda_j \log|J_h(x)|
\]
is equal to $C n$. However, it is easy to see that we must have $C =0$ by our choice of $\Lambda_j$. This would contradict Assumption \ref{assump:nonarith} as $\sum_{0 \le i \le n-1} c_j(T^i(x)) $ takes values in $\Z$.
\end{proof}

It is easy to generalise this to the following multidimensional version.

\begin{proposition}\label{prop.multivar}
The matrix $(\sigma_{i,k})$ from Lemma \ref{lem.deriv} is strictly positive definite if and only if there does not exist $v \in \R^d$ such that 
\[
 \left\langle v , \overline{\varphi}(x) - w(x) \overline{\Lambda}  \right\rangle = 0
 \]
 for all  $x \in I$ such that $h(x) = x$ for some $h \in\mathcal{H}$.

\end{proposition}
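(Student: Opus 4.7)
The plan is to reduce the question to a standard cohomological criterion, the way Proposition \ref{prop.var} does in the $d=1$ case. From the derivation in Lemma \ref{lem.deriv}, for any $v=(v_1,\ldots,v_d)\in\R^d$ one has
\[
v^T (\sigma_{i,k}) v \;=\; -\frac{1}{\lambda_s(1,0)} \lim_{n\to\infty} \frac{1}{n}\,\mu\!\left(\,\sum_{i=1}^d v_i\, S_n\tilde\varphi_i\,\right)^{\!2},
\]
where $\tilde\varphi_i := c_{j_i} - \Lambda_{j_i}\log|T'|$ is a mean-zero Lipschitz function on each $P_a$ whose $n$-th Birkhoff sum evaluated at $x=h(x_0)$ recovers $\varphi_i(x)$. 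Since $\lambda_s(1,0)<0$, the matrix $(\sigma_{i,k})$ fails to be strictly positive definite precisely when there exists $v\neq 0$ for which the asymptotic variance of the Birkhoff sums of $\psi_v := \sum_i v_i\tilde\varphi_i$ under $\mu$ equals $0$.

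Next, I would invoke the standard variance-vanishing criterion for expanding Markov systems (a Livshits/Green--Sinai type argument): the asymptotic variance of $S_n\psi_v$ under $\mu$ is zero if and only if $\psi_v$ is a measurable coboundary, i.e.\ $\psi_v = u - u\circ T$ for some bounded $u$. Because $\int \psi_v\,d\mu = 0$ by our choice of $\Lambda_{j_i}$ and the Rokhlin formula $\int\log|T'|\,d\mu = h(T)$, no additive constant is needed. The coboundary equation is detected exactly on periodic orbits: if $x\in I$ satisfies $h(x)=x$ for some $h=h_{k_1}\circ\cdots\circ h_{k_n}\in\mathcal H^n$, then $T^n x = x$ and
\[
S_n\psi_v(x) \;=\; u(x)-u(T^n x) \;=\; 0.
\]
Conversely, vanishing of $S_n\psi_v$ on all periodic orbits, combined with Lipschitz regularity of $\psi_v$ on each piece $P_a$, suffices to solve the coboundary equation; this is the content of the Livshits-type theorem in the piecewise-expanding, bounded-distortion setting guaranteed by Assumption \ref{assmp:exp}, together with the density of backward orbits given by Lemma \ref{lem:density}.

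Finally, I would translate this into the claimed form. For $x$ periodic with $h(x)=x$, $h\in\mathcal H^n$, one has $S_n\tilde\varphi_i(x)=\varphi_i(x)$ by the additive cocycle identity for $w$ and $N_{j_i}$, hence
\[
S_n\psi_v(x) \;=\; \sum_{i=1}^d v_i\varphi_i(x) \;=\; \langle v,\overline\varphi(x)\rangle.
\]
Therefore $(\sigma_{i,k})$ is not strictly positive definite exactly when there is $v\neq 0$ with $\langle v,\overline\varphi(x)\rangle = 0$ on every periodic orbit, which is the criterion stated in the proposition (up to the natural identification of $\overline\varphi$ with $\overline N - w\overline\Lambda$).

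The main obstacle is the Livshits step, since our setting is not a smooth uniformly expanding map on a manifold but a piecewise $C^{1+\mathrm{Lip}}$ Markov map on a subset of $\R^m$ with countably many branches. I would address it by working on the Banach space $B$ from Definition \ref{def:lip}, using the spectral gap of $\mathcal L_{1,0}$ from Proposition \ref{thm:PF} to invert $I-\mathcal L_{1,0}$ on $\ker\mu$ and thereby exhibit the transfer operator Green solution $u$ of the coboundary equation, after verifying that vanishing on periodic points forces the candidate solution (produced formally as $\sum_{n\ge 0}\mathcal L_{1,0}^n\psi_v$ on the orthogonal complement of the eigendirection) to be genuinely Lipschitz on each $P_a$ rather than merely $L^2$.
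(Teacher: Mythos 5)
Your proposal is correct and follows the same route the paper uses for the $d=1$ case (Proposition \ref{prop.var}) and implicitly intends for the multidimensional version (the paper merely notes it is ``easy to generalise'' and gives no further detail): express $v^T(\sigma_{i,k})v$ as $-\lambda_s(1,0)^{-1}$ times the asymptotic variance of the Birkhoff sums of $\psi_v = \sum_i v_i \tilde\varphi_i$, invoke the standard ``variance zero iff cohomologous to a constant'' criterion, note that the constant must be $0$ since $\mu(\psi_v)=0$ by the choice of $\overline\Lambda$, and detect the coboundary relation on periodic orbits. Your closing worry about the Livshits step — that $\psi_v$ is unbounded and only piecewise regular, so a classical Livshits theorem does not apply directly and one must work through the spectral gap of $\mathcal{L}_{1,0}$ on $B$ — is the correct place to be careful, and is exactly what the paper's phrase ``following a well-known argument for transfer operators'' is alluding to; you are filling in a gap the paper itself leaves implicit rather than diverging from it.
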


\section{Proofs of the main results}

\subsection{Law of large numbers}\label{subsec:LL}


The proof of Theorem \ref{thm.firstmoment}   relies on the  following classical result due to Delange \cite{delange}  which will allow us to convert results on complex functions to asymptotic formulae. 

\begin{proposition}[Delange Tauberian Theorem] \label{prop.tau}
For a monotone increasing function $\phi : \mathbb{R}_{>0} \to \mathbb{R}_{>0}$, we set
\[
f(s) = \int_0^{\infty} e^{-sQ} \ d\phi(Q).
\]
Suppose that there is $\delta > 0$ such that
\begin{enumerate}
\item $f(s)$ is analytic on $\{\mathfrak{Re}(s) \ge \delta\} \backslash \{\delta\}$; and,
\item  there are positive integers $n, k  \ge 1$,  $ \delta$ a real  number, an open neighbourhood $U$ of  $\delta$, non-integer numbers $0 < \mu_1, \ldots, \mu_k < n$ and analytic maps $g,h, l_1, \ldots, l_k : U \to \mathbb{C}$ such that
\[
f(s) = \frac{g(s)}{(s-\delta)^n} + \sum_{j=1}^k \frac{l_j(s)}{(s-\delta)^{\mu_j}}+ h(s) \ \text{ for $s \in U$ and  $g(\delta) > 0$}.
\]
\end{enumerate}
Then
\[
\phi(Q) \sim \frac{g(\delta)}{(n-1)!} Q^{n-1} e^{\delta Q}
\]
as $Q\to\infty$.
\end{proposition}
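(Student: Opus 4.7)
The approach is the classical complex-analytic Tauberian technique in the Wiener--Ikehara / Newman tradition: use the explicit singular expansion of $f$ at $s=\delta$ to extract the leading asymptotics of $\phi$ by contour deformation of the inverse Laplace--Perron integral. A first simplification is to reduce to $\delta=0$ by the substitution $s \mapsto s+\delta$, equivalently replacing $\phi$ by $\tilde\phi(Q) := \int_0^Q e^{-\delta u}\,d\phi(u)$; the target then becomes $\tilde\phi(Q)\sim g(\delta)\,Q^{n-1}/(n-1)!$.

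Since $f$ need not decay on vertical lines, the second step is to smooth $\tilde\phi$ by Ces\`aro means of sufficiently high order $k$: set $\Phi_k(Q) = (k!)^{-1}\int_0^Q(Q-u)^k\,d\tilde\phi(u)$, whose Laplace transform is $\tilde f(s)/s^{k+1}$. The Mellin--Perron inversion
\[
\Phi_k(Q) \;=\; \frac{1}{2\pi i}\int_{c-i\infty}^{c+i\infty}\frac{\tilde f(s)\,e^{sQ}}{s^{k+1}}\,ds \qquad (c>0)
\]
now converges absolutely, and one deforms the contour leftwards using a keyhole around $s=0$ inside the neighbourhood $U$ where analytic continuation is available. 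The order-$n$ pole contributes a residue whose leading term in $Q$ is $g(\delta)\,Q^{n+k}/[(n-1)!\,(n+k)!]$; each branch piece $l_j(s)/s^{\mu_j}$ with $\mu_j<n$ contributes only $O(Q^{\mu_j+k})$, strictly of lower order; and the deformed integral along $\mathfrak{Re}(s)=-\eta$ (inside $U$), together with the vertical tails outside $U$, is $o(Q^{n+k-1})$. The third step is to de-smooth: sandwich $\tilde\phi(Q)$ between forward and backward $h$-differences of $\Phi_k$ using the monotonicity of $\phi$, then let $h\to 0$ after $Q\to\infty$ in the standard Karamata fashion to recover $\tilde\phi(Q)\sim g(\delta)\,Q^{n-1}/(n-1)!$, and finally undo the substitution to obtain $\phi(Q)\sim g(\delta)\,Q^{n-1}e^{\delta Q}/(n-1)!$.

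The main obstacle will be justifying the contour deformation past the critical line $\mathfrak{Re}(s)=\delta$: the hypothesis only provides analyticity on the closed right half-plane $\{\mathfrak{Re}(s)\ge\delta\}\setminus\{\delta\}$ together with continuation inside the small neighbourhood $U$, so a literal shift to $\mathfrak{Re}(s)=\delta-\eta$ globally is unlicensed. The remedy is Newman's contour trick: deform only inside the compact window $U$ where continuation is known and keep the tails on the critical line itself, controlling them via the $s^{-(k+1)}$ decay together with crude polynomial bounds on $\tilde f$ along $\mathfrak{Re}(s)=0$ that follow from positivity of $d\phi$ and a uniform tail estimate for $\phi$. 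A secondary technical point is the bookkeeping for the multiple fractional branch singularities $\mu_j$, which requires threading several keyholes and choosing consistent branches of $s^{-\mu_j}$, but introduces no conceptual difficulty once the principal pole has been handled.
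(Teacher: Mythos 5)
The paper does not prove Proposition \ref{prop.tau}: it is stated and then cited directly to Delange \cite{delange} (and the surrounding arguments are attributed to \cite{CanPol2}). There is therefore no internal proof against which to compare your attempt; you are being asked, in effect, to reconstruct a classical theorem from scratch.

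Your Newman-style contour argument with Ces\`aro smoothing is a legitimate route to such a Tauberian statement, and you correctly identify the central obstruction (analyticity is only guaranteed on the closed half-plane $\mathfrak{Re}(s)\ge\delta$ plus a small neighbourhood $U$ of $\delta$, so one cannot shift the whole contour past the critical line). However, there is a concrete arithmetic error in your first reduction. After the substitution $s\mapsto s+\delta$, the function $\tilde f(s)=f(s+\delta)=\int_0^\infty e^{-sQ}\,d\tilde\phi(Q)$ has a pole of order $n$ at $s=0$, and the correct reduced target is $\tilde\phi(Q)\sim c\,Q^{n}$ (with $c$ proportional to $g(\delta)/n!$), \emph{not} $\tilde\phi(Q)\sim g(\delta)\,Q^{n-1}/(n-1)!$. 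One can see this directly: integrating by parts gives $\tilde\phi(Q)=e^{-\delta Q}\phi(Q)-\phi(0)+\delta\int_0^Q e^{-\delta u}\phi(u)\,du$, and if $\phi(Q)\sim C\,Q^{n-1}e^{\delta Q}$ the second term dominates and produces $\tilde\phi(Q)\sim \delta C\,Q^{n}/n$. The same off-by-one power propagates to your residue computation: the residue of $\tilde f(s)e^{sQ}/s^{k+1}$ at $s=0$ with $\tilde f(s)\approx g(\delta)/s^n$ is $g(\delta)\,Q^{n+k}/(n+k)!$, with no spurious $(n-1)!$ in the denominator. These exponents matter because the whole point of the argument is to separate the order-$n$ pole from the sub-$n$ fractional singularities $\mu_j$, so the bookkeeping needs to be tight; I would also note that a single keyhole at $s=\delta$ with one branch cut suffices (all the branch points coincide), and that the tail estimate on the critical line in Newman's method comes from the explicit Laplace representation on $\mathfrak{Re}(s)>0$ and the choice of kernel, not from a polynomial bound on $\tilde f$ on $\mathfrak{Re}(s)=0$, which you have no hypothesis to justify.
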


We will use this result and follow the main arguments present in \cite{CanPol2}.

\begin{proof}[Proof of Theorem \ref{thm.firstmoment}]
Let $X$  be defined as in (\ref{eq:defX}). Fix $
j \in J$.
We define the two-variable series
\[
\zeta(s,t) = \sum_{x \in X} e^{-sw(x) + t  N_j(x)}
\]
for $s,t\in \C$.
Note that
\[
\zeta(s,t) = \widetilde{\mathcal{L}}_{s,t}^\sharp \circ \sum_{n\ge 0} \widetilde{\mathcal{L}}_{s ,t}^n \1(x_0)
\]
where $ \1$ stands for the  function taking constant value $1$ in $B$.

It follows from Lemma \ref{lem.spectrum} that for any $s_0 \in \C$ with  $\mathfrak{Re}(s_0) \ge 1$,  there exist $\epsilon(s_0), \delta(s_0) > 0$ such that $\zeta(s,t)$ is bi-analytic  for  the set of  $(s,t)$ satisfying  $|s-s_0|\le \delta(s_0)$ and $|t| \le \epsilon(s_0)$.  For $(s,t)$ close to $(1,0)$ we can write 
\[
\widetilde{\mathcal{L}}_{s,t}^n\textbf{1}(x_0) = \widetilde{\lambda}(s,t)^n \widetilde{Q}(s,t)\textbf{1}(x_0) + O(\theta^n)
\]
where $0 < \theta < 1$ is independent of $s,t$.
 It follows that
\begin{equation} \label{eq.double}
\zeta(s,t) = \frac{ \widetilde{\lambda}(s ,t)\widetilde{Q}(s,t)\textbf{1}(x_0)}{1-\widetilde{\lambda}(s,t)} + R(s,t)
\end{equation}
where $R(s,t)$ is bi-analytic in a neighbourhood of $(1,0)$. Set $F(s,t) = \widetilde{\lambda}(s,t)\widetilde{Q}(s,t)\textbf{1}(x_0)$ and note that $F(1,0) > 0$ (as $\widetilde{\mathcal{L}}_{1,0}$ has strictly positive eigenfunction associated to the eigenvalue $1$). 

When $t =0$, we deduce that 
\[
\zeta(s,0) = \frac{  - \widetilde{\lambda}_s(1,0)^{-1} F(1,0)}{s-1} + \tilde{R}(s)
\]
for $\mathfrak{Re}(s) \ge 1$, where $\tilde{R}$ is analytic on this domain. It follows from Proposition \ref{prop.tau} (here $\delta=1$)  together with Lemma \ref{lem.deriv} that
\[
\#\{ x \in X : w(x) < Q\} \sim  -\widetilde{\lambda}_s(1,0)^{-1} F(1,0) \, e^{Q}
\]
as $Q \to \infty$.

We now differentiate $(\ref{eq.double})$ with respect to $t$ at $t=0$. This yields 
\[
\sum_{n \ge 1} N_j(x) e^{-sw(x)} = \zeta_t(s,0) = \frac{ - \widetilde{\lambda}_t(s,0)\widetilde{\lambda}(s,0)\widetilde{Q}(s,0)\1(x_0)}{(1-\lambda(s,0))^2} + L(s)
\]
where $L$ is analytic on $\mathfrak{R}(s)\ge 1$ apart from a possible simple pole at $s=1$. It follows easily that $\zeta_s(s,0)$ has a pole of residue $\widetilde{\lambda}_s(1,0)^{-2} \widetilde{\lambda}_t(1,0) \widetilde{\lambda}(1,0)\widetilde{Q}(1,0)\1(x_0)$ at $s=1$. Applying Proposition \ref{prop.tau} gives that
\[
\sum_{w(x) <Q} N_j(x) \sim \widetilde{\lambda}_s(1,0)^{-2} \widetilde{\lambda}_t(1,0) F(1,0) Q e^Q
\]
as $G\to\infty$. Combining this with our asymptotic above shows that

\[
\lim_{Q\to\infty} \frac{1}{\#\{ x \in X : w(x) < T\}}  \sum_{w(x) < Q} \frac{N_j(x)}{Q} = - \frac{\widetilde{\lambda}_t(1,0)}{\widetilde{\lambda}_s(1,0)} = \Lambda_j 
\]
 as required.
\end{proof}

\subsection{Large deviations}\label{subsec:LD}

We will deduce our large deviation theorem  (Theorem \ref{thm.largedev})   from   the following local G\"artner-Ellis type Theorem.
\begin{lemma}\label{lem.localldp} [Lemma 3.11 in \cite{CanPol2}]
Let $Z_n$ be a sequence of real random variables and $\mu_n$ be a sequence of probability measures on a space $X$.  Suppose that
there exists $\eta>0$ with
\[
\lim_{n\to\infty} \frac{1}{n}\log \mathbb{E}_n(e^{tZ_n}) = c(t)
\]
for all $t \in [-\eta, \eta]$ where $\E_n$ represents the expectation with respect to $\mu_n$.
If $c$ is continuously differentiable and strictly convex on $[-\eta,\eta]$ and $c^\prime(0)=0$, then, for $0<\epsilon< \eta$,
\[
\limsup_{n\to\infty} \frac{1}{n}\log \mu_n(Z_n>n\epsilon) \le -\sup_{0<t<\eta} \{t\epsilon - c(t)\} <0.
\]

\end{lemma}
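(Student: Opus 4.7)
The plan is to follow the standard exponential Chernoff argument followed by optimisation over the parameter $t$, and then to use the regularity and strict convexity of $c$ to confirm strict negativity of the limsup.

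First, for any $t \in (0, \eta)$, Markov's inequality applied to the non-negative random variable $e^{tZ_n}$ yields
\[
\mu_n(Z_n > n\epsilon) \;=\; \mu_n\!\left(e^{tZ_n} > e^{tn\epsilon}\right) \;\le\; e^{-tn\epsilon}\,\mathbb{E}_n(e^{tZ_n}).
\]
Taking logarithms, dividing by $n$, and invoking the hypothesis on the limit of $\tfrac{1}{n}\log\mathbb{E}_n(e^{tZ_n})$ gives
\[
\limsup_{n\to\infty} \frac{1}{n}\log \mu_n(Z_n > n\epsilon) \;\le\; -t\epsilon + c(t) \;=\; -\bigl(t\epsilon - c(t)\bigr).
\]
Since $t \in (0,\eta)$ was arbitrary, I take the infimum of the right-hand side (equivalently, the supremum of $t\epsilon - c(t)$) to obtain the first stated inequality.

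It remains to show that $\sup_{0 < t < \eta}\{t\epsilon - c(t)\} > 0$. Observe that $c(0)=0$, since $\mathbb{E}_n(1) = 1$ for all $n$. Together with the hypothesis $c'(0) = 0$ and the continuous differentiability of $c$, one can choose $t_0 \in (0,\eta)$ small enough that $|c'(s)| < \epsilon/2$ for all $s \in [0, t_0]$. Then by the mean value theorem,
\[
c(t_0) = c(0) + \int_0^{t_0} c'(s)\,ds \;\le\; \tfrac{1}{2}\,t_0\,\epsilon,
\]
so that $t_0\epsilon - c(t_0) \ge \tfrac{1}{2} t_0 \epsilon > 0$, and the supremum is strictly positive. (Strict convexity of $c$ is in fact not needed for the strict negativity; it is used elsewhere to guarantee, via a Legendre-transform picture, that the rate function is well defined and positive for $\epsilon \neq 0$.)

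The argument is essentially routine; the only mild subtlety is combining $c(0) = 0$, $c'(0) = 0$, and continuous differentiability to rule out the degenerate case where the supremum vanishes, but this is handled transparently by the integral estimate above. There is no serious obstacle to this proof, and no further spectral or dynamical input is needed beyond the hypotheses already stated.
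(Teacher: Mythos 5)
Your proof is correct and is the standard Chernoff bound argument (exponential Markov inequality, optimisation over $t$, then strict positivity of the rate from $c(0)=c'(0)=0$ and continuity of $c'$), which is exactly the approach the paper defers to by citing Lemma 3.11 of \cite{CanPol2} without reproducing its proof. One minor mislabel: the identity $c(t_0)=c(0)+\int_0^{t_0}c'(s)\,ds$ is the fundamental theorem of calculus rather than the mean value theorem, but the estimate and conclusion are unaffected; your observation that strict convexity is not actually needed for strict negativity of the limsup is also accurate.
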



We also need the following asymptotic result. This can be seen as a weak version of  the Hwang Quasi-power Theorem mentioned in the introduction.

\begin{proposition} Let $j \in J$.
There exists an  open real neighbourhood $U$ with $0 \in U$, a real analytic function $\sigma: U \to \R_{> 0}$ such that the following holds. For any $t_0 \in U$, there is a positive constant $C_{t_0}$  such that
\[
\sum_{w(x) < Q} e^{t_0 \varphi_j (x)}  \sim C_{t_0} e^{\sigma(t_0) Q}
\]
as $Q\to\infty$. Furthermore $\sigma$ is strictly convex on $U$.
\end{proposition}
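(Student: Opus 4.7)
The plan is to adapt the approach used in the proof of Theorem~\ref{thm.firstmoment}, treating $t_0$ as a parameter and tracking how the dominant pole of the associated Dirichlet series moves as $t_0$ varies. For $t_0$ in a small real neighbourhood $U$ of $0$, consider the twisted Dirichlet-type series
\[
\xi_{t_0}(s) \;=\; \sum_{x \in X} e^{-s w(x) + t_0 \varphi_j(x)}.
\]
Since $\varphi_j = N_j - w\Lambda_j$, one sees that $\xi_{t_0}(s) = \zeta(s + t_0\Lambda_j,\, t_0)$ with $\zeta$ as defined in the proof of Theorem~\ref{thm.firstmoment}; hence $\xi_{t_0}$ inherits the transfer-operator representation coming from $\widetilde{\mathcal L}_{s + t_0\Lambda_j,\, t_0}$.

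By Lemma~\ref{lem.spectrum}(i), the dominant eigenvalue $\widetilde\lambda(s, t_0)$ of $\widetilde{\mathcal L}_{s, t_0}$ varies analytically in a neighbourhood of $(1, 0)$ and is strictly separated from the rest of the spectrum. Since $\widetilde\lambda_s(1, 0) \neq 0$ as a consequence of Lemma~\ref{lem.deriv}, the implicit function theorem supplies a real analytic function $\alpha: U \to \mathbb R$ with $\alpha(0) = 1$ and $\widetilde\lambda(\alpha(t_0), t_0) = 1$. Set $\sigma(t_0) = \alpha(t_0) - t_0 \Lambda_j$. The spectral decomposition of $\widetilde{\mathcal L}_{s, t_0}$ then shows that $\xi_{t_0}(s)$ admits a meromorphic extension past the line $\{\Re(s) = \sigma(t_0)\}$ whose unique singularity on or to the right of this line is a simple pole at $s = \sigma(t_0)$ with positive residue $C_{t_0}$, depending analytically on $t_0$. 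The absence of other poles on the critical line follows by perturbation from Lemma~\ref{lem.spectrum}(ii), which rests on the non-arithmeticity Assumption~\ref{assump:nonarith}. Delange's Tauberian theorem (Proposition~\ref{prop.tau}) with $\delta = \sigma(t_0)$ and $n = 1$ then yields the asymptotic
\[
\sum_{w(x) < Q} e^{t_0 \varphi_j(x)} \;\sim\; C_{t_0} \, e^{\sigma(t_0) Q}\quad\text{as }Q \to \infty.
\]

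Strict convexity of $\sigma$ on $U$ follows from its real analyticity together with a computation of $\sigma''(0)$. Implicit differentiation of the defining eigenvalue equation, combined with the identity $\Lambda_j = -\widetilde\lambda_t(1, 0)/\widetilde\lambda_s(1, 0)$ established in the proof of Theorem~\ref{thm.firstmoment}, gives $\alpha'(0) = \Lambda_j$ and hence $\sigma'(0) = 0$. A second implicit differentiation, after passing to the normalised operator via $\lambda(s, t) = \widetilde\lambda(s - t\Lambda_j, t)$, identifies
\[
\sigma''(0) \;=\; -\,\frac{\lambda_{tt}(1, 0)}{\lambda_s(1, 0)},
\]
which is strictly positive by Lemma~\ref{lem.deriv} ($\lambda_s(1, 0) < 0$) and Proposition~\ref{prop.var} ($\lambda_{tt}(1, 0) > 0$). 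Shrinking $U$ if necessary so that $\sigma''$ remains positive throughout, we obtain strict convexity of $\sigma$ on $U$.

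The main obstacle is verifying the uniform meromorphic extension needed for Delange's theorem, and in particular ruling out competing poles on the critical line $\{\Re(s) = \sigma(t_0)\}$ uniformly for all $t_0 \in U$. For $t_0$ close to $0$ this reduces, by a continuity argument for the spectrum of $\widetilde{\mathcal L}_{s, t_0}$ under small perturbations of $t_0$, to the case $t_0 = 0$ handled by Lemma~\ref{lem.spectrum}(ii), whose proof depends crucially on Assumption~\ref{assump:nonarith}.
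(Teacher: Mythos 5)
Your approach is essentially the same as the paper's: twist the Dirichlet series, locate the dominant pole via the implicit function theorem applied to the eigenvalue equation, rule out other poles on the critical line via non-arithmeticity, compute $\sigma''(0)$, and apply Delange. The bookkeeping via the un-normalised operator $\widetilde{\mathcal L}$ and the substitution $\sigma(t_0) = \alpha(t_0) - t_0\Lambda_j$ is equivalent to the paper's direct use of the normalised operator $\mathcal L_{s,t}$ (where one solves $\lambda(\sigma(t),t) = 1$). You are in fact slightly more careful than the paper in one spot: for $\sigma''(0) > 0$ you correctly invoke Proposition \ref{prop.var}, whereas Lemma \ref{lem.deriv} alone only gives positive semi-definiteness.

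The one place where your proposal goes astray is the treatment of the ``no other poles on the line $\mathfrak{Re}(s) = \sigma(t_0)$'' step, which you twice describe as a ``perturbation'' or ``continuity'' argument reducing to the $t_0 = 0$ case of Lemma \ref{lem.spectrum}(ii). That framing does not actually work as a proof. A spectral continuity argument can be made to handle bounded imaginary parts $|\mathfrak{Im}(s)| \le R$, but the line is unbounded, and for a fixed perturbation size $|t_0|$ there is no uniform control of the distance from the spectrum of $\mathcal L_{\sigma(t_0)+i\xi, t_0}$ to the unit circle as $\xi \to \infty$; one cannot simply appeal to the $t_0 = 0$ statement and deform. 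What the paper actually does, and what is needed, is to run the \emph{same} cohomology argument directly at the shifted base point: for any $\xi \neq 0$, if $\mathcal L_{\sigma(t_0)+i\xi, t_0}$ had $1$ as an eigenvalue, the convexity argument of \cite[Chapter 4]{ParryPollicott} would force $w$ to be cohomologous to a function valued in a lattice $c\mathbb Z$, contradicting Assumption \ref{assump:nonarith}. This is not a perturbation of Lemma \ref{lem.spectrum}(ii) but a fresh application of its proof mechanism at $(\sigma(t_0), t_0)$ in place of $(1,0)$; the non-arithmeticity hypothesis does the work non-perturbatively and uniformly in $\xi$. You correctly flag this step as the main obstacle and correctly identify non-arithmeticity as the crucial input, so the gap is in the articulation rather than in the understanding, but as written the argument would not close.
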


\begin{proof}
We begin by applying the Implicit Function Theorem to find a real, open neighbourhood $U$ of $t=0$ and a real analytic function $\sigma : U \to \R$ such that $\lambda(\sigma(t),t) =1$ for all $t \in U$.

The Implicit Function Theorem also implies that
\[
\sigma'(0) =  - \frac{\lambda_t(1,0)}{\lambda_s(1,0)} = 0
\]
and 
\[
\sigma''(0)= - \frac{\lambda_{t,t}(1,0)}{\lambda_s(1,0)} > 0
\]
by Lemma \ref{lem.deriv}.
In particular,  $\sigma$ is strictly convex.
Then for any fixed $t_0\in U$ we can find $s_0 = \sigma(t_0)$ and a neighbourhood of $s_0$ such that
\begin{equation} 
\zeta(s,t_0) = \frac{ \lambda(s,t_0)Q(s,t_0)\textbf{1}(x_0)}{1-\lambda(s,t_0)} + R(s,t_0)
\end{equation}
in this neighbourhood (here $R(s,t_0)$ is analytic in $s$). In particular, $\zeta(s,t_0)$ has a simple pole $s = s_0$. Furthermore, the expression
\[
\zeta(s,t_0) = \mathcal{L}_{s,t}^\sharp \circ \sum_{n\ge 0} \mathcal{L}_{s,t_0}^n \1(x_0) = \sum_{x \in X} e^{-sw(x) + t_0 \varphi_j(x)}
\]
shows us that $\zeta(s,t_0)$ does not have any other poles in the line $\Re(s) = s_0$. To see this, note that the operator $\mathcal{L}_{s,t_0}$ has simple maximal real eigenvalue $1$ when $s = s_0$. Then as in the proof of Lemma \ref{lem.spectrum} we see that $\mathcal{L}_{s_0 + i\xi, t_0}$ can not have $1$ as an eigenvalue as this would imply that $w$ takes values in a lattice which contradicts Assumption \ref{assump:nonarith}.

We also see that
\[
\zeta(s,t_0) =  \frac{\lambda_s(\sigma(t_0),t_0)^{-1} \lambda(s,t_0)Q(s,t_0)\textbf{1}(x_0)}{\sigma(t) - s} + \widetilde{R}(s)
\]
 where $\widetilde{R}$ is analytic in a neighbourhood of $s_0$. Note that $\lambda_s(\sigma(t_0),t_0) > 0$ by the same argument as in previous lemma.
 To summarise we have shown that $\zeta(s,t_0) $ is analytic in a neighbourhood of $\mathfrak{Re}(s) \ge s_0$ apart from a simple pole with positive residue at $s = s_0$.
 
Then we see by Proposition \ref{prop.tau} that there exists a positive constant $C_{t_0}$ depending on $t_0$ such that
\[
\sum_{w(x) < Q} e^{t_0 \varphi_j(x)}  \sim C_{t_0} e^{\sigma(t_0) Q}
\]
as $Q\to\infty$. 
\end{proof}

We are now ready to prove our large deviation theorem.

\begin{proof}[Proof of Theorem \ref{thm.largedev}]
Combining the above results show that there is a neighbourhood  $U$ of $0$ and an analytic function $\sigma : U \to\R$  such that if $t \in U$ then 
\[
\lim_{n \to\infty} \frac{1}{n} \log \left( \frac{1}{\#\{x \in X : w(x) < n \} } \sum_{w(x) < n } e^{t \varphi_j(x)} \right) = \sigma(t) - \sigma(0).
\]
Here we are taking a discrete limit as $n$ runs through the integers (despite the fact the limit exists as a continuous limit in $Q$). We are doing this so that we can apply Lemma \ref{lem.localldp} (note that $\sigma''(0) > 0$) to deduce that
\[
\limsup_{n \to\infty} \frac{1}{n} \log \left( \frac{1}{\#\{x \in X : w(x) < n \}} \#\left\{ x\in X: w(x) < n, \, \left| \frac{N_j(x)}{n} - \Lambda_j \right| > \epsilon \right\}\right) < 0.
\]
Here the random variables are $Z_n(x) = \varphi(x)$ if $w(x) < n$ and $Z_n(x) = 0$ otherwise and the measures $\mu_n$ are the uniform counting measures on $\{x \in X: w(x) < n \}$.
To conclude the proof of the theorem we just need to note that 
\[
\#\left\{ x \in X : w(x) < Q \, \left| \frac{N(x)}{Q} - \Lambda_j \right| > \epsilon  \right\} \le \#\left\{ x \in X : w(x) < \lfloor Q \rfloor + 1 \, \left| \frac{N(x)}{\lfloor Q \rfloor + 1} - \Lambda_j \right| > \frac{\epsilon}{2}  \right\} 
\]
for all $Q$ sufficiently large. It is then easy to deduce the required exponential decay in the continuous limit $Q \to \infty$.
\end{proof}

\subsection{Central limit theorems}\label{subsec:CLT}

To prove our central limit theorem we follow method presented in Section 3.7 of \cite{CanPol2}. We include all details for the convenience of the reader.

We begin by defining the following function.
\begin{definition}
For each $s \in \C, t \in \C^d$ we formally define
\[
\eta(s,t) = \sum_{x \in X}  e^{ -s w(x) + \langle t, \overline{\varphi}(x) \rangle}.
\]
Furthermore, for each $\widehat{p} = (p_1, \ldots, p_d)  \in \N^d$, we can formally define
\[
\eta_{\widehat{p}}(s) = \sum_{x \in X} \varphi_{\widehat{p}}(x) e^{-sw(x)}
\]
where $\varphi_{\widehat{p}}(x) = \varphi_1^{p_1}(x) \cdots \varphi_d^{p_d}(x)$.
\end{definition}

Both of these series converge to analytic functions providing the real part of $s$ is sufficiently large and $t$ is bounded.

\medskip
\noindent
{\bf Notation}.  Given $\widehat{p} =(p_1,\ldots,p_d) \in \mathbb N^d$, we write $|\widehat{p}| = p_1 +\cdots+p_d$.
\medskip


The first pole of $\eta(s,0)$ will appear on the real axis at $\lambda > 0$. The following gives the required properties of the complex function $\eta_{\w{p}}(s)$.

\begin{proposition}\label{prop.analyticity}
Given $\w{p} \in \mathbb{N}^d$ the function $\eta_{\widehat{p}}(s)$ is analytic in the plane $\mathfrak{R}(s) > 1$. Furthermore, $\eta_{\w{p}}(s)$ is analytic on $\mathfrak{R}(s) \ge 1$ apart from at $s = 1$. The nature   of the singularity at  $1$ depends on the  parity of $p = |\w{p}|$ as follows:\\
\noindent {\rm Case 1: $p$ is odd.} Then $\eta_{\w{p}}(s)$ has possibly finitely many  integer order poles at $s =1$ and is analytic otherwise. These poles have order at most $(p+1)/2$.\\
\noindent {\rm Case 2: $p$ is even}. In this case, there exists a positive definite, symmetric matrix $ \Sigma = (\sigma_{i,j})_{i,j=1}^n$
such that the following holds. For $s$ in a neighbourhood of $1$, we can write
\[
\eta_{\w{p}}(s) = \frac{R_{\w{p}}(s)}{(s-1)^{1+ \frac{p}{2}}}
\]
where each $R_{\w{p}}(s)$ is analytic and
\[
R_{\w{p}}(1) = C\left(\frac{p}{2}\right)! \sum_{i_1, \ldots, i_{p} } \sigma_{l(i_1),l(i_2)} \sigma_{l(i_3),l(i_4)} \cdots \, \sigma_{l(i_{p-1}), l(i_{p})}
\]
where:
\begin{enumerate}
\item
the sum over $i_1, \ldots, i_{p}$ is over the partition the numbers $1,\ldots, p$ into disjoint pairs  $(i_1,i_2), \ldots, (i_{p-1},i_{p})$; and 
\item $l : \{1, \ldots, p \} \to \{1,2,\ldots,p\}$ sends the set $\{1,\ldots,p_1\}$ to $1$,  the set $\{p_1+1, \ldots, p_1+p_2\}$ to $2$ and continues in this way until finally sending $\{p_1+\cdots + p_{d-1} + 1, \ldots, p\}$ to $d$.
\end{enumerate}
\end{proposition}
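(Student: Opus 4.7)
The strategy is to realise $\eta_{\widehat{p}}(s)$ as a multivariable $t$-derivative of the two-variable generating function $\eta(s,t)$ at $t=0$, and then to extract its singular behaviour from the spectral data of the transfer operators $\mathcal{L}_{s,t}$ and $\mathcal{L}_{s,t}^\sharp$. Writing
\[
\eta(s,t) = \mathcal{L}_{s,t}^\sharp (I - \mathcal{L}_{s,t})^{-1}\mathbf{1}(x_0),
\]
the exponential-decay estimates already used in the proof of Theorem~\ref{thm.firstmoment} show that the series defining $\eta(s,t)$ converges absolutely for $\mathfrak{R}(s)>1$ and $t$ in a complex neighbourhood of $0$, whence $\eta_{\widehat{p}}(s) = \partial_{t_1}^{p_1}\cdots \partial_{t_d}^{p_d}\eta(s,t)\big|_{t=0}$ is analytic on $\{\mathfrak{R}(s)>1\}$. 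To extend analytically to the closed half plane $\{\mathfrak{R}(s)\ge 1\}\setminus\{1\}$, I would fix $s_0\ne 1$ with $\mathfrak{R}(s_0)\ge 1$ and invoke Lemma~\ref{lem.spectrum}(ii) together with analytic perturbation theory: $(I-\mathcal{L}_{s,t})^{-1}$ is bi-analytic in a bi-neighbourhood of $(s_0,0)$, so the same holds for $\eta_{\widehat{p}}$ after differentiating at $t=0$.

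Near $(1,0)$ I would use the dominant-eigenvalue decomposition from Lemma~\ref{lem.spectrum}(i),
\[
\eta(s,t) = \frac{\lambda(s,t)\,Q(s,t)\mathbf{1}(x_0)}{1-\lambda(s,t)} + R(s,t),
\]
where $R$ is bi-analytic and $\lambda(1,0)=1$. Leibniz's rule applied to $\partial^{\widehat{p}}_t\big|_{t=0}$ then shows that, since the numerator $\lambda(s,t)Q(s,t)\mathbf{1}(x_0)$ is bi-analytic, the entire pole structure at $s=1$ originates in derivatives of $1/(1-\lambda(s,t))$. The multivariable Fa\`a di Bruno formula gives
\[
\partial^{\widehat{p}}_t \frac{1}{1-\lambda(s,t)}\bigg|_{t=0} = \sum_{\pi} \frac{|\pi|!}{(1-\lambda(s,0))^{|\pi|+1}} \prod_{B\in\pi}\left.\partial^{B}\lambda(s,t)\right|_{t=0},
\]
where the sum runs over set partitions $\pi$ of the labelled set $\{1,\ldots,p\}$ and $\partial^B = \prod_{j\in B}\partial_{t_{l(j)}}$, with $l$ the labelling function from the statement.

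Since $1-\lambda(s,0)$ has a simple zero at $s=1$ by Lemma~\ref{lem.deriv} ($\lambda_s(1,0)<0$), and each singleton block contributes a factor $\partial_{t_i}\lambda(s,0)$ that vanishes to order one at $s=1$ (again by Lemma~\ref{lem.deriv}), a partition $\pi$ with $k$ singleton blocks produces a pole at $s=1$ of order $(|\pi|+1)-k$. Writing $|\pi|-k$ for the number of non-singleton blocks and noting that these contain the $p-k$ non-singleton labels in blocks of size at least two, one gets $|\pi|-k\le (p-k)/2$, so the pole order is at most $(p-k)/2 + 1 \le \lfloor p/2\rfloor + 1$. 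For $p$ odd this yields the stated bound $(p+1)/2$; for $p$ even the bound $p/2+1$ is attained only by pair partitions (all blocks of size $2$, so $k=0$, $|\pi|=p/2$), for which the leading singularity is
\[
\partial^{\widehat{p}}_t\frac{1}{1-\lambda(s,t)}\bigg|_{t=0} \sim \frac{(p/2)!}{(-\lambda_s(1,0))^{p/2+1}(s-1)^{p/2+1}}\sum_{\text{pairings}}\prod_{(i,k)}\lambda_{t_{l(i)},t_{l(k)}}(1,0).
\]
Substituting $\lambda_{t_i,t_k}(1,0) = -\lambda_s(1,0)\,\sigma_{i,k}$ from Lemma~\ref{lem.deriv} and multiplying through by $\lambda(1,0)Q(1,0)\mathbf{1}(x_0) = Q(1,0)\mathbf{1}(x_0)$ yields the stated formula for $R_{\widehat{p}}(1)$ with $C = Q(1,0)\mathbf{1}(x_0)/(-\lambda_s(1,0))>0$. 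The principal obstacle is the combinatorial bookkeeping: one must verify that the Leibniz contributions from differentiating the analytic numerator all reduce the effective order of $\widehat{p}$ and hence yield strictly lower-order poles, and that among Fa\`a di Bruno partitions only pair partitions achieve the maximal pole order in the even case, combining with the exact multiplicity $(p/2)!$ coming from $f^{(p/2)}(u) = (p/2)!/(1-u)^{p/2+1}$.
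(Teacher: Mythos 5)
Your proposal follows essentially the same route as the paper: the dominant-eigenvalue decomposition $\eta(s,t) = F(s,t)/(1-\lambda(s,t)) + R(s,t)$, Fa\`a di Bruno's formula for $\partial_t^{\widehat{p}}(1-\lambda(s,t))^{-1}$ at $t=0$, and the use of $\lambda_{t_i}(1,0)=0$ to suppress singleton blocks, with the maximal pole coming from pair partitions when $p$ is even and the substitution $\lambda_{t_i,t_k}(1,0) = -\lambda_s(1,0)\sigma_{i,k}$ yielding the same residue formula and constant $C$. Your bookkeeping of the pole-order bound via the number of singleton blocks is if anything slightly more explicit than the paper's (which simply says one may ignore partitions containing singletons when locating the highest-order pole), but the underlying argument and conclusion are identical.
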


\begin{proof}[Proof of Proposition \ref{prop.analyticity}]
We first show that $\eta_{\w{p}}$ is analytic at $\mathfrak{Re}(s) \ge 1$ other than $s=1$. To do so we notice that
\[
\eta(s,t) = \mathcal{L}_{s,t}^\sharp \circ \sum_{n\ge0} \mathcal{L}_{s,t}^n \textbf{1}(x_0).
\]

Using Lemma \ref{lem.spectrum}, we see that $\eta(s,t)$ has the right domain of analyticity and we can differentiate to get what we need. Studying the pole is the harder bit. For $(s,t)$ close to $(1,0)$ we can write 
\[
\mathcal{L}_{s,t}^n\textbf{1}(x_0) = \lambda(s,t)^n Q(s,t)\textbf{1}(x_0) + O(\theta^n)
\]
where $0 < \theta < 1$ is independent of $s,t$. It follows that
\[
\eta(s,t) = \frac{ \lambda(s,t)Q(s,t)\textbf{1}(x_0)}{1-\lambda(s,t)} + R(s,t)
\]
where $R(s,t)$ is multianalytic in a neighbourhood of $(1,0)$. Set $F(s,t) = \lambda(s,t)Q(s,t)\textbf{1}(x_0)$ and note that $F(1,0) > 0$.   We now  want to study the  partial derivatives
\[
\left. \frac{\partial^n}{\partial t_1^{n_1} \partial t_2^{n_2} \cdots \, \partial t_d^{n_d}} \right|_{(s,0)}  \frac{F(s,t)}{1-\lambda(s,t)}
\]
for $\w{n} = (n_1, n_2,\ldots,n_d) \in \N$ with $|\w{n}| = n_1 + n_2 + \cdots + n_d =n$.
Since we are only interested in the largest order poles, it suffices to study
\[
\left. \frac{\partial^n}{\partial t_1^{n_1} \partial t_2^{n_2} \cdots \, \partial t_d^{n_d}} \right|_{(s,0)}  \frac{1}{1-\lambda(s,t)}  \text{ where $n_1+n_2 + \cdots + n_d= n$.}
\]
Using Fa\`a di Bruno's formula we have that these derivatives are given by
\[
\sum_{\pi \in \Pi_n} \frac{|\pi|!}{(1-\lambda(s,0))^{|\pi|+1}} \prod_{B \in \pi} \lambda_B(s,0)
\]
where
\begin{enumerate}
\item $\Pi_n$ is the set of partitions of $\{1,\ldots,n\}$;
\item $|\pi|$ is the number of blocks  (i.e., cyles) in $\pi$;
\item in the product $B$ runs over the blocks in $\pi$; and,
\item $\lambda_B(s,0)$ is the partial derivative of $\lambda$ over the block $B$: if $B = \{ b_1,\ldots, b_k\}\subset \{ 1, \ldots, n\} $ then 
\[
\lambda_B(s,0,0) = \frac{\partial^n \lambda}{\partial t_1^{l_1} \partial t_2^{l_2} \cdots \, \partial t_d^{l_d}} (s,0)
\]
where $l_1 = \# (B \cap \{ 1,\ldots, n_1\}), l_2= \# (B \cap \{ n_1 + 1, \ldots, n_1+n_2\}), \ldots, l_d = \#(B \cap \{n_1 + \cdots + n_{d-1} +1, \ldots, n \}$.
\end{enumerate}
Now notice that by Lemma \ref{lem.deriv} for any block $B$ of length $1$, $\lambda_B(1,0) = 0$. Hence, the Fa\`a di Bruno formula shows that when we are searching for the poles of highest order we can ignore terms coming from partitions that contain blocks of a single number.

 It therefore follows that when $n$ is odd, the highest order pole coming from the derivative above has order at most $|\pi|+1= (n - 3)/2 + 1+1  = (n+1)/2$. 
 Indeed, one considers  a cycle of  order $3$ and $(n - 3)/2$ transpositions.

When $n$ is even the highest order poles come from partitions of $\{ 1,\ldots, n\}$ into pairs. We deduce that, in this case, the highest order poles are of the form
\[
\frac{(n/2)!}{(1-\lambda(s,0))^{n/2 + 1}} \sum_{\pi \in \Pi_n(2)} \prod_{B \in \pi} \lambda_B(s,0)
\]
where $\Pi_n(2)$ represents all partitions of $\{1,\ldots, n\}$ into blocks of size $2$. Using  Lemma \ref{lem.deriv} we can then write
\[
\frac{(n/2)!}{(1-\lambda(s,0))^{n/2}} \sum_{\pi \in \Pi_n(2)} \prod_{B \in \pi} \lambda_B(s,0) = \frac{(-\lambda_s(1,0))^{n/2}(n/2)!}{(1-\lambda(s,0))^{n/2 +1}} \sum_{i_1,\ldots, i_n} \sigma_{l(i_1),l(i_2)}\cdots \sigma_{l(i_{n-1}),l(t_{n})} + g(s)
\]
where $g(s)$ has poles of integer orders strictly less that $n/2 + 1$ and $l : \{1, \ldots, p \} \to \{1,2,\ldots,n\}$ sends the set $\{1,\ldots,n_1\}$ to $1$,  the set $\{n_1+1, \ldots, n_1+n_2\}$ to $2$ and continues in this way until finally sending $\{n_1+\cdots + n_{d-1} + 1, \ldots, n\}$ to $d$ (as in the statement of the proposition).

To conclude the proof we sum over $0 \le n_1 \le p_1, 0 \le n_2 \le p_2$. To see that when $|\w{p}|$ is odd, $\eta_{\w{p}}$ has the required pole structure and when $|\w{p}| = p$ is even
\[
\eta_{\w{p}}(s) =  \frac{F(s,0)(-\lambda_s(1,0))^{p/2}(p/2)!}{(1-\lambda(s,0))^{p/2 +1}} \sum_{i_1,\ldots, i_p} \sigma_{l(i_1),l(i_2)}\cdots \sigma_{l(i_{p-1}),l(t_{p})} + G(s)
\]
where $G$ is analytic other than integer order poles of order at most $p/2$. To conclude we note that
\[
\frac{F(1,0)(-\lambda_s(1,0))^{p/2}(p/2)!}{(1-\lambda(s,0))^{p/2 +1}} = \frac{F(1,0)(p/2)!}{(-\lambda_s(1,0)) (s-1)^{p/2 + 1}} + H(s)
\]
where $H(s)$ is analytic on $\mathfrak{R}(s) \ge 1$ except for finite integer order poles at $s=1$ of order at most $p/2$.
This concludes the proof with
\[
C = - \frac{F(1,0)}{\lambda_s(1,0)} > 0 \ \text{ and the } \ \sigma_{i,k}\ \text{ as defined above.}
\]
\end{proof}

\subsection{Deducing the Central Limit Theorem} \label{subsec:CLT2}
In this section we will employ the estimates on the complex function described in the previous section 
to deduce the central limit theorem.  The approach is to use the method of moments, following an approach inspired by Morris \cite{morris}.  

\begin{definition}
For each $d$-tuple $\w{p} =(p_1,\ldots,p_d) \in \N^d$, we define
\[
\pi_{\w{p}}(Q) = \sum_{w(x) < Q} \varphi_{\w{p}}(x) \ \text{ where } \varphi_{\w{p}}(x) = \varphi_1^{p_1}(x)\cdots\varphi_d^{p_d}(x).
\]
\end{definition}

We can now use Proposition \ref{prop.tau} in the proof of the following moment estimate.

\begin{proposition}\label{prop.even}
When $|\w{p}| = p$ is even, we have that
\[
\lim_{Q\to\infty} \frac{1}{\#\{x \in X: w(x) <Q\}} \sum_{w(x) <Q} \left(\frac{\varphi_{\w{p}}(x)}{\sqrt{Q}}\right)^p = \sum_{i_1, \ldots, i_{p} } \sigma_{\pi(i_1),\pi(i_2)} \sigma_{\pi(i_3),\pi(i_4)} \cdots \sigma_{\pi(i_{p-1}), \pi(i_{p})}.
\]
\end{proposition}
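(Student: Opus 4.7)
The plan is to transfer the singularity data for $\eta_{\w{p}}(s)$ supplied by Proposition \ref{prop.analyticity} into an asymptotic for the unnormalised moment $\pi_{\w{p}}(Q)$ via Delange's Tauberian theorem (Proposition \ref{prop.tau}), then divide by the count $\#\{x\in X:w(x)<Q\}\sim Ce^{Q}$ from Theorem \ref{thm.firstmoment} and observe that the two occurrences of the constant $C$ cancel, leaving precisely the Wick-type sum over pair partitions. Note that the normalisation in the statement should be read as $\varphi_{\w{p}}(x)/Q^{p/2}$, i.e.\ $\prod_i(\varphi_i(x)/\sqrt{Q})^{p_i}$, consistent with $\varphi_{\w{p}}$ having total polynomial degree $p$ in the $\varphi_i$'s.

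For $p=|\w{p}|$ even, Proposition \ref{prop.analyticity} gives that $\eta_{\w{p}}$ is analytic on $\{\mathfrak{Re}(s)\ge 1\}\setminus\{1\}$ with a pole at $s=1$ of order exactly $1+p/2$, whose leading Laurent coefficient equals
\[
R_{\w{p}}(1)=C\,(p/2)!\,S_{\w{p}},\qquad S_{\w{p}}:=\sum_{i_1,\ldots,i_p}\sigma_{l(i_1),l(i_2)}\cdots\sigma_{l(i_{p-1}),l(i_p)},
\]
where the sum ranges over pair partitions of $\{1,\ldots,p\}$ and $C=-F(1,0)/\lambda_s(1,0)>0$. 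Applying Proposition \ref{prop.tau} with $n=1+p/2$ and $\delta=1$ yields
\[
\pi_{\w{p}}(Q)\sim\frac{R_{\w{p}}(1)}{(p/2)!}\,Q^{p/2}\,e^{Q}=C\,S_{\w{p}}\,Q^{p/2}\,e^{Q}.
\]
Dividing by $\#\{x\in X:w(x)<Q\}\sim Ce^{Q}$, with the \emph{same} constant $C$ as identified in the proof of Theorem \ref{thm.firstmoment}, and by $Q^{p/2}$ yields the stated limit $S_{\w{p}}$.

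The main obstacle will be that Proposition \ref{prop.tau} requires the distribution function to be monotone non-decreasing, whereas $\varphi_{\w{p}}$ is signed since each centred quantity $\varphi_i=N_{j_i}-\Lambda_{j_i}w$ takes both signs. I would handle this by expanding
\[
\varphi_{\w{p}}(x)=\prod_{i=1}^{d}\bigl(N_{j_i}(x)-\Lambda_{j_i}w(x)\bigr)^{p_i}
\]
as a finite signed $\R$-linear combination of non-negative monomials $w(x)^{a}\prod_i N_{j_i}(x)^{b_i}$ with $a+\sum_i b_i=p$. The Dirichlet series attached to each such monomial is realised as an appropriate mixed partial derivative of $\eta(s,t)$ at $t=0$, with the factors of $w$ absorbed by further differentiations in $s$; it has non-negative coefficients, so Delange applies directly, and it inherits from Lemma \ref{lem.spectrum} analyticity on $\{\mathfrak{Re}(s)\ge 1\}\setminus\{1\}$ together with an integer-order pole at $s=1$ of order at most $1+p/2$. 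Summing the individual asymptotics with signs reassembles $\pi_{\w{p}}(Q)$; cancellations among lower-order singular contributions are harmless, while the top-order rate $Q^{p/2}e^{Q}$ with coefficient $CS_{\w{p}}$ is forced by Proposition \ref{prop.analyticity}, which already identifies the singular part of $\eta_{\w{p}}$ at $s=1$ with a non-zero residue (positive definiteness of $(\sigma_{i,k})$ guarantees $S_{\w{p}}\ne 0$).
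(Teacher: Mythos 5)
Your plan runs into a genuine difficulty at the step where you expand $\varphi_{\w{p}}(x)=\prod_i(N_{j_i}(x)-\Lambda_{j_i}w(x))^{p_i}$ into signed non-negative monomials $w(x)^a\prod_i N_{j_i}(x)^{b_i}$ and apply Delange to each. Your claim that each such monomial's Dirichlet series inherits ``an integer-order pole at $s=1$ of order at most $1+p/2$'' is false: for the \emph{uncentred} counting functions one has $\widetilde{\lambda}_{t_i}(1,0)=-\Lambda_{j_i}\widetilde{\lambda}_s(1,0)\neq 0$, so singleton blocks survive in the Fa\`a di Bruno expansion and a monomial of total degree $p$ gives a pole of order $p+1$, not $1+p/2$. (The reduction to order $1+p/2$ in Proposition~\ref{prop.analyticity} uses precisely the centring, $\lambda_{t_i}(1,0)=0$.) Applying Delange to each monomial therefore yields asymptotics $\sim c_m\,Q^{p}e^{Q}$, and since Proposition~\ref{prop.tau} supplies only the leading term with no control on the remainder, summing these with signs can produce nothing sharper than $o(Q^{p}e^{Q})$ — the $Q^{p/2}e^{Q}$ rate cannot be extracted this way. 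Your remark that ``cancellations among lower-order singular contributions are harmless'' has the situation backwards: it is the \emph{leading} $Q^{p}e^{Q}$ terms that must cancel, and Delange gives no access to what survives.

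The paper's proof avoids signed summation at the leading order. One first observes that $\varphi_{2\w{p}}=\varphi_{\w{p}}^{2}\ge 0$ is automatically non-negative, and introduces
\[
G_1(s)=\sum_{x}\bigl(\varphi_{2\w{p}}(x)+w(x)^{p}\bigr)e^{-sw(x)},\quad
G_2(s)=\sum_{x}\bigl(\varphi_{\w{p}}(x)+w(x)^{p/2}\bigr)^{2}e^{-sw(x)},\quad
G_3(s)=\sum_{x}w(x)^{p/2}\varphi_{\w{p}}(x)e^{-sw(x)},
\]
with $G_2=G_1+2G_3$ (and a sign adjustment when $p/2$ is odd). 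Both $G_1$ and $G_2$ have non-negative coefficients and poles of \emph{full} order $p+1$ at $s=1$; Delange applies to each, and their subtraction does \emph{not} cause the leading $Q^{p}e^{Q}$ coefficients to cancel — it leaves exactly $2R_{\w{p}}(1)\,p!/(p/2)!$, which is nonzero. This yields $\sum_{w(x)<Q}w(x)^{p/2}\varphi_{\w{p}}(x)\sim R_{\w{p}}(1)Q^{p}e^{Q}/(p/2)!$. The final step, which your outline omits entirely, converts the interior weight $w(x)^{p/2}$ into the exterior weight $Q^{p/2}$ via the Stieltjes identity
\[
\sum_{w(x)<Q}w(x)^{p/2}\varphi_{\w{p}}(x)=Q^{p/2}\pi_{\w{p}}(Q)-\tfrac{p}{2}\int_0^{Q}t^{p/2-1}\pi_{\w{p}}(t)\,dt,
\]
where the integral is controlled by the elementary bound $|\pi_{\w{p}}(t)|\le\sum_{B}\pi_{B(\w{p})}(t)$ with all $B(\w{p})$ having even entries so that Proposition~\ref{prop.tau} gives $\pi_{B(\w{p})}(t)=O(t^{p/2}e^{t})$. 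That makes the integral term $o(Q^{p}e^{Q})$ and forces $\pi_{\w{p}}(Q)\sim R_{\w{p}}(1)Q^{p/2}e^{Q}/(p/2)!$, after which dividing by $\#\{x:w(x)<Q\}\sim Ce^{Q}$ and using the formula for $R_{\w{p}}(1)$ gives the claimed Wick sum. You will need all three of these ingredients — the non-negative squares trick, the identity $G_2=G_1+2G_3$, and the Stieltjes deweighting — rather than the signed monomial expansion.
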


\begin{proof}
When all of the $p_i$ are even we can apply Proposition \ref{prop.tau} to immediately deduce the result.
When some of the $p_i$ are odd we have to work harder. There are a further $2$ sub-cases. \\

\noindent 
\textit{Sub-case $1$: $p/2$ is even.} When this is the case we define $G_1(s), G_2(s)$ and $G_3(s)$ by 
\begin{align*}
 \sum_{x \in X} \left(\varphi_{2\w{p}}(x) + w(x)^p\right) e^{-sw(x)},  \  \sum_{x \in X} \left(\varphi_{\w{p}}(x)+ w(x)^{p/2}\right)^2 e^{-sw(x)} \ \text{ and } \ \sum_{x \in X} w(x)^{p/2} \varphi_{\w{p}}(x) e^{-sw(x)}
\end{align*}
respectively.
We note that $G_2 = G_1 + 2G_3$.
Using Proposition \ref{prop.tau} in combination with Proposition \ref{prop.analyticity} we see that
\[
\frac{1}{\#\{x \in X: w(x) < Q\}}\sum_{w(x) < Q} \varphi_{2\w{p}}(x) + w(x)^p \sim \frac{Q^p\left( R_{2\w{p}}(1) + R_{0,0}(1)(p!/(p/2)!)\right)}{Cp!}
\]
as $Q\to\infty$. We also have that (since $p/2$ is even)
\[
\eta_{\w{p}}^{(p/2)}(s) = \sum_{x \in X} w(x)^{p/2} \varphi_{\w{p}}(x)e^{-sw(x)} = G_3(x).
\]
Then using that $G_2 = G_1 + 2G_3$ we see that
\[
G_2(s) = \frac{R_{2\w{p}}(1) + R_{0,0}(1)(p!/(p/2)!) + 2 R_{\w{p}}(1)(p!/(p/2)!)}{(s-\lambda)^{1+p}} + f(s)
\]
where $f(s)$ is analytic other than integer poles of order at most $p$. Therefore
\[
\frac{1}{\#\{x \in X: w(x) < Q\}} \sum_{w(x) <Q} \left(\varphi_{\w{p}}(x)+ w(x)^{p/2}\right)^2 
\]
grows asymptotically like
\[
\frac{R_{2\w{p}}(1) + R_{0,0}(1)(p!/(p/2)!) + 2 R_{p_1,p_2}(1)(p!/(p/2)!)}{C p!} \, Q^p.
\]
This implies that
\[
\frac{1}{\#\{x \in X: w(x) < Q\}}\sum_{w(x) < Q} w(x)^{p/2} \varphi_{w{p}}(x)\sim \frac{R_{\w{p}}(1) Q^{p}}{C(p/2)!} .
\]
We want the same expression but with $w(x)^{p/2}$ replaced with $Q^{p/2}$. We now remove this weighting term. Note that
\[
\sum_{w(x) < Q} w(x)^{p/2} \varphi_{\w{p}}(x) = \int_0^Q t^{p/2} \ d\pi_{\w{p}}(t) = Q^{p/2}\pi_{\w{p}}(Q) - \frac{p}{2} \int_0^Q t^{p/2 -1} \pi_{\w{p}}(t) \ dt.	
\]
Without loss of generality we can assume that $p_1,\ldots,p_{2k}$ are odd and $p_{2k}+1, \ldots, p_d$ are even for some $k \le \lfloor p/2 \rfloor$.

We now would like to use the elementary inequality for real numbers $x_1,\ldots,x_{2k}$,
\[
\sum_{B \subset \{1,\ldots,2k\}, |B| = k} x_B^2 \ge |x_1 \cdots x_{2k}|
\]
where for $ B =\{b_1,\ldots, b_k\}  \subset \{1,\ldots,2k\}$ we set $x_B = x_{b_1}\cdots x_{b_k}$.

Using this we see that
\[
|\pi_{\w{p}}(t)| \le \sum_{B \subset \{1,\ldots,2k\}, |B| = k} \pi_{B(\w{p})}(t) 
\]
where $B(\w{p}) \in \N^d$ is the vector with entries $p_i$ for $i \nin \{1,\ldots,2k\}$, $p_i + 1$ if $i \in B \cap \{1,\ldots,2k\}$ and $p_i-1$ if $i \in  \{1,\ldots,2k\} \backslash B$.
Since all of the entries in $B(\w{p})$ are even and $|B(\w{p})| = |\w{p}| = p$ we can apply Proposition \ref{prop.tau} to deduce that
\[
\sum_{B \subset \{1,\ldots,2k\}, |B| = k} \pi_{B(\w{p})}(t)  = O(t^{p/2}e^t)
\]
as $t\to\infty$. It therefore follows that
\[
\int_0^Q t^{p/2 -1} \pi_{p_1,p_2}(t) \ dt= o\left( Q^{p/2} e^Q\right)
\]
as $Q\to\infty$.
This implies the required asymptotic in this sub-case.\\

\noindent
\textit{Sub-case $2$: $p/2$ is odd.} In this case we define $H_1(s), H_2(s)$ and $H_3(s)$ by
\begin{align*}
\sum_{x \in X} \left(\varphi_{2\w{p}}(x)+ w(x)^p\right) e^{-sw(x)}, \ \sum_{x \in X} \left(\varphi_{\w{p}}(x) + w(x)^{p/2}\right)^2 e^{-sw(x)} \ \text{ and } \ \sum_{x \in X} -w(x)^{p/2} \varphi_{\w{p}}(x) e^{-sw(x)}
\end{align*}
respectively. Similarly to before $H_2 = H_1 - 2 H_3$. Following the same argument as before, we deduce that
\[
\frac{1}{\#\{x \in X: w(x) < Q\}}\sum_{w(x) <Q} w(x)^{p/2} \varphi_{\w{p}}(x)\sim \frac{R_{p_1,p_2}(1) Q^{p}}{C(p/2)!} .
\]
Again, we can use the Stiltjes integral as before to change this expression into the required asymptotic.
\end{proof}

We now handle the odd sum case.

\begin{proposition}\label{prop.odd}
When $p = |\w{p}|$ is odd,  we have that
\[
\lim_{Q\to\infty} \frac{1}{\#\{x \in X: w(x) <Q\}} \sum_{w(x) <Q} \left(\frac{\varphi_{\w{p}}(x)}{\sqrt{Q}}\right)^p = 0.
\]
\end{proposition}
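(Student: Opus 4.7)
The strategy is to adapt the sub-case analysis from the proof of Proposition \ref{prop.even}, now exploiting the improved pole structure of $\eta_{\widehat{p}}(s)$ for odd $p = |\widehat{p}|$ established in Proposition \ref{prop.analyticity}: the function has integer-order poles at $s=1$ of order at most $m := (p+1)/2$, which is strictly smaller than in the even-degree case and is the source of the required cancellation.

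First, Cauchy--Schwarz together with Proposition \ref{prop.even} applied to the even multi-index $2\widehat{p}$ gives the a priori bound
\[
|\pi_{\widehat{p}}(Q)|^2 \leq \#\{x \in X : w(x) < Q\} \cdot \pi_{2\widehat{p}}(Q) = O(Q^p e^{2Q}),
\]
where $\pi_{\widehat{p}}(Q) := \sum_{w(x) < Q} \varphi_{\widehat{p}}(x)$, and hence $|\pi_{\widehat{p}}(Q)| = O(Q^{p/2} e^Q)$. Next, as in sub-cases 1 and 2 of the proof of Proposition \ref{prop.even}, for a suitably chosen non-negative integer $k$ introduce the non-negative Laplace transforms
\[
V^\pm(s) := \sum_{x \in X} \bigl(\varphi_{\widehat{p}}(x) \pm w(x)^k\bigr)^2 e^{-sw(x)} = \eta_{2\widehat{p}}(s) \pm 2(-1)^k \eta_{\widehat{p}}^{(k)}(s) + \zeta^{(2k)}(s),
\]
whose three summands have poles at $s=1$ of orders $p+1$, at most $m+k$, and $2k+1$ respectively, by Proposition \ref{prop.analyticity} together with the simple pole structure of $\zeta$ from the proof of Theorem \ref{thm.firstmoment}.

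Applying Proposition \ref{prop.tau} to each $V^\pm$ yields their leading asymptotics, and the identity $V^+(Q) - V^-(Q) = 4 T_k(Q)$ where $T_k(Q) := \sum_{w(x) < Q} w(x)^k \varphi_{\widehat{p}}(x)$ transfers these into an estimate on the signed sum $T_k(Q)$. The Stieltjes integration by parts
\[
T_k(Q) = Q^k \pi_{\widehat{p}}(Q) - k \int_0^Q t^{k-1} \pi_{\widehat{p}}(t)\, dt,
\]
combined with the a priori bound to control the integral term, then converts this back into a bound on $\pi_{\widehat{p}}(Q)$. Iterating over several choices of $k$ and invoking the exact even-degree asymptotics of Proposition \ref{prop.even} for each even-degree companion arising in Cauchy--Schwarz yields $\pi_{\widehat{p}}(Q) = o(Q^{p/2} e^Q)$. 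Dividing by $\#\{x \in X : w(x) < Q\} \cdot Q^{p/2}$ and using $\#\{x \in X : w(x) < Q\} \sim C e^Q$ from Theorem \ref{thm.firstmoment} gives the required limit $0$.

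\textbf{Main obstacle.} Unlike the even case, where choosing $k = p/2$ makes the three pole orders $p+1$, $m+k$, $2k+1$ coincide and permits an exact extraction of the leading asymptotic of $T_{p/2}$ by a single Tauberian subtraction, here no integer value of $k$ produces such a coincidence. As a result, a single pass of the sub-case argument only yields $\pi_{\widehat{p}}(Q) = o(Q^{(p+1)/2} e^Q)$, which is weaker than the target by a factor of $Q^{1/2}$. Closing this half-power gap is the main technical difficulty and requires the careful iteration above, coupling the Stieltjes identity for several values of $k$ with the exact even-degree limits from Proposition \ref{prop.even}.
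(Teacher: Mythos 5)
Your proposal has a genuine gap, and in fact you have identified it yourself in your ``Main obstacle'' paragraph without then closing it. The crux of the odd-degree case is exactly the half-power mismatch you describe: with an integer weight $w(x)^k$, the three summands in $V^\pm$ carry poles of orders $p+1$, $\le (p+1)/2 + k$ and $2k+1$, and there is no integer $k$ making these coincide, so the subtraction scheme of Proposition~\ref{prop.even} cannot be run directly. Your claim that ``iterating over several choices of $k$'' closes the resulting $Q^{1/2}$ gap is not substantiated: each pass with integer $k$ produces bounds whose exponents are off by half-integers, and no finite combination of such passes produces the fractional exponent $p/2$ that the target asymptotic $o(Q^{p/2}e^Q)$ requires. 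The a priori Cauchy--Schwarz bound $\pi_{\w{p}}(Q)=O(Q^{p/2}e^Q)$ is correct but gives only $O$, not $o$, so it cannot do the work either.

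The paper's proof removes the obstruction by working directly with the \emph{fractional} weight $w(x)^{p/2}$: it defines $K_3(s)=\sum_{x\in X}w(x)^{p/2}\varphi_{\w{p}}(x)e^{-sw(x)}$ and expresses it as
\[
K_3(s) = \frac{(-1)^{(p+1)/2}}{\sqrt{\pi}}\int_0^\infty \frac{\eta_{\w{p}}^{((p+1)/2)}(s+t)}{\sqrt{t}}\,dt,
\]
using the identity $\int_0^\infty t^{-1/2}e^{-tx}\,dt=\sqrt{\pi}\,x^{-1/2}$. Since $\eta_{\w{p}}^{((p+1)/2)}$ has integer-order poles at $s=1$, convolving against $t^{-1/2}$ turns them into \emph{half-integer}-order poles of $K_3$. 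The Delange Tauberian theorem (Proposition~\ref{prop.tau}) is stated to cover non-integer exponents $\mu_j$ precisely so that these contributions can be extracted and shown to be of strictly lower order than the integer pole of order $p+1$ coming from $K_1$. This is the missing mechanism that converts your half-power gap from a difficulty into an exact cancellation. After that, the same Stieltjes integration-by-parts you propose (with the fractional exponent $p/2$ rather than an integer $k$) removes the weighting and finishes the proof. Without the fractional-weight/Laplace-kernel trick, there is no way within your scheme to produce the $Q^{p/2}$ scaling.
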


\begin{proof}
As before we define $K_1(s), K_2(s)$ and $K_3(s)$ by 
\begin{align*}
 \sum_{x \in X} \left(\varphi_{2\w{p}}(x) + w(x)^p\right) e^{-sw(x)}, \
 \sum_{x \in X} \left(\varphi_{\w{p}}(x)+ w(x)^{p/2}\right)^2 e^{-sw(x)}, \text{ and } \
 \sum_{x \in X} w(x)^{p/2} \varphi_{\w{p}}(x)(x) e^{-sw(x)}
\end{align*}
respectively
and note that $K_2 = K_1 + 2K_3$ and 
\[
K_3(s) = \eta_{2\w{p}}(s) - \eta_0^{(p)}(s) = \frac{g(s)}{(s-1)^{1+p}} + f(s)
\]
where $g(1) > 0$ and $f,g$ are analytic.
By the Tauberian theorem we deduce that
\[
\frac{1}{\#\{x\in X: w(x) < Q\}} \sum_{w(x)<Q} \varphi_{2\w{p}}(x)+ w(x)^p \sim \frac{g(1) Q^p}{p!}.
\]

We now calculate
\[
\eta_{\w{p}}^{\left( \frac{p+1}{2}\right)}(s) = (-1)^{\left(\frac{p+1}{2}\right)} \sum_{w(x) < Q} w(x)^{\left( \frac{p+1}{2}\right)} \varphi_{\w{p}}(x) e^{-sw(x)}.
\]
Now using the identity
\[
\int_0^\infty t^{-1/2} e^{-tx} \ dt = \sqrt{\pi} x^{-1/2}
\]
for $x > 0$ we see that
\[
K_3(s) = \frac{(-1)^{\frac{p+1}{2}}}{\sqrt{\pi}} \int_0^\infty \frac{\eta_{\w{p}}^{\left( \frac{p+1}{2}\right)}(s+t)}{\sqrt{t}} \ dt
\]
and hence $K_3$ is analytic except for a pole of order at most $p+1$ at $s=1$. Now, we can write
\[
\eta_{\w{p}}^{\left( \frac{p+1}{2}\right)}(s) = \sum_{i=1}^{p+1} \frac{a_i}{(s-1)^i} + h(s)
\]
where $a_i \in \C$, $h$ is analytic in $\mathfrak{R}(s) \ge 1$. Then using the identity
\[
\int_0^\infty \frac{1}{(s+t - 1)^i \sqrt{t}} \ dt = \frac{\pi (2i -2)!}{2^{2i-1}(i-1)!^2} \frac{1}{(s-1)^{k - \frac{1}{2}}}
\]
we deduce that
\[
K_3(x) = \sum_{i=0}^{p+1} \frac{c_i}{(s-1)^{i-\frac{1}{2}}} + l(s)
\]
where $c_i \in \C$ and $l$ is analytic in the half plane. Then using that $k_2 = K_1 + 2K_2$ we deduce that
\[
K_2(x) = \sum_{i=0}^{p+1} \frac{a_i}{(s-1)^i} + \frac{b_i}{(s-1)^{i-\frac{1}{2}}} + r(s)
\]
and $a_{p+1}(1) = g(1)$. The Tauberian theorem now implies that
\[
\frac{1}{\#\{x \in X: w(x) < Q\}} \sum_{w(x) < Q} \left( \varphi_{\w{p}}(x)+ w(x)^{p/2} \right)^2  \sim \frac{g(1) Q^p}{p}
\]
and so
\[
\frac{1}{\#\{x \in X: w(x) <Q\}} \sum_{w(x) <Q} w(x)^{p/2} \varphi_{\w{p}}(x)= o(Q^p).
\]

To conclude the proof we need to remove the weighting term $w(x)^{p/2}$. To do so we set
\[
\phi(t) = t^{-p/2} \ \text{ and } \ \wt{\pi}(t) = \sum_{w(x) < t} w(x)^{p/2} w(x)^{p/2} \varphi_{\w{p}}(x)
\]
for $t>0$ and note that by the above $\wt{\pi}(t)$ is $O(t^pe^t)$. It follows that
\[
\int_0^Q \wt{\pi}(t) \phi'(t) \ dt = O( Q \cdot T^p e^t Q^{-1-p/2}) = O(Q^{p/2} e^Q)
\]
as $Q\to\infty$.
However we also have that
\begin{align*}
\int_1^Q \wt{\pi}(t) \phi'(t) \ dt &= \int_1^Q\sum_{w(x) < Q} \varphi_{\w{p}}(x) w(x)^{p/2} \phi'(t) \ dt + O(1) \\
&= \sum_{w(x) < Q} \varphi_{\w{p}}(x) w(x)^{p/2} \int_{w(x)}^Q \phi'(t)  dt\\
 &= Q^{-p/2} \wt{\pi}(Q) - \pi_{\w{p}}(Q) + O(1)
\end{align*}
as $Q\to\infty$. Rearranging this and using our estimates above gives the required result.
\end{proof}

We are now ready to conclude the proofs of the results in the introduction.
\begin{proof}[proof of Theorem \ref{thm.mdclt:intro} and \ref{thm.largedev}]
We conclude using the method of moments. Indeed, the limits obtained in Proposition \ref{prop.even} and Proposition \ref{prop.odd} are the limits of the sequences of moments for the distributions considered in our main theorems. We have shown that these sequences converge to the moments of the Gaussian limit law with mean $0$ and co-variances given by the $\sigma_{i,k}$. Along with  Proposition \ref{prop.var} and Proposition \ref{prop.multivar} which characterise non-degeneracy, this concludes the proof using the method of moments.
\end{proof}


\subsection*{Open access statement}
For the purpose of open access, the authors have applied a Creative Commons Attribution (CC BY) licence to any Author Accepted Manuscript version arising from this submission.

\bibliographystyle{alpha}
\bibliography{CLT}

\end{document}